\newfont{\fnt}{cmsy10}
\newfont{\sss}{cmti10}
\theoremstyle{definition}
\newtheorem{df}{Definition}[section]
\theoremstyle{plain}
\newtheorem{thm}{Theorem}[section]
\newtheorem{lemma}{Lemma}[section]
\newtheorem{prp}{Proposition}[section]
\theoremstyle{definition}
\newtheorem{rmk}{Remark}[section]
\begin{document}
\author{Barbora Volná}
\address{Mathematical Institute, Silesian University in Opava, \newline \indent Na Rybníčku 1, 746 01 Opava, Czech Republic}
\email{Barbora.Volna@math.slu.cz}
\keywords{Euler Equation Branching, Chaos, IS-LM/QY-ML Model, Economic Cycle}
\subjclass[2010]{37N40, 91B50, 91B55}

\title{Existence of Chaos in Plane $\mathbb{R}^2$ and its Application in Macroeconomics}

\begin{abstract}
The Devaney, Li-Yorke and distributional chaos in plane $\mathbb{R}^2$ can occur in the continuous dynamical system generated by Euler equation branching. Euler equation branching is a type of differential inclusion $\dot x \in \{f(x),g(x) \} $, where $f,g:X \subset \mathbb{R}^n \rightarrow \mathbb{R}^n$ are continuous and $f(x) \neq g(x)$ in every point $x \in X$. Stockman and Raines in \cite{stockman_raines} defined so-called chaotic set in plane $\mathbb{R}^2$ which existence leads to an existence of Devaney, Li-Yorke and distributional chaos. In this paper, we follow up on \cite{stockman_raines} and we show that chaos in plane $\mathbb{R}^2$ with two "classical" (with non-zero determinant of Jacobi's matrix) hyperbolic singular points of both branches not lying in the same point in $\mathbb{R}^2$ is always admitted. But the chaos existence is caused also by set of solutions of Euler equation branching which have to fulfil conditions following from the definition of so-called chaotic set. So, we research this set of solutions. In the second part we create new overall macroeconomic equilibrium model called IS-LM/QY-ML. The construction of this model follows from the fundamental macroeconomic equilibrium model called IS-LM but we include every important economic phenomena like inflation effect, endogenous money supply, economic cycle etc. in contrast with the original IS-LM model. We research the dynamical behaviour of this new IS-LM/QY-ML model and show when a chaos exists with relevant economic interpretation.
\end{abstract}

\maketitle

\section*{Introduction}

In this paper we focus on the research of chaos existence in plane $\mathbb{R}^2$. Two dimensional systems are very often in economics, thus the chaos description in plane $\mathbb{R}^2$ is very useful and applicable in economics. We follow up on the work of Stockman and Raines in \cite{stockman_raines}. The core of Devaney, Li-Yorke and distributional chaos existence in plane $\mathbb{R}^2$ is based on the special type of differential inclusion called Euler equation branching and on the continuous dynamical system generated by this differential inclusion. The continuous dynamical systems and chaos is also researched in e.g. \cite{wiggins} and the differential inclusions in e.g. \cite{smirnov}. Euler equation branching consists of two branches. The one separate branch is classical two-dimensional system of differential equations. The set of solutions of Euler equation branching contains the solutions only of separate branches and also switching solutions between these two branches. Every branch can produce some singular points and the combination of hyperbolic singular points not lying in the same point of these two branches can provide so-called chaotic set. The definition of chaotic set is provided in \nolinebreak \cite{stockman_raines}. Stockman and Raines in \cite{stockman_raines} also proved that existence of the chaotic set leads to an existence of Devaney, Li-Yorke and distributional chaos. We provide a comprehensive overview of every possible combinations of hyperbolic singular points of both branches not lying in the same point and we show that so-called chaotic set is there always admitted. We need to have also a set of switching solutions of Euler equation branching which fulfil conditions of the definition of chaotic set, i.e. which are Devaney, Li-Yorke and distributional chaotic, for certainty of the existence of so-called chaotic set in plane \nolinebreak $\mathbb{R}^2$, i.e. of Devaney, Li-Yorke and distributional chaos in the dynamical system generated by Euler equation branching in plane $\mathbb{R}^2$. We prove that such chaotic set of solutions exists and that the set of such chaotic sets of solutions are uncountable. We also add some lemmas and remarks to complete the theoretical part of this paper focusing on the existence of chaos in plane \nolinebreak $\mathbb{R}^2$. 

The economic situation in these days, the phenomena, where the "classical" (macro-) economic models or the prediction of the future economic progress according these models fail, inspire me to create the new overall macroeconomic model called IS-LM/QY-ML. This new macroeconomic model describes macroeconomic situation including every important economic phenomena like an aggregate macroeconomic equilibrium or (un)stability, an inflation effect, an endogenous money supply, an economic cycle etc. in one overall model. As we can see in this paper, from the perspective of this new model the dynamical behaviour of economy can be very chaotic and unexpected, the aggregate macroeconomic stability can be very frail and sensitive with respect to external influences. New IS-LM/QY-ML model is based on the fundamental macroeconomic IS-LM model. This model explains the aggregate macroeconomic equilibrium, i.e. the goods market equilibrium and the money market or financial assets market equilibrium simultaneously. Already in 1937, J. R. Hicks in \cite{hicks} published the original IS-LM model as a mathematical expression  of J. M. Keynes's theory. After formulation of the original IS-LM model during many decades many versions of this model and related problems were presented in several works, see e.g. \cite{cesare_sportelli}, \cite{chiba_leong}, \cite{gandolfo}, \cite{king}, \cite{neri_venturi}, \cite{romer} and \cite{zhou-li}. The original IS-LM model has several deficiencies, some of subsequent versions deal with modification of this original model but for us requirements we found our way to research this problem and to eliminate the deficiencies of original model. Primarily, Hicks built his model on one concrete economic situation, i.e. the original IS-LM model describes the economy in a recessionary gap. From this follows his assumptions of constant price level and of demand-oriented model. But our overall model describes all phases of the economic cycle and the properties connected with this. So, we firstly include inflation effect to our new model. For do this we inspire by one of the version of original IS-LM model, by IS-ALM model with expectations and the term structure of interest rates, see \cite{baily_friedman}. Then, we consider also a supply-oriented view of the macroeconomic situation using QY-ML model newly constructed in this paper. The QY-ML model describes simultaneous goods market equilibrium and money market equilibrium under supply-oriented point of view in contrast with IS-LM model. Thus, our new overall IS-LM/QY-ML model consists of two "sub-models": demand-oriented "sub-model" - modified IS-LM and supply-oriented "sub-model" - new QY-ML model. Depending on the phase of economic cycle the one of these sub-models holds. The switching between these phases is represented by switching between these two sub-models. The mathematical tool to describe the holding of two "sub-models" and switching between them is exactly Euler equation branching and the continuous dynamical system generated by this differential inclusion. Secondly, Hicks and also economists of that time assumed a strictly exogenous money supply. This supply of money is certain constant money stock determined by central bank. For this conception IS-LM model was the most criticised. The opposite conception is an endogenous money supply which assumes money generated in economy by credit creation, see e.g. \cite{sojka}. But even today's economists can not find any consensus in the problem of endogenity or exogenity of money supply, see e.g. \cite{badarudin_ariff_khalid} or \cite{sedlacek}. We resolve this dilemma by conjunction of the endogenous and exogenous conception of money supply including some money supply function to this model. 

The dynamical behaviour of new macroeconomic IS-LM/QY-ML model usually leads to hyperbolic singular points of both branches lying in the different point in $\mathbb{R}^2$. So, there possibly exists area of chaotic behaviour of the economy. Furthermore, if some economic cycle with in this article described types or similar types of periods influences economy, then the economy behaves chaotically in the area of $\mathbb{R}^2$ given by levels of the aggregate income and of the long-term real interest rate. Besides this also another authors deal with some type of chaos or bifurcations in economics, see e.g. \cite{puu} and \cite{zhang}. In this paper we examine the most typical case of economy describing by IS-LM/QY-ML model and show existence of chaos in such case with relevant economic interpretation of causes. But for less typical cases represented by unusual behaviour of economic subjects this chaos existence possibility is similar. 

Summary, this paper consists of two part - the theoretical research about chaos in plane $\mathbb{R}^2$ and its application in macroeconomics. In the first theoretical part there is the description of chaos existence in plane $\mathbb{R}^2$ which is given by continuous dynamical system generated by special type of differential inclusion called Euler equation branching. In the second application part the new overall macroeconomic equilibrium IS-LM/QY-ML model is constructed and the dynamical behaviour of this model can produce chaos in economy.

\section{Preliminaries}
\label{preliminaries}

All used definitions and theorems in this section follow from \cite{stockman_raines} and are modified to the special type of differential inclusion in plane $\mathbb{R}^2$.

\begin{df}
\label{df:Euler_equation_branching}
Let $X \subset \mathbb{R}^2$ be open set and $f,g:X \rightarrow \mathbb{R}^2$ be continuous. Let us consider differential inclusion given by $\dot x \in \{f(x),g(x) \} $. We say that there is \textit{Euler equation branching in the point} $x \in X$ if $f(x) \neq g(x)$. If there is Euler equation branching in every point $x \in X$ than we say that there is \textit{Euler equation branching on the set} $X$.
\end{df} 
\begin{rmk}
The solution of the differential inclusion of Euler equation branching type is function $x(t)$ which is the continuous and continuously differentiable a.e. and satisfies $\dot x \in \{f(x),g(x) \}$. The set of solutions includes the solution of the one branch satisfying $\dot x = f(x)$, the solution of the second branch satisfying $\dot x = g(x)$ and the "switching" between these two branches.
\end{rmk}
In the text below we consider $X \subseteq \mathbb{R}^2$ is non-empty open set with Euclidean metric $d$ and $T:=[0, \infty)$ is time index. Let $F: X \rightarrow 2^{\mathbb{R}^2}$ be set-valued function given by $F(x):=\{ f(x),g(x) \}$ where $f,g:X \rightarrow \mathbb{R}^2$ are continuous and $f(x) \neq g(x)$ is satisfied for all $x \in X$. $Z=\{ \gamma| \gamma: T \rightarrow X \}$, where functions $\gamma: T \rightarrow X$ are continuous and continuously differentiable a.e.
\begin{df}
The \textit{dynamical system generated by} $F$ is given by
$$D:=\{\gamma \in Z | \dot \gamma(t) \in F(\gamma(t)) \textit{ a.e.} \}$$
\end{df}
\begin{df}
We say that $V \subset \mathbb{R}^2$ non-empty is \textit{compact $F$-invariant set}, if $V$ is compact and for each $x \in V$ there exist a $\gamma \in D$ such that $\gamma(0)=x$ and $\gamma(t) \in V$ for all $t \in T$. 
\end{df}
Let $V^*=\{ \gamma \in D | \gamma(t) \in V, \textit{ for all }t \in T \}$ where $V \subset \mathbb{R}^2$ is compact  $F$-invariant set.
\begin{df}
Let $a,b \in X \subseteq \mathbb{R}^2$ and $D$ is a dynamical system with sense mentioned above. Let $\gamma \in D$, $t_0,t_1 \in T$ such that $t_0 < t_1$. A \textit{simple path from $a$ to $b$ generated by $D$} is given by $P:=\{ \gamma(t):t_0 \leq t \leq t_1 \}$ such that $\gamma(t_0)=a$, $\gamma(t_1)=b$ and $\dot \gamma$ has finitely many discontinuities on $[t_0,t_1]$ and $a \neq \gamma(s) \neq b$ for all $t_0<s<t_1$.
\end{df}
\begin{df}
\label{df:chaotic_set}
Let $V \subset X \subseteq \mathbb{R}^2$ be a non-empty compact $F$-invariant set and $V^*=\{ \gamma \in D | \gamma(t) \in V, \textit{ for all }t \in T \}$. $V$ is so-called \textit{chaotic set} provided
\begin{enumerate}
\item \label{df:chaotic_set_a} for all $a,b \in V$, there exists a simple path from $a$ to $b$ generated by $V^*$,
\item \label{df:chaotic_set_b} there exists $U \subset V$ non-empty and open (relative to $V$) and $\gamma \in V^*$ such that $\gamma(t) \in V \setminus U$ for all $t \in T$ (i.e. there exists $\gamma \in V^*$ such that $\{ \gamma(t):t \in T \}$ is not dense in $V$).
\end{enumerate}
\end{df}
\begin{thm}
\label{thm:chaotic_set-Devaney_chaos}
If $V$ is chaotic set then $V^*$ has Devaney chaos.
\end{thm}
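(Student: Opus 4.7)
The plan is to verify the three defining properties of Devaney chaos for $V^{*}$, viewed as a semidynamical system under the time-shift $\sigma_{t}\gamma(s):=\gamma(s+t)$, with $V^{*}$ equipped with the topology of uniform convergence on compact subsets of $T$: namely topological transitivity, density of periodic orbits, and sensitive dependence on initial conditions.

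First I would establish topological transitivity by constructing a single trajectory $\gamma^{*}\in V^{*}$ whose shift-orbit is dense. Since $V$ is separable, $V^{*}$ is separable in the chosen topology, so I can enumerate a dense family $\{\gamma_{n}\}\subset V^{*}$ and fix $T_{n}\uparrow\infty$, $\varepsilon_{n}\downarrow 0$. Then I would build $\gamma^{*}$ inductively by laying out successive time-blocks of length $T_{n}$ on which $\gamma^{*}$ coincides with a time-shift of $\gamma_{n}$, connecting consecutive blocks by a simple path from the current block's endpoint to the next block's starting point, which is guaranteed to exist by condition (1) of Definition \ref{df:chaotic_set}. Each inserted simple path lies in $V$ and satisfies $\dot\gamma\in F(\gamma)$ almost everywhere, so the concatenation still lies in $D$, takes values in $V$, and hence belongs to $V^{*}$; by construction $\sigma_{\tau_{n}}\gamma^{*}$ approximates $\gamma_{n}$ on $[0,T_{n}]$ within $\varepsilon_{n}$.

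For density of periodic orbits, given any $\gamma\in V^{*}$ together with $T>0$ and $\varepsilon>0$, I apply condition (1) of Definition \ref{df:chaotic_set} with $a:=\gamma(T)$ and $b:=\gamma(0)$ to obtain a simple path from $\gamma(T)$ back to $\gamma(0)$ of some finite length $\tau$. Then I define $\gamma_{p}$ to equal $\gamma$ on $[0,T]$, to equal this simple path on $[T,T+\tau]$, and extend periodically with period $T+\tau$; the result is a periodic element of $V^{*}$ that actually coincides with $\gamma$ on $[0,T]$ and so lies within $\varepsilon$ of $\gamma$ in the compact-open topology.

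Sensitive dependence then follows from the classical implication that, on any infinite metric space, topological transitivity together with density of periodic orbits yields sensitivity. The infiniteness of $V^{*}$ is provided by condition (2) of Definition \ref{df:chaotic_set}: the non-dense trajectory it supplies differs as a shift-orbit from the dense $\gamma^{*}$ constructed above, and iteration of the concatenation procedure in fact produces uncountably many distinct shift-orbits. The main obstacle I anticipate is the careful bookkeeping for the concatenations: at every gluing junction one must verify that the resulting map is continuous, continuously differentiable almost everywhere, has only finitely many discontinuities of $\dot\gamma$ on each bounded interval, and satisfies $\dot\gamma\in F(\gamma)$ almost everywhere, so that the constructed functions genuinely belong to $D$ and hence to $V^{*}$.
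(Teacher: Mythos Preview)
This theorem is not proved in the paper. It is stated in Section~\ref{preliminaries} under the explicit disclaimer that ``All used definitions and theorems in this section follow from~\cite{stockman_raines},'' and no argument is reproduced. There is therefore nothing in the paper to compare your attempt against.

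That said, your outline is the standard route and is essentially what the cited source does: concatenation along the simple paths supplied by condition~(\ref{df:chaotic_set_a}) yields both a transitive point and dense periodic orbits in $V^{*}$, and sensitivity then follows. One caution about the last step: the Banks--Brooks--Cairns--Davis--Stacey implication you invoke is formulated for maps; for flows it fails on the trivial example of a single periodic orbit, which is transitive and has every point periodic yet is clearly not sensitive. The real role of condition~(\ref{df:chaotic_set_b}) is to exclude exactly this degeneracy: if $V^{*}$ reduced to the shift-orbit of a single periodic trajectory, its image would have to be all of $V$ by $F$-invariance, and then every $\gamma\in V^{*}$ would have dense range in $V$, contradicting~(\ref{df:chaotic_set_b}). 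Your phrasing that condition~(\ref{df:chaotic_set_b}) merely ``provides infiniteness'' understates this, though your subsequent remark that the non-dense trajectory differs as a shift-orbit from the dense $\gamma^{*}$ is closer to the actual point. With that correction, and with the junction bookkeeping you already flag, the argument goes through.
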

\begin{thm}
\label{thm:chaotic_set_non_empty_interior-Li-Yorke_distributional_chaos}
If $V$ is chaotic set with non-empty interior then $V^*$ has Li-Yorke chaos and distributional chaos.
\end{thm}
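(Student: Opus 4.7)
The plan is to combine the Devaney chaos already guaranteed by Theorem~\ref{thm:chaotic_set-Devaney_chaos} with the extra freedom provided by the non-empty interior of $V$ in order to construct an uncountable Li-Yorke (resp.\ distributionally) scrambled set inside $V^*$. First I would fix an open ball $B\subset\mathrm{int}(V)$, choose two well separated points $p,q\in B$, and pick disjoint open balls $B_p,B_q\subset B$ centred at them. The positive number $\delta:=d(p,q)-\mathrm{diam}(B_p\cup B_q)$ will serve as a uniform ``Li-Yorke constant'' keeping any two coded orbits apart.

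Using clause~\ref{df:chaotic_set_a} of Definition~\ref{df:chaotic_set}, for every pair of points in $V$ there is a simple path in $V^*$ joining them, and by compactness of $V$ the times along such paths can be bounded uniformly when the endpoints are restricted to $\overline{B_p}$ and $\overline{B_q}$. I would then concatenate such paths into a single trajectory $\gamma_\xi\in V^*$ coded by a binary sequence $\xi\in\{0,1\}^{\mathbb{N}}$: on a uniformly bounded consecutive interval $I_n=[t_n,t_{n+1}]$ the trajectory is forced to satisfy $\gamma_\xi(t_n)\in B_p$ when $\xi_n=0$ and $\gamma_\xi(t_n)\in B_q$ when $\xi_n=1$. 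The initial values $\gamma_\xi(0)$ yield an uncountable subset of $V$, and for any two codes whose coincidence sets have appropriate densities one obtains
\[
\liminf_{t\to\infty}d(\gamma_\xi(t),\gamma_\eta(t))=0,\qquad \limsup_{t\to\infty}d(\gamma_\xi(t),\gamma_\eta(t))\geq \delta,
\]
which is the Li-Yorke scrambled condition.

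For distributional chaos I would refine the coding: instead of arbitrary $\{0,1\}$-sequences, I select, by a classical diagonal construction, an uncountable family $\{\xi_\alpha\}_\alpha\subset\{0,1\}^{\mathbb{N}}$ whose pairwise agreement patterns \emph{and} pairwise disagreement patterns both have upper density one. The corresponding orbits then spend asymptotic time fraction one at mutual distance less than any prescribed $\varepsilon>0$ and asymptotic time fraction one at mutual distance at least $\delta$, which is exactly the defining property of a distributionally scrambled set. The non-empty interior hypothesis is used crucially here to ensure that the two disjoint balls $B_p,B_q$ actually fit inside $V$, so that the routing never leaves the $F$-invariant region.

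The main obstacle I anticipate is the quantitative control of the switching trajectories: clause~\ref{df:chaotic_set_a} of Definition~\ref{df:chaotic_set} only asserts the existence of connecting simple paths, while the density statements underlying Li-Yorke and distributional chaos require a uniform upper bound on the time length of each excursion, together with verifications that the concatenated solution stays in $V$, is continuous with only finitely many discontinuities of $\dot\gamma_\xi$ on every bounded interval, and that distinct codes indeed produce distinct initial points. Once this bookkeeping is completed, the chaotic conclusions follow from the standard symbolic-dynamics translation already implicit in the corresponding result of \cite{stockman_raines}.
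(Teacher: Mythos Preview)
The paper does not actually prove this theorem: Section~\ref{preliminaries} explicitly states that all definitions and theorems there are taken from \cite{stockman_raines}, so there is no in-paper proof to compare against. Your sketch is along the lines of the symbolic-coding argument one would expect in that reference, but as written it has a genuine gap.

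The issue is the proximal condition. Your coded trajectories $\gamma_\xi$ are only required to hit $B_p$ or $B_q$ at the marker times $t_n$, so when $\xi_n=\eta_n$ you can conclude $d(\gamma_\xi(t_n),\gamma_\eta(t_n))\le \mathrm{diam}(B_p)$ (or $\mathrm{diam}(B_q)$), but not that this distance is arbitrarily small. Consequently your construction yields $\liminf_{t\to\infty} d(\gamma_\xi(t),\gamma_\eta(t))\le \mathrm{diam}(B_p\cup B_q)$ rather than $=0$, and the upper distributional function $F^*_{\gamma_\xi,\gamma_\eta}(\varepsilon)$ need not equal $1$ for $\varepsilon$ smaller than the ball diameters. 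To repair this you must force the two trajectories to follow an \emph{identical} arc over longer and longer time intervals (e.g.\ route both through a common waypoint and let them run along the same simple path for a block of length growing with $n$ before the next separation), so that $d(\gamma_\xi(t),\gamma_\eta(t))=0$ on sets of increasing measure. This is also what makes the density bookkeeping for distributional chaos work for every $\varepsilon>0$, not just for $\varepsilon\ge\delta$.

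A secondary point: the sentence ``by compactness of $V$ the times along such paths can be bounded uniformly'' is not justified by clause~\ref{df:chaotic_set_a} alone, since that clause gives existence of a simple path but no continuity of the path (or its time length) in the endpoints. You correctly flag this as the main obstacle; in practice one fixes a \emph{finite} collection of simple paths (e.g.\ $p\to q$, $q\to p$, $p\to p$, $q\to q$) and builds every $\gamma_\xi$ by concatenating only these, which gives the uniform time bound for free and also makes the ``identical arc'' refinement above straightforward. Finally, the claim that distinct codes yield distinct initial points is both unproven and unnecessary: the scrambled set lives in $V^*$, so it suffices that distinct codes yield distinct trajectories.
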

\begin{thm}
\label{thm:chaotic_set_empty_interior-Li-Yorke_distributional_chaos}
If $V$ is chaotic set homeomorphic to $[0,1]$ with $\|f(x)\| \|g(x)\|>0$ and $\theta:=cos^{-1}\left( \frac{f(x) \cdot g(x)}{\|f(x)\| \|g(x)\|} \right)=\pi$ for all $x \in V$, then $V^*$ has Li-Yorke and distributional chaos.
\end{thm}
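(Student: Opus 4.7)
The plan is to reduce the problem to a one-dimensional switching flow on $[0,1]$ and then construct an uncountable distributionally scrambled set of solutions directly.

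First I would transport the dynamics to the interval via a homeomorphism $h:[0,1]\to V$. Because $V$ is a topological arc and any $\gamma\in V^*$ stays in $V$, the velocities $f(\gamma)$ and $g(\gamma)$ must be tangent to $V$ at each density point of differentiability. The hypothesis that $f(x)$ and $g(x)$ are anti-parallel with positive length then forces, by connectedness and continuity of $V$, a global orientation: after a possible relabelling, $f\circ h$ points in the direction of increasing parameter with speed $\alpha(s)>0$, and $g\circ h$ in the direction of decreasing parameter with speed $\beta(s)>0$. Setting $\phi:=h^{-1}\circ\gamma$, the inclusion becomes $\dot\phi(t)\in\{\alpha(\phi(t)),-\beta(\phi(t))\}$, so elements of $V^*$ correspond to piecewise monotone admissible selections in $[0,1]$ that never leave the interval. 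An arc-length reparametrization of $V$ makes sense of the speeds when $h$ is not $C^1$, which is legitimate because $\|f\|\|g\|>0$ on the compact set $V$.

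Second, I would harvest switching flexibility from the chaotic-set conditions. Condition (1) of the definition of chaotic set implies that any two points of $V$ can be linked by an admissible selection with finitely many switches, which gives uniform positive lower bounds on $\alpha,\beta$ on each compact sub-interval of $(0,1)$. Using this flexibility, for each binary sequence $\omega\in\{0,1\}^{\mathbb N}$ I would build an explicit solution $\gamma_\omega\in V^*$ on a rapidly increasing schedule $t_n\uparrow\infty$ with the property that on the $n$-th block $[t_n,t_{n+1}]$ the trajectory sits within distance $1/n$ of a reference point $p_{\omega_n}\in V$ for most of the block and crosses a fixed common point $q\in V$ (with $q\neq p_0,p_1$) exactly once. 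Since the reduced $1$D system admits switching at arbitrary moments, such $\gamma_\omega$ indeed lie in $V^*$.

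Finally I would verify distributional and Li--Yorke scrambling on the family $\{\gamma_\omega\}$. For $\omega\neq\omega'$ the orbits share the passage through $q$ infinitely often, forcing $\liminf_{t\to\infty} d(\gamma_\omega(t),\gamma_{\omega'}(t))=0$; on the infinitely many blocks where $\omega_n\neq\omega_n'$ they lie near opposite reference points, forcing $\limsup_{t\to\infty} d(\gamma_\omega(t),\gamma_{\omega'}(t))\geq d(p_0,p_1)>0$. By tuning the block-lengths $t_{n+1}-t_n$ to grow fast enough, I can further arrange that the upper distributional function of the pairwise distance equals $1$ at every positive argument and the lower distributional function vanishes in a neighborhood of $0$, which is exactly distributional chaos; Li--Yorke chaos then follows automatically. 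The main obstacle I anticipate is the rigorous $1$D reduction when $h$ is only a topological homeomorphism (so a classical tangent to $V$ need not exist a priori), together with the careful calibration of block-lengths to match the quantitative definition of distributional chaos rather than just Li--Yorke chaos; the compactness of $V$ and the hypothesis $\|f\|\|g\|>0$ are precisely what make both steps manageable.
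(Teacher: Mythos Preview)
The paper does not supply its own proof of this statement. Theorem~\ref{thm:chaotic_set_empty_interior-Li-Yorke_distributional_chaos} appears in the Preliminaries (Section~\ref{preliminaries}), where all results are explicitly quoted from Stockman and Raines~\cite{stockman_raines}; no argument is given or even sketched. So there is no in-paper proof to compare your proposal against.

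That said, your outline is the natural one and is essentially what one expects the cited proof to do: the hypotheses $\theta=\pi$ and $\|f\|\|g\|>0$ are tailored precisely so that the dynamics on the arc $V$ collapse to a one-dimensional switching inclusion $\dot\phi\in\{\alpha(\phi),-\beta(\phi)\}$ with $\alpha,\beta>0$, after which one codes an uncountable scrambled set by binary sequences with rapidly growing block lengths. Two small remarks. First, you do not need condition~(1) of Definition~\ref{df:chaotic_set} to obtain positive lower bounds on the speeds: continuity of $\|f\|,\|g\|$ on the compact set $V$ already gives uniform bounds on all of $[0,1]$, not merely on compact sub-intervals of $(0,1)$. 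Second, the worry you raise about tangency when $h$ is only a topological homeomorphism is legitimate, but $F$-invariance of $V$ resolves it: through every $x\in V$ there is some $\gamma\in V^*$ with $\gamma(0)=x$, and on a short interval where $\dot\gamma=f(\gamma)$ (or $g(\gamma)$) this $\gamma$ is a $C^1$ curve lying in $V$ with nonzero velocity, which forces $V$ to have a well-defined tangent direction there and makes the arc-length reduction rigorous.
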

\begin{lemma}
\label{lemma_chaotic_set_in_R2}
Let $K \subset \mathbb{R}^2$ be non-empty closed set such that $g$ has unbounded solutions in $K$. Let $a \in K$ and let $P$ be simple path from $a$ to $a$ generated by $D$ such that $P \subset K$ (so $P$ can be a finite union of arcs and Jordan curves). $C_i$, $i=1..n$ ($n$ is number of Jordan curves) denote interiors (bounded components) of such Jordan curves (according to Jordan Curve Theorem). Then $P \cup \bigcup_{i=1..n}C_i$ is chaotic set.
\end{lemma}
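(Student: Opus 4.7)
The plan is to verify, step by step, that $V := P \cup \bigcup_{i=1}^n C_i$ meets each requirement of Definition~\ref{df:chaotic_set}: non-emptiness, compactness, $F$-invariance, and conditions (\ref{df:chaotic_set_a}) and (\ref{df:chaotic_set_b}). Non-emptiness is immediate since $a \in P$. For compactness I would observe that $P$ is the continuous image of a compact interval, hence compact; each $C_i$ is bounded by the Jordan Curve Theorem, and its boundary is precisely the $i$-th Jordan curve, which lies in $P$. Hence $\overline{C_i} \subset V$ for each $i$, so $V$ is closed and bounded, therefore compact.

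Next, to establish $F$-invariance I would build, for every $x \in V$, a trajectory $\gamma_x \in D$ with $\gamma_x(0)=x$ and $\gamma_x(t) \in V$ for all $t \geq 0$. If $x \in P$, write $x=\tilde\gamma(s)$, where $\tilde\gamma \in D$ parametrises $P$ on $[t_0,t_1]$ with $\tilde\gamma(t_0)=\tilde\gamma(t_1)=a$; I would concatenate the tail $\tilde\gamma|_{[s,t_1]}$ with periodic repetitions of $\tilde\gamma|_{[t_0,t_1]}$, obtaining a trajectory confined to $P \subset V$ forever. If $x \in C_i$, the hypothesis that $g$ admits unbounded solutions in $K$, combined with the boundedness of $C_i$, forces a $g$-trajectory issued from $x$ to leave $C_i$ at some first time $\tau$ through a point on $\partial C_i \subset P$; at the instant $\tau$ I would switch to the periodic traversal of $P$ just constructed.

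For condition (\ref{df:chaotic_set_a}), given $p,q \in V$ I would splice at most three pieces into a single simple path from $p$ to $q$ generated by $V^*$: if $p \in C_i$, a $g$-arc from $p$ to some $p' \in \partial C_i \subset P$; an arc of $P$ traversed forward along $\tilde\gamma$ from $p'$ (or $p$ itself, if $p \in P$), possibly completing one full loop through $a$, to reach a point $q' \in P$ adjacent to $q$; and finally, if $q$ lies in some interior $C_j$, a $g$- or $f$-arc entering $C_j$ from $q' \in \partial C_j$ that lands exactly on $q$. Each splice adds only one discontinuity of the derivative, so $\dot\gamma$ has finitely many on $[t_0,t_1]$, and appending the $F$-invariant continuation from the previous step keeps the full orbit in $V$. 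Condition (\ref{df:chaotic_set_b}) is immediate once one observes that a simple path from $a$ back to $a$ must contain at least one Jordan curve, so $n \geq 1$; then $U := C_1$ is a non-empty open subset of $V$ in the relative topology, and the periodic trajectory along $P$ has its orbit contained in $P \subset V \setminus U$, so it is not dense in $V$.

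The main obstacle will be the last leg of condition (\ref{df:chaotic_set_a}) when the target $q$ lies strictly inside some $C_j$. Exiting an interior is free, because $g$ has unbounded solutions, but entering a prescribed interior point by forward integration requires either the $f$-branch or a $g$-trajectory through $q$ whose backward history meets $\partial C_j$, allowing us to board at the boundary and disembark exactly at $q$. Ensuring that such a trajectory exists for every $q \in C_j$ requires a careful geometric analysis of how the two vector fields sweep the interior, and this is where the bulk of the technical work concentrates.
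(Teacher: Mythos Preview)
The paper does not prove this lemma; it is stated without proof in Section~\ref{preliminaries} among results quoted from Stockman--Raines~\cite{stockman_raines}, so there is no in-house argument to compare your attempt against.

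Your outline is structurally sound, but the argument for condition~(\ref{df:chaotic_set_b}) contains a genuine error: the assertion that a simple path from $a$ back to $a$ must contain a Jordan curve is false. The definition of simple path only forbids $\gamma(s)=a$ for interior $s$; it does not require injectivity, so if $f$ and $g$ happen to be antiparallel along an arc issuing from $a$, then $\gamma$ may run out under one branch and return under the other along the \emph{same} arc, giving $n=0$ and $V=P$ a bare segment (this is exactly the situation of Theorem~\ref{thm:chaotic_set_empty_interior-Li-Yorke_distributional_chaos}). The lemma still holds there---a shorter back-and-forth orbit has range a proper closed sub-arc, whose relative complement furnishes $U$---but your argument does not cover it. Even when $n\ge 1$, the choice $U=C_1$ is unsafe: nothing prevents another portion of $P$ from dipping into $C_1$, so the periodic $P$-orbit need not avoid $C_1$; one should instead take $U$ to be a small ball inside $C_1\setminus P$, which is nonempty since $P$ has empty interior in $\mathbb{R}^2$. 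As for condition~(\ref{df:chaotic_set_a}), you correctly isolate the hard step---reaching a prescribed $q\in C_j$ in forward time---but stop short of resolving it. The missing ingredient is Poincar\'e--Bendixson: the unboundedness hypothesis on $g$ rules out equilibria and periodic orbits of $g$ inside $\overline{C_j}$, so the \emph{backward} $g$-orbit from $q$ must also exit $C_j$, through some $q'\in\partial C_j\subset P$; the forward $g$-orbit from $q'$ then delivers $q$ while staying in $\overline{C_j}\subset V$.
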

\begin{thm}
\label{thm:hyperb_sing_point-chaotic_set}
Let $x^* \in X \subseteq \mathbb{R}^2$,~$f(x^*)=0$ and $g(x^*) \neq 0$, let $\lambda_1,~\lambda_2$ be eigenvalues of Jacobi's matrix of the system $\dot x = f(x)$ in the point $x^*$ and $e_1,~e_2$ be corresponding eigenvectors. We choose $\delta > 0$ such that $g(x) \neq 0$ for every $x \in \bar{B}_{\delta}(x^*)$. Let the solution of $\dot x = g(x)$ be unbounded in $\bar{B}_{\delta}(x^*)$ (non-empty closed subset of $X \subset \mathbb{R}^2$). 
\begin{enumerate}
\item \label{thm:sink_source-chaotic_set} We assume that there exists $\varepsilon > 0$ such that $x^*$ is source (i.e. unstable node or focus) or sink (i.e. stable node or focus) for $f$ on $B_{\varepsilon}(x^*)$. Then $F$ admits a chaotic set.
\item \label{thm:saddle-chaotic_set} We assume that $\lambda_1<0,~\lambda_2>0$ (i.e. $x^*$ is saddle point) and $g(x^*) \neq \alpha e_1$ and $g(x^*) \neq \beta e_2$, where $\alpha,\beta \in \mathbb{R} \setminus \{0\}$. Then $F$ admits a chaotic set with non-empty interior.
\end{enumerate}
\end{thm}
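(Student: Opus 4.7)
The plan is to reduce both parts to Lemma~\ref{lemma_chaotic_set_in_R2} by exhibiting, inside the closed set $K := \bar{B}_\delta(x^*)$, a point $a \in K$ and a simple path from $a$ to $a$ generated by $D$ that is in fact a Jordan curve lying in $K$. The chaotic set will then be this Jordan curve together with its (possibly empty) interior. Since the hypothesis already supplies the unbounded $g$-solution in $\bar{B}_\delta(x^*)$, the only remaining task is the geometric construction of the loop, and for part~(\ref{thm:saddle-chaotic_set}) the additional step of verifying that the loop encloses a genuinely two-dimensional region.

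The central geometric observation is that $f$ supplies the "curved" pieces of the loop through its local phase portrait at $x^*$, while $g$ supplies the "transversal" pieces which, on a sufficiently small ball around $x^*$, are nearly straight segments in the direction $g(x^*) \neq 0$. For part~(\ref{thm:sink_source-chaotic_set}) with $x^*$ a focus, the plan is: choose a ray $R$ from $x^*$ along $\pm g(x^*)$, pick a point $a \in R \cap B_\varepsilon(x^*)$, and follow the $f$-orbit through $a$ for exactly one revolution; that orbit meets $R$ again at a point $a'$ strictly closer to (sink case) or strictly farther from (source case) $x^*$. An intermediate-value argument applied to the radial coordinate along the $g$-flow then shows that the $g$-trajectory starting from $a'$ returns to $a$, producing a Jordan curve composed of an $f$-spiral arc and a $g$-arc. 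When $x^*$ is a node rather than a focus, the analogous loop is assembled using the two eigendirections of the Jacobian together with short $g$-arcs, after selecting the switching points close enough to $x^*$ that $f$ is accurately described by its linearisation.

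For part~(\ref{thm:saddle-chaotic_set}) the local $f$-phase portrait at the saddle $x^*$ contains a stable manifold $W^s$ tangent to $e_1$ and an unstable manifold $W^u$ tangent to $e_2$, meeting transversally at $x^*$ and partitioning a small ball around $x^*$ into four hyperbolic sectors. The hypothesis $g(x^*) \notin \mathrm{span}(e_1) \cup \mathrm{span}(e_2)$ says that $g(x^*)$ is transverse to both invariant lines, and by continuity the same transversality holds throughout a small neighbourhood of $x^*$. The loop is then built by following $f$ outward from $a$ along a leaf near $W^u$, switching to $g$ to cross an invariant manifold transversally, following a second $f$-arc inward along a leaf near $W^s$, and closing with a second $g$-arc back to $a$. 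Because the two $f$-arcs lie in distinct hyperbolic sectors and the $g$-arcs meet each $f$-arc only at the chosen switching points, the resulting curve is simple; because it genuinely encircles a two-dimensional piece of the saddle's neighbourhood, its interior is non-empty and open, delivering the stronger conclusion needed to invoke Theorem~\ref{thm:chaotic_set_non_empty_interior-Li-Yorke_distributional_chaos}.

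The main obstacle in both parts is verifying \emph{simplicity} of the candidate loop: one must place the switching points close enough to $x^*$ that the $f$-arcs are well controlled by the hyperbolic normal form and the $g$-arcs are nearly rectilinear, and then check that these arcs do not accidentally cross one another (nor leave $K$) before the loop closes. These verifications rely on continuity of $f$, $g$ and their flows, on the normal-form description of a hyperbolic singular point, and, in the saddle case, on the transversality hypothesis $g(x^*) \notin \mathrm{span}(e_1) \cup \mathrm{span}(e_2)$. Once simplicity is established, Lemma~\ref{lemma_chaotic_set_in_R2} immediately yields the desired chaotic set.
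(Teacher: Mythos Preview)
The paper does not supply its own proof of Theorem~\ref{thm:hyperb_sing_point-chaotic_set}: it is stated in the Preliminaries section as a result taken over from Stockman and Raines~\cite{stockman_raines}, so there is no in-paper argument to compare against directly.

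Your overall strategy---build, inside $K=\bar B_\delta(x^*)$, a simple closed loop out of $f$-arcs and $g$-arcs and then invoke Lemma~\ref{lemma_chaotic_set_in_R2}---is exactly the technique the paper does carry out in detail when it proves the complementary saddle case, Theorem~\ref{thm:saddle_collinear-chaotic_set}. There, however, the loop has only \emph{two} arcs: a piece $\{\psi^s(z):p_1\le s\le p_2\}$ of a single $g$-trajectory that enters one hyperbolic sector (crossing $W^u$, heading toward $W^s$) and a single $f$-arc $\{\varphi^t(w):0\le t\le T\}$ starting from a point $w$ on that $g$-segment, close to $W^s$, and returning to the same $g$-segment after sweeping through the sector. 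Your four-arc saddle construction (two $f$-arcs in different sectors joined by two transversal $g$-arcs) can be made to work, but it is more elaborate than necessary and multiplies the simplicity checks; staying inside one sector, as in the paper's Theorem~\ref{thm:saddle_collinear-chaotic_set}, avoids having to control crossings of both invariant manifolds.

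As written, your proposal is an outline rather than a proof. You explicitly defer the decisive step---simplicity of the loop and containment in $K$---to ``continuity'' and ``the normal-form description'' without carrying it out. In the focus case the intermediate-value sketch is plausible, but you must match the sign of $\pm g(x^*)$ and the sink/source alternative so that the $g$-arc from $a'$ to $a$ runs in \emph{forward} time (recall $T=[0,\infty)$ here). The node case is essentially unaddressed: ``the analogous loop is assembled using the two eigendirections'' is not a construction. These are not wrong ideas, but they are real gaps that would have to be filled before this counts as a proof.
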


\newpage
\section{Chaos in Continuous Dynamical System \\ Generated by Euler Equation Branching in plane $\mathbb{R}^2$}

First, we show that the existence of so-called chaotic set $V \subset X \subseteq \mathbb{R}^2$ (see Definition \ref{df:chaotic_set}) is always admitted in the continuous dynamical system generated by Euler equation branching. In some sense, we can find so-called Parrondo's paradox in this continuous dynamical system. In this way, it means that two asymptotically stable solution of two branches $\dot x = f(x)$ and $\dot x = g(x)$ can produce chaotic sets in the continuous dynamical system generated by Euler equation branching $\dot x \in \{f(x),g(x) \}$. We extend the theory presented by Stockman and Raines in \cite{stockman_raines}. We create a comprehensive overview of all possibilities with detecting of so-called chaotic sets (see definition \ref{df:chaotic_set}) in such systems.

The chaotic set $V$ has three properties which are connected with the set of solutions $V^*$. The set $V^*$ contains solutions $\gamma$ which are together Devaney, Li-Yorke and distributional chaotic. The element $\gamma$ of dynamical system generated by Euler equation branching $D:=\{\gamma \in Z | \dot \gamma(t) \in F(\gamma(t)) \textit{ a.e.} \}$ is solution covering the part corresponding to the branch $\dot x = f(x)$, the part corresponding to the branch $\dot x = g(x)$ and also information when one branch is switched to the other. We show how such Devaney, Li-Yorke and distributional chaotic set of solutions $V^*$ can look like and we research the set of all Devaney, Li-Yorke and distributional chaotic sets $V^*$.

We consider only classical singular points corresponding to both branches $\dot x=f(x)$ and $\dot x=g(x)$, means with non-zero determinant of Jacobi's matrix of considering system. Furthermore, we consider that the singular points of both branches do not lie in the same point in $\mathbb{R}^2$, means $f(x^*)=0$ and $g(y^*)=0$ for $x^* \neq y^*$. Finally, we assume that both branches produce hyperbolic singular points, i.e. eigenvalues of Jacobi's matrices corresponding to the branch $f$ in the point $x^*$ and to $g$ in $y^*$ are not purely imaginary.

The cases where the both branches produce hyperbolic singular points and these points lie in the same point in $\mathbb{R}^2$ or at least one branch produce periodic solution (cycle) are not considered in this paper, but the principle and results in these cases seem to be very similar as in the considered case.  

\subsection{Comprehensive Overview of All Possibilities Admitting Chaotic Set $V$}
\label{overview_of_possibilities_admitting_chaotic_set}

First we research the possibilities on which Theorem \ref{thm:hyperb_sing_point-chaotic_set} can be applicable and which all admit chaotic sets, i.e. 
\begin{itemize}
\item combinations of $x^*$ sink or source (the stable or unstable node or focus) in the first branch and unbounded solution in $\bar{B}_{\delta}(x^*)$ in the second branch, 
\item or combinations of $x^*$ saddle in the first branch and unbounded solution in $\bar{B}_{\delta}(x^*)$ in the second branch with condition that the trajectory of unbounded solution passing through the saddle point $x^*$ has not the same or the directly opposite direction as the stable or unstable manifold of the saddle point $x^*$ in the point $x^* \in \mathbb{R}^2$, i.e. the vector $g(x^*)$ is not collinear with the eigenvectors $e_1$ or $e_2$ of the Jacobi's matrix corresponding to $f$ in the point $x^*$.
\end{itemize}

In Table \ref{tab:overview_hyp_sing_points_combinations_chaotic_sets} there is the overview of all combinations of hyperbolic singular points, references to the theorems from which the possible existence of chaotic sets and thus Devaney, Li-Yorke and distributional chaos follow, and links to the corresponding figures where the chaotic sets are indicated.

\begin{table}[ht]
\begin{center}
\begin{tabular}{|l|l|l|}
\hline
Branches $\dot x = f(x)$ and $\dot x = g(x)$ & Proof & Fig. \\
\hline \hline
\textit{unstable node - unstable node} &  Theorem \ref{thm:hyperb_sing_point-chaotic_set} (\ref{thm:sink_source-chaotic_set}), \ref{thm:chaotic_set-Devaney_chaos}, \ref{thm:chaotic_set_non_empty_interior-Li-Yorke_distributional_chaos} or \ref{thm:chaotic_set_empty_interior-Li-Yorke_distributional_chaos} & \ref{fig:unstable_node_unstable_node_1} or \ref{fig:unstable_node_unstable_node_2}\\
\hline
\textit{unstable node - stable node} & Theorem \ref{thm:hyperb_sing_point-chaotic_set} (\ref{thm:sink_source-chaotic_set}), \ref{thm:chaotic_set-Devaney_chaos}, \ref{thm:chaotic_set_non_empty_interior-Li-Yorke_distributional_chaos} or \ref{thm:chaotic_set_empty_interior-Li-Yorke_distributional_chaos} & \ref{fig:unstable_node_stable_node_1} or \ref{fig:unstable_node_stable_node_2} \\
\hline
\textit{unstable node - unstable focus} & Theorem \ref{thm:hyperb_sing_point-chaotic_set} (\ref{thm:sink_source-chaotic_set}), \ref{thm:chaotic_set-Devaney_chaos}, \ref{thm:chaotic_set_non_empty_interior-Li-Yorke_distributional_chaos} & \ref{fig:unstable_node_unstable_focus} \\
\hline
\textit{unstable node - stable focus} & Theorem \ref{thm:hyperb_sing_point-chaotic_set} (\ref{thm:sink_source-chaotic_set}), \ref{thm:chaotic_set-Devaney_chaos}, \ref{thm:chaotic_set_non_empty_interior-Li-Yorke_distributional_chaos} & \ref{fig:unstable_node_stable_focus}\\
\hline
\textit{unstable node - unstable saddle} & Theorem \ref{thm:hyperb_sing_point-chaotic_set} (\ref{thm:sink_source-chaotic_set}) or (\ref{thm:saddle-chaotic_set}), \ref{thm:chaotic_set-Devaney_chaos}, \ref{thm:chaotic_set_non_empty_interior-Li-Yorke_distributional_chaos} or \ref{thm:chaotic_set_empty_interior-Li-Yorke_distributional_chaos} & \ref{fig:unstable_node_unstable_saddle_1} or \ref{fig:unstable_node_unstable_saddle_2} \\
\hline
\textit{stable node - stable node} & Theorem \ref{thm:hyperb_sing_point-chaotic_set} (\ref{thm:sink_source-chaotic_set}), \ref{thm:chaotic_set-Devaney_chaos}, \ref{thm:chaotic_set_non_empty_interior-Li-Yorke_distributional_chaos} or \ref{thm:chaotic_set_empty_interior-Li-Yorke_distributional_chaos} & \ref{fig:stable_node_stable_node_1} or \ref{fig:stable_node_stable_node_2}\\
\hline
\textit{stable node - unstable focus} & Theorem \ref{thm:hyperb_sing_point-chaotic_set} (\ref{thm:sink_source-chaotic_set}), \ref{thm:chaotic_set-Devaney_chaos}, \ref{thm:chaotic_set_non_empty_interior-Li-Yorke_distributional_chaos} & \ref{fig:stable_node_unstable_focus} \\
\hline
\textit{stable node - stable focus} & Theorem \ref{thm:hyperb_sing_point-chaotic_set} (\ref{thm:sink_source-chaotic_set}), \ref{thm:chaotic_set-Devaney_chaos}, \ref{thm:chaotic_set_non_empty_interior-Li-Yorke_distributional_chaos} & \ref{fig:stable_node_stable_focus} \\
\hline
\textit{stable node - unstable saddle} & Theorem \ref{thm:hyperb_sing_point-chaotic_set} (\ref{thm:sink_source-chaotic_set}) or (\ref{thm:saddle-chaotic_set}), \ref{thm:chaotic_set-Devaney_chaos}, \ref{thm:chaotic_set_non_empty_interior-Li-Yorke_distributional_chaos} or \ref{thm:chaotic_set_empty_interior-Li-Yorke_distributional_chaos} & \ref{fig:stable_node_unstable_saddle_1} or \ref{fig:stable_node_unstable_saddle_2} \\
\hline
\textit{unstable focus - unstable focus} & Theorem  \ref{thm:hyperb_sing_point-chaotic_set} (\ref{thm:sink_source-chaotic_set}), \ref{thm:chaotic_set-Devaney_chaos}, \ref{thm:chaotic_set_non_empty_interior-Li-Yorke_distributional_chaos} & \ref{fig:unstable_focus_unstable_focus} \\
\hline
\textit{unstable focus - stable focus} & Theorem \ref{thm:hyperb_sing_point-chaotic_set} (\ref{thm:sink_source-chaotic_set}), \ref{thm:chaotic_set-Devaney_chaos}, \ref{thm:chaotic_set_non_empty_interior-Li-Yorke_distributional_chaos} & \ref{fig:unstable_focus_stable_focus} \\
\hline
\textit{unstable focus - unstable saddle} & Theorem \ref{thm:hyperb_sing_point-chaotic_set} (\ref{thm:sink_source-chaotic_set}) or (\ref{thm:saddle-chaotic_set}), \ref{thm:chaotic_set-Devaney_chaos}, \ref{thm:chaotic_set_non_empty_interior-Li-Yorke_distributional_chaos} & \ref{fig:unstable_focus_unstable_saddle_1} \\
\hline
\textit{stable focus - stable focus} & Theorem \ref{thm:hyperb_sing_point-chaotic_set} (\ref{thm:sink_source-chaotic_set}), \ref{thm:chaotic_set-Devaney_chaos}, \ref{thm:chaotic_set_non_empty_interior-Li-Yorke_distributional_chaos} & \ref{fig:stable_focus_stable_focus} \\
\hline
\textit{stable focus - unstable saddle} & Theorem  \ref{thm:hyperb_sing_point-chaotic_set} (\ref{thm:sink_source-chaotic_set}) or (\ref{thm:saddle-chaotic_set}), \ref{thm:chaotic_set-Devaney_chaos}, \ref{thm:chaotic_set_non_empty_interior-Li-Yorke_distributional_chaos} & \ref{fig:stable_focus_unstable_saddle_1} \\
\hline
\textit{unstable saddle - unstable saddle} & Theorem  \ref{thm:hyperb_sing_point-chaotic_set} (\ref{thm:saddle-chaotic_set}), \ref{thm:chaotic_set-Devaney_chaos}, \ref{thm:chaotic_set_non_empty_interior-Li-Yorke_distributional_chaos} & \ref{fig:unstable_saddle_unstable_saddle_1_2} \\
\hline
\end{tabular}
\vspace{0.3cm}
\caption{Overview of hyperbolic singular points combinations admitting chaotic sets with condition for saddle $g(x^*) \neq \alpha e_1$ and $g(x^*) \neq \beta e_2$}
\label{tab:overview_hyp_sing_points_combinations_chaotic_sets}
\end{center}
\end{table}

On the illustrative figures we present the both situations, where the branch $f$ corresponds to the black trajectories or to the blue trajectories according to relevant theorems. That is why there are predominately two figured chaotic sets. The chaotic sets are displayed by red hatched areas. The arrows show the directions of the trajectories. The principle of such chaotic behaviour is based on switching between these two branches. First the solution of such system goes alongside the black trajectory in direction of the corresponding arrow, then this solution switches to the second branch and goes alongside the blue trajectory in direction of the corresponding arrow etc. And vice versa, the moving point can go first alongside the blue trajectory and then after system switch alongside the black trajectory etc.

\begin{figure}[htp]
\centering
\begin{minipage}[c]{225pt}
\hspace{0.2cm}
  \includegraphics[width=195pt]{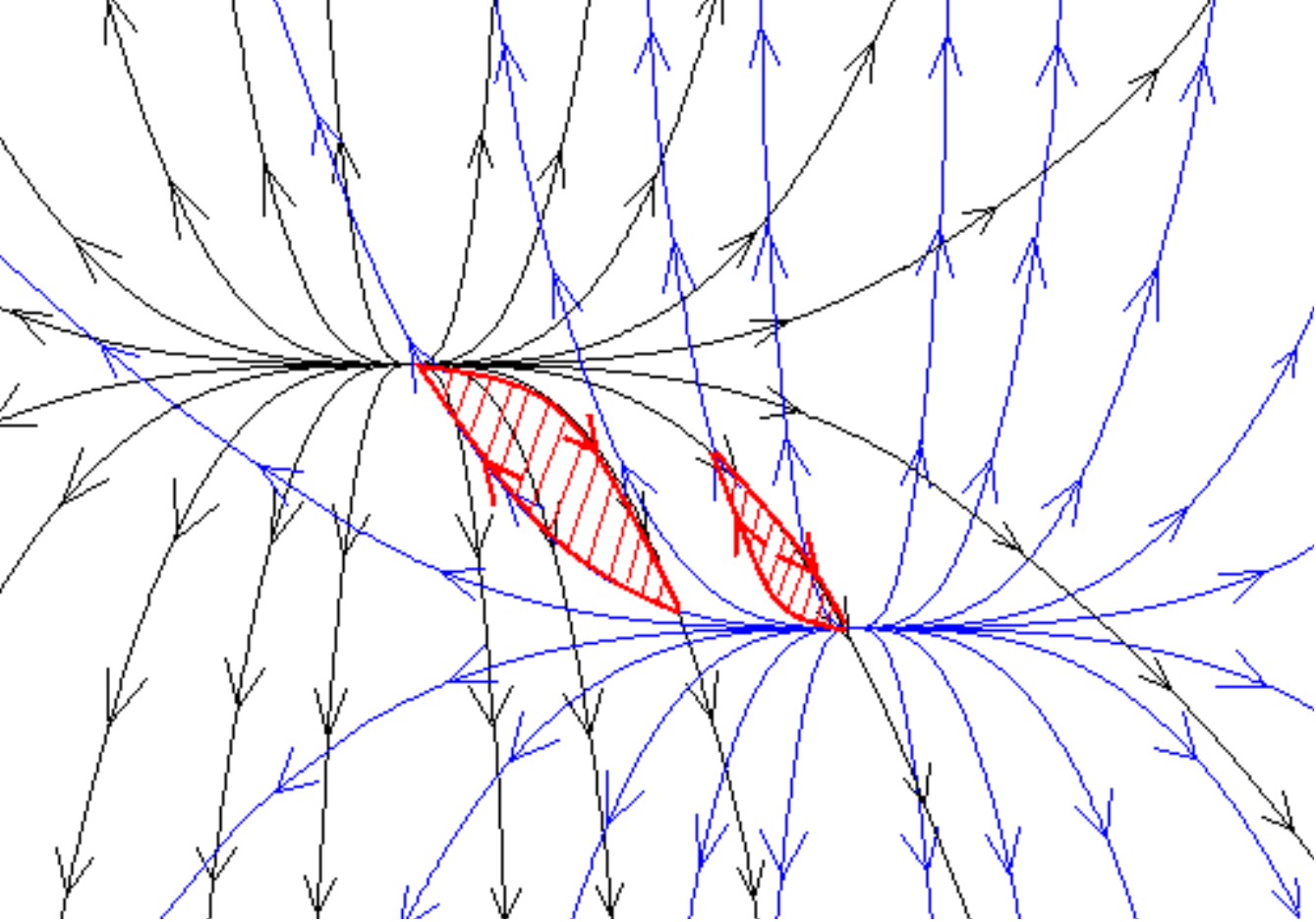}
\end{minipage}
\begin{minipage}[c]{225pt}
  \includegraphics[width=210pt]{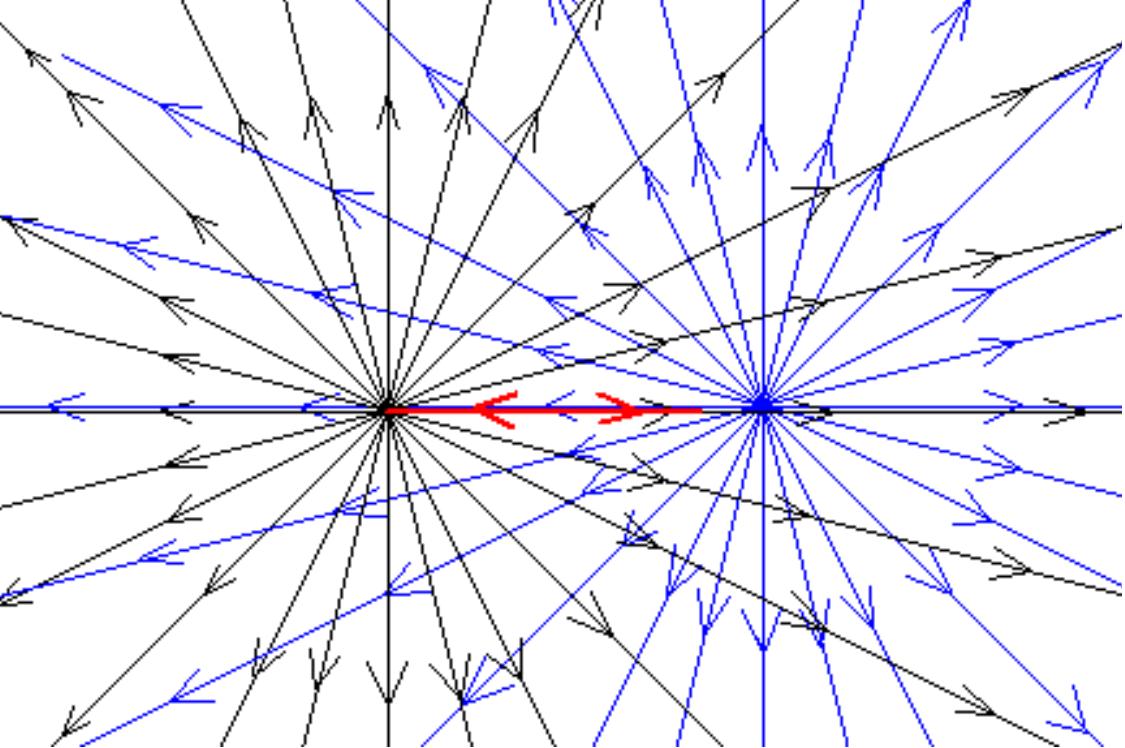}
\end{minipage}
\\
\begin{minipage}[c]{225pt}
  \caption{Chaotic sets with non-empty interior between two unstable nodes}
  \label{fig:unstable_node_unstable_node_1}
\end{minipage}
\begin{minipage}[c]{225pt}
  \caption{Chaotic set homeomorphic to $[0,1]$ with $\|f(x)\| \|g(x)\|>0$ and $\theta=\pi$ between two unstable nodes}
  \label{fig:unstable_node_unstable_node_2}
\end{minipage}
\vspace{0.7cm}
\\
\begin{minipage}[c]{225pt}
\hspace{0.5cm}  
  \includegraphics[width=180pt]{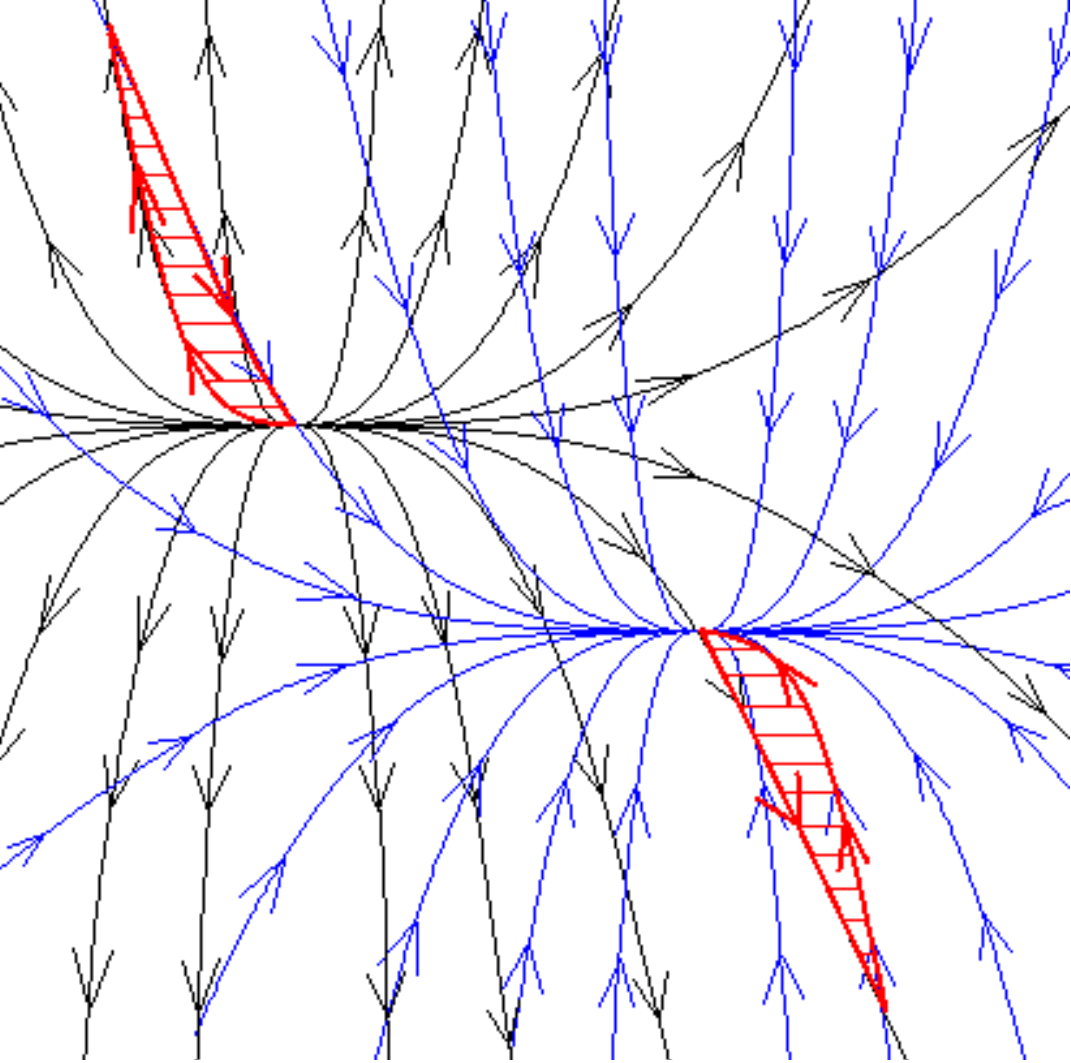}
\end{minipage}
\begin{minipage}[c]{225pt}
\hspace{0.7cm}  
  \includegraphics[width=178pt]{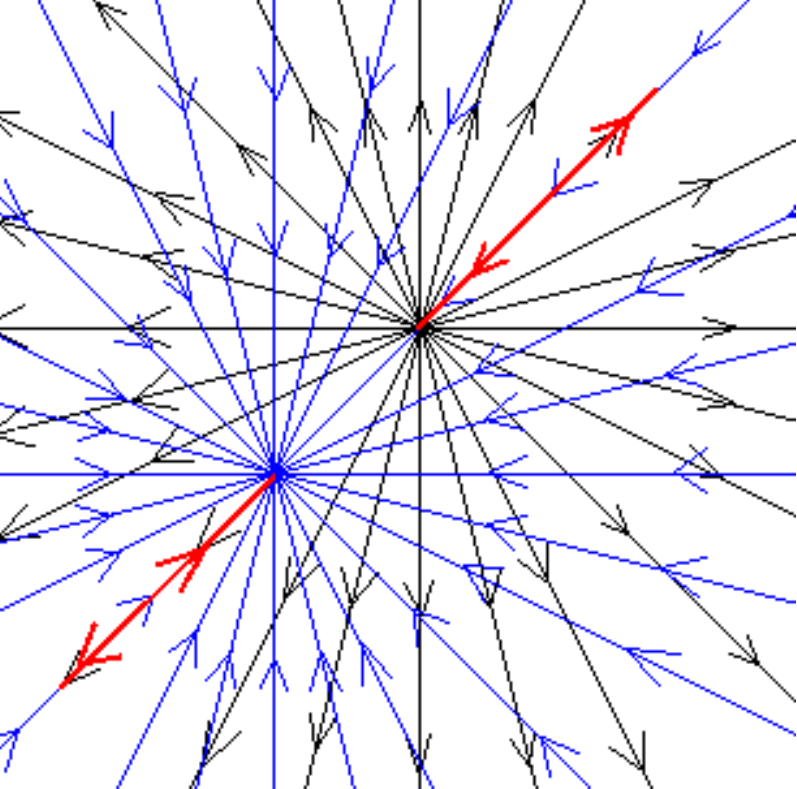}
\end{minipage}
\\
\begin{minipage}[c]{225pt}
  \caption{Chaotic sets with non-empty interior between unstable node and stable node}
  \label{fig:unstable_node_stable_node_1}
\end{minipage}
\begin{minipage}[c]{225pt}
  \caption{Chaotic sets homeomorphic to $[0,1]$ with $\|f(x)\| \|g(x)\|>0$ and $\theta=\pi$ between unstable and stable node}
  \label{fig:unstable_node_stable_node_2}
\end{minipage}
\end{figure}

\begin{figure}[htp]
\centering
\begin{minipage}[c]{215pt}
  \includegraphics[width=205pt]{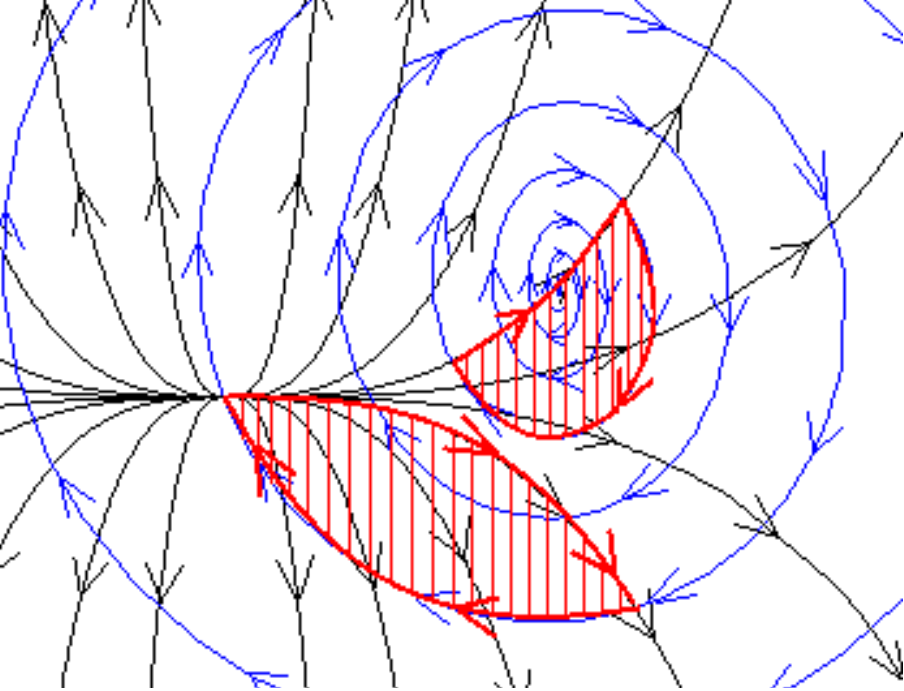}
\end{minipage}
\begin{minipage}[c]{210pt}
\hspace{0.5cm}  
 \includegraphics[width=180pt]{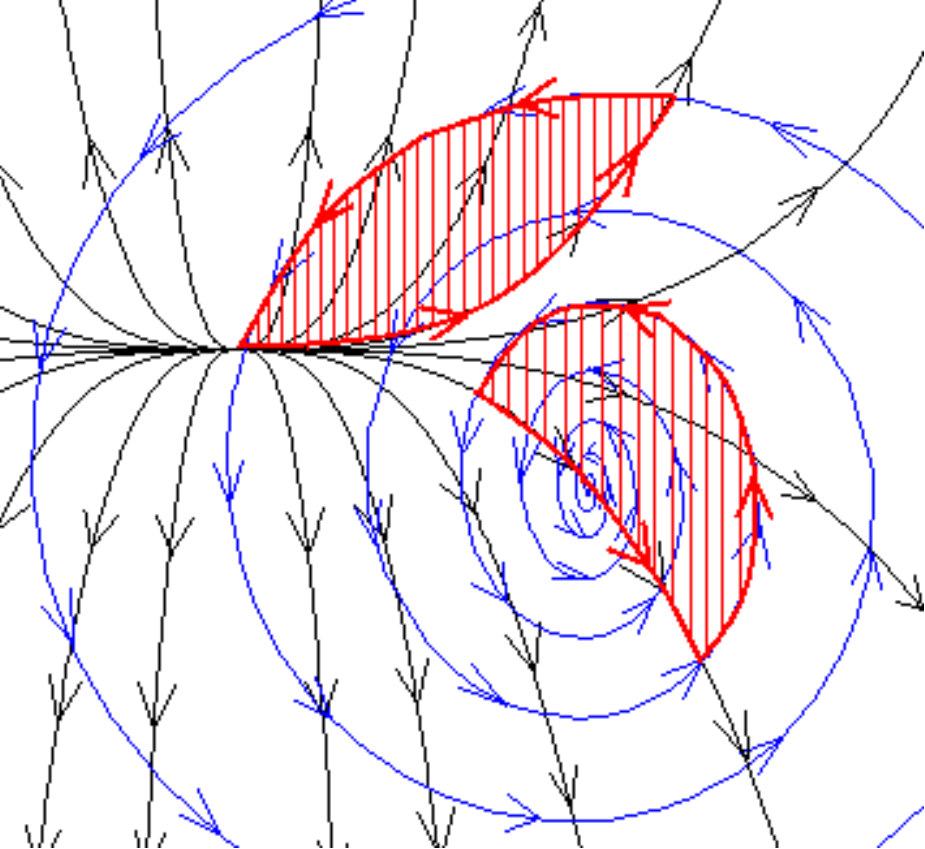}
\end{minipage} 
\begin{minipage}[c]{225pt} 
  \caption{Chaotic sets with non-empty interior between unstable node and unstable focus}
  \label{fig:unstable_node_unstable_focus}
\end{minipage}
\begin{minipage}[c]{225pt}
  \caption{Chaotic sets with non-empty interior between unstable node and stable focus}
  \label{fig:unstable_node_stable_focus}
\end{minipage}
\\
\vspace{1cm}
\begin{minipage}[c]{225pt}
\hspace{0.3cm}    
  \includegraphics[width=197pt]{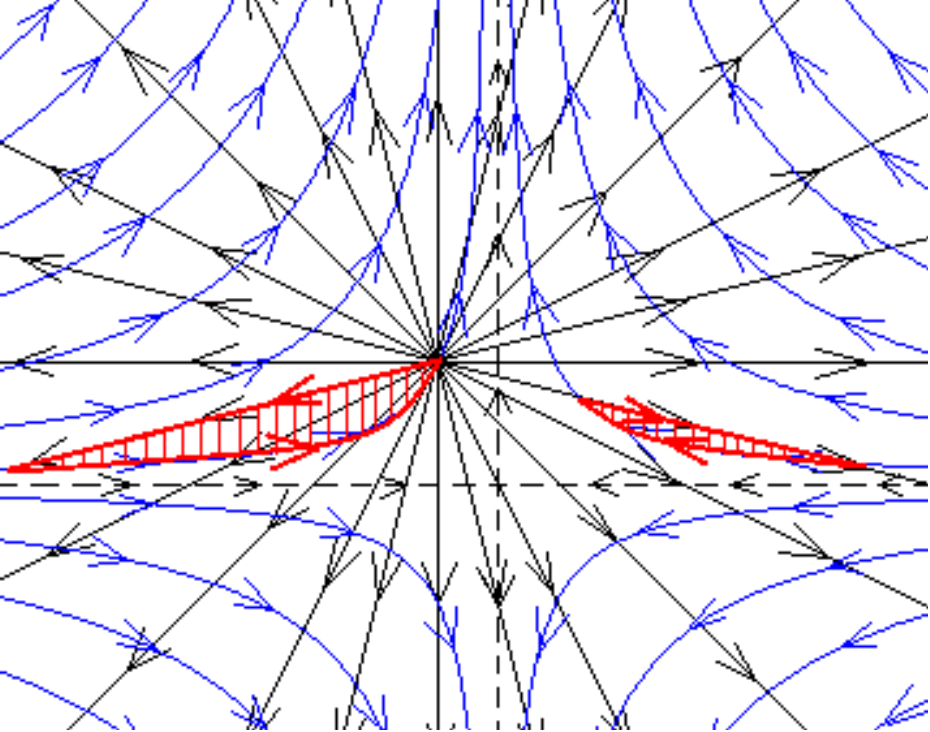}
\end{minipage}
\begin{minipage}[c]{225pt}
\hspace{0.1cm}  
 \includegraphics[width=214pt]{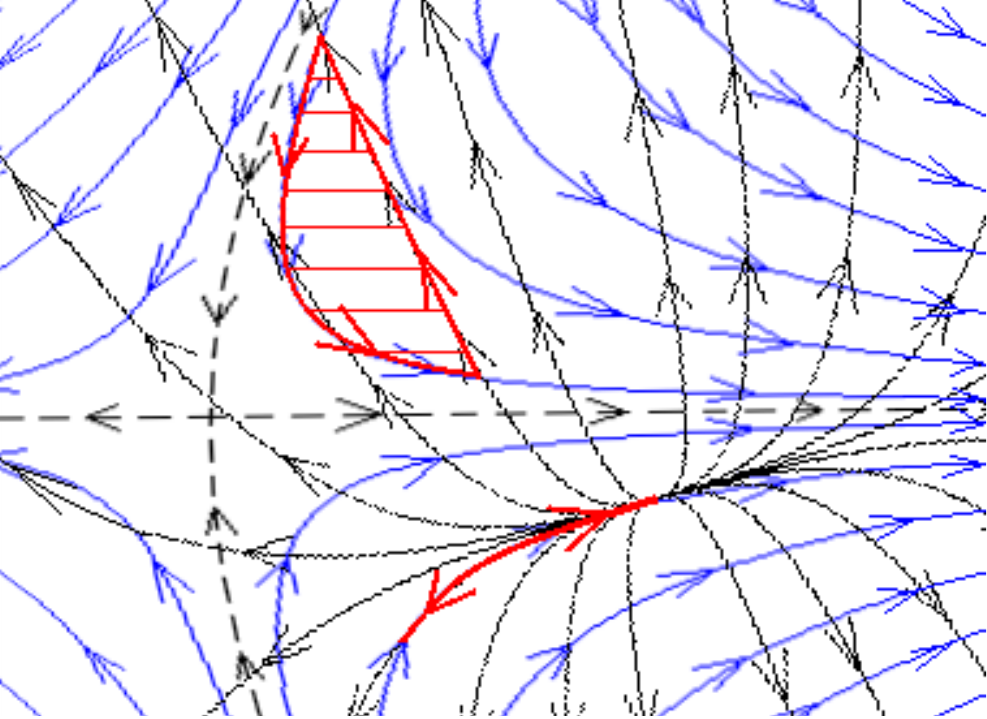}
\end{minipage}  
\\
\begin{minipage}[c]{225pt}
 \caption{Chaotic sets with non-empty interior between unstable node and unstable saddle fulfilling conditions $g(x^*) \neq \alpha e_1$ and $g(x^*) \neq \beta e_2$}
  \label{fig:unstable_node_unstable_saddle_1}
\end{minipage}
\begin{minipage}[c]{225pt}
 \caption{Chaotic sets with non-empty interior and homeomorphic to $[0,1]$ with $\|f(x)\| \|g(x)\|>0$ and $\theta=\pi$ between unstable node and unstable saddle fulfilling conditions $g(x^*) \neq \alpha e_1$ and $g(x^*) \neq \beta e_2$}
  \label{fig:unstable_node_unstable_saddle_2}
\end{minipage}
\end{figure}

\begin{figure}[htp]
\centering
\begin{minipage}[c]{225pt}
\hspace{0.6cm}  
  \includegraphics[width=175pt]{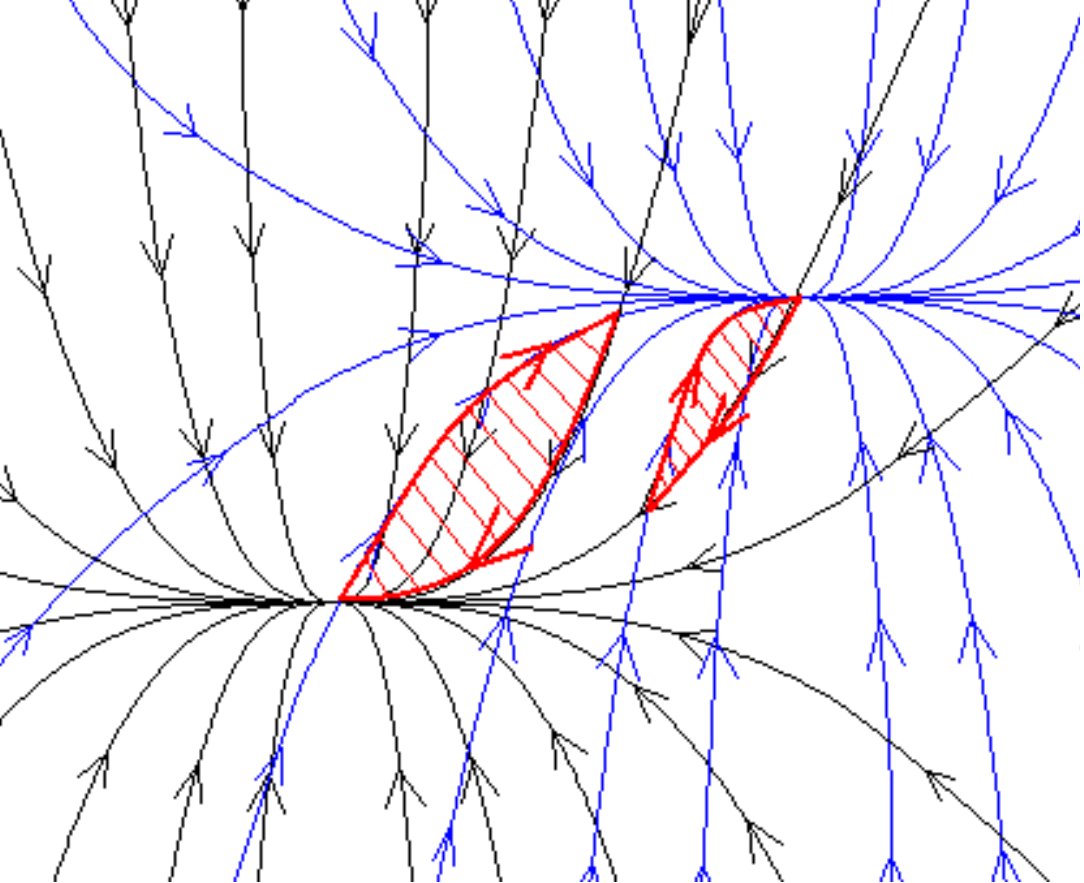}
\end{minipage}
\begin{minipage}[c]{225pt}
  \includegraphics[width=220pt]{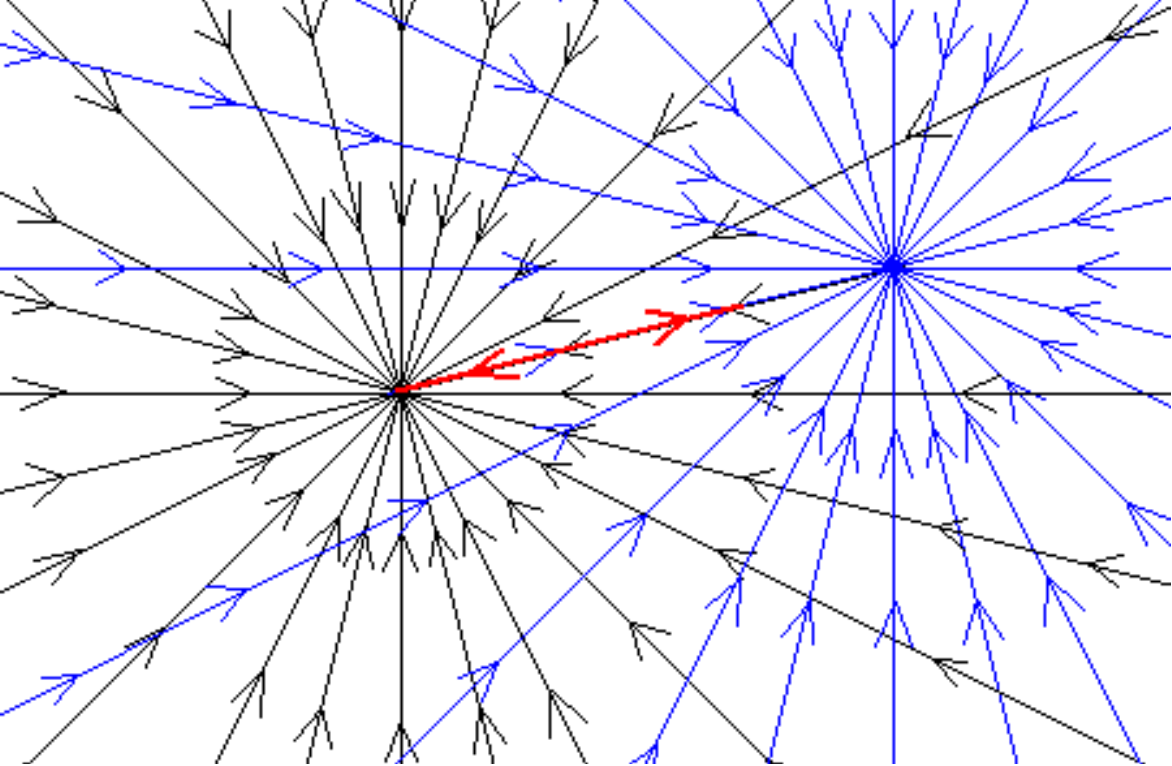}
\end{minipage}
\\
\begin{minipage}[c]{225pt}
  \caption{Chaotic sets with non-empty interior between two stable nodes}
  \label{fig:stable_node_stable_node_1}
\end{minipage}
\begin{minipage}[c]{225pt}
  \caption{Chaotic set homeomorphic to $[0,1]$ with $\|f(x)\| \|g(x)\|>0$ and $\theta=\pi$ between two stable nodes}
  \label{fig:stable_node_stable_node_2}
\end{minipage}
\\
\vspace{1cm}
\begin{minipage}[c]{210pt}
  \includegraphics[width=190pt]{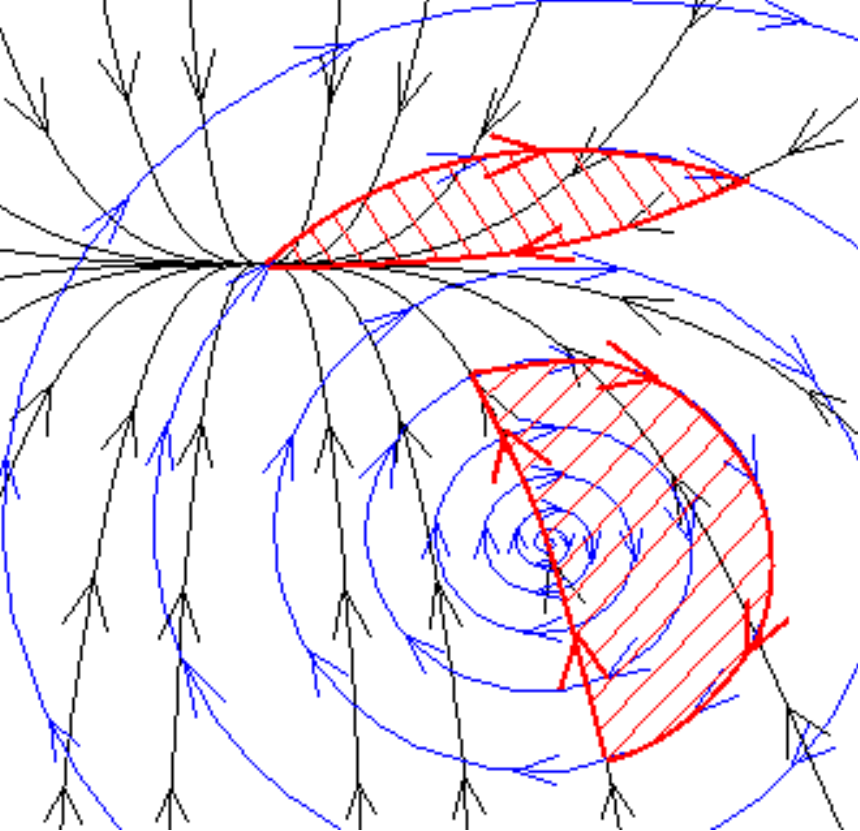}
\end{minipage}
\begin{minipage}[c]{210pt}
\hspace{0.6cm}  
 \includegraphics[width=170pt]{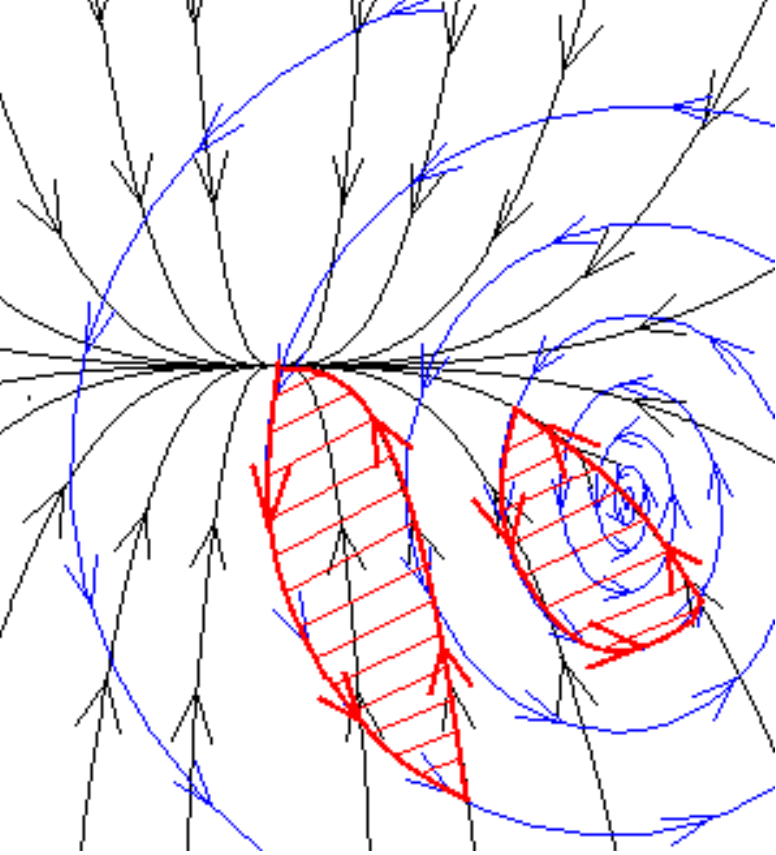}
\end{minipage}
\begin{minipage}[c]{225pt}
  \caption{Chaotic sets with non-empty interior between stable node and unstable focus}
  \label{fig:stable_node_unstable_focus}
\end{minipage}
\begin{minipage}[c]{225pt}
  \caption{Chaotic sets with non-empty interior between stable node and stable focus}
  \label{fig:stable_node_stable_focus}
\end{minipage}
\end{figure}

\begin{figure}[htp]
\centering
\begin{minipage}[c]{225pt}
\hspace{0.5cm}  
  \includegraphics[width=180pt]{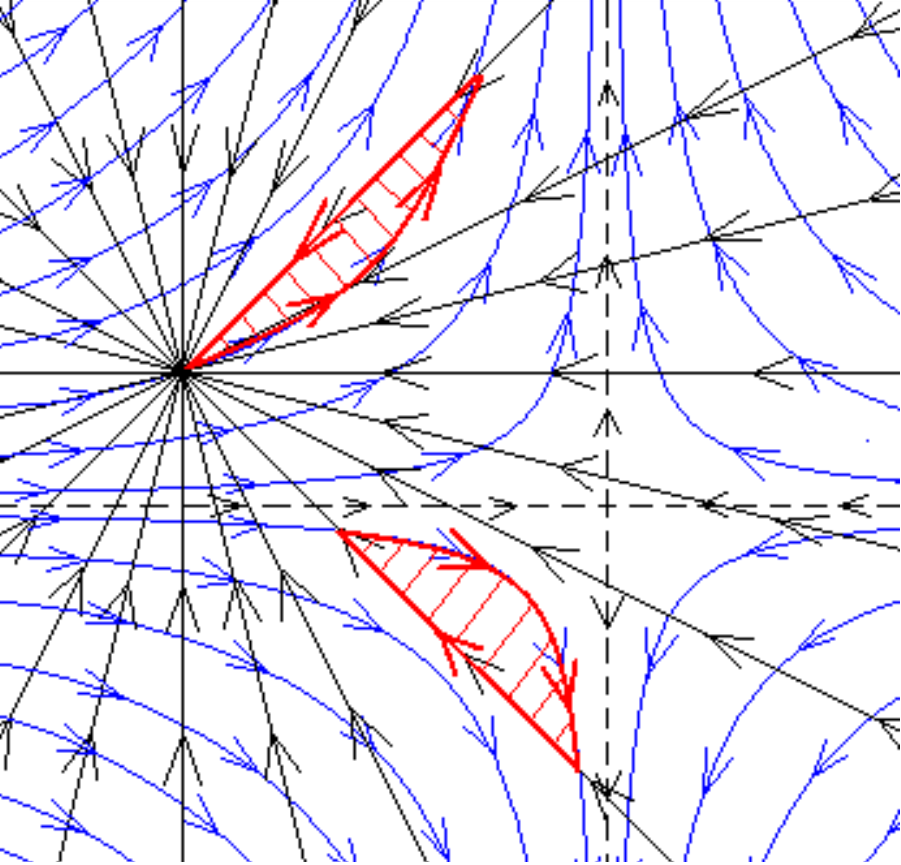}
\end{minipage}
\begin{minipage}[c]{225pt}
\hspace{0.6cm}  
 \includegraphics[width=180pt]{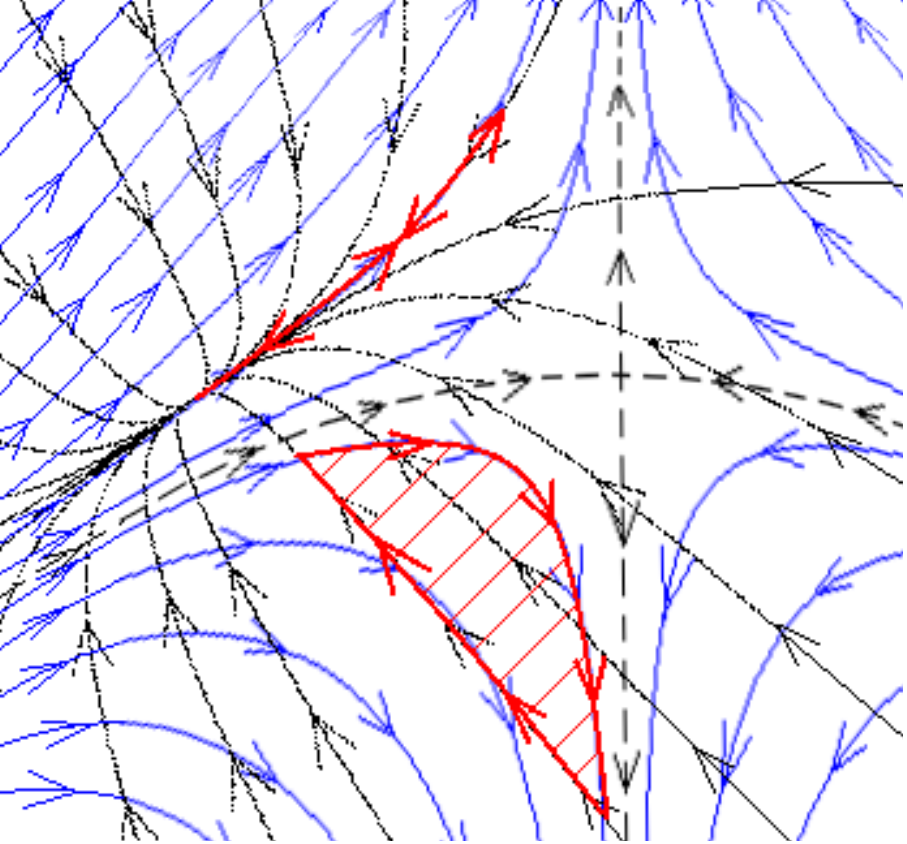}
\end{minipage}  
\begin{minipage}[c]{225pt}
 \caption{Chaotic sets with non-empty interior between stable node and unstable saddle fulfilling conditions $g(x^*) \neq \alpha e_1$ and $g(x^*) \neq \beta e_2$}
  \label{fig:stable_node_unstable_saddle_1}
\end{minipage}
\begin{minipage}[c]{225pt}
 \caption{Chaotic sets with non-empty interior and homeomorphic to $[0,1]$ with $\|f(x)\| \|g(x)\|>0$ and $\theta=\pi$ between stable node and unstable saddle fulfilling conditions $g(x^*) \neq \alpha e_1$ and $g(x^*) \neq \beta e_2$}
  \label{fig:stable_node_unstable_saddle_2}
\end{minipage}
\\
\vspace{0.5cm}
\begin{minipage}[c]{149pt}
  \includegraphics[width=130pt]{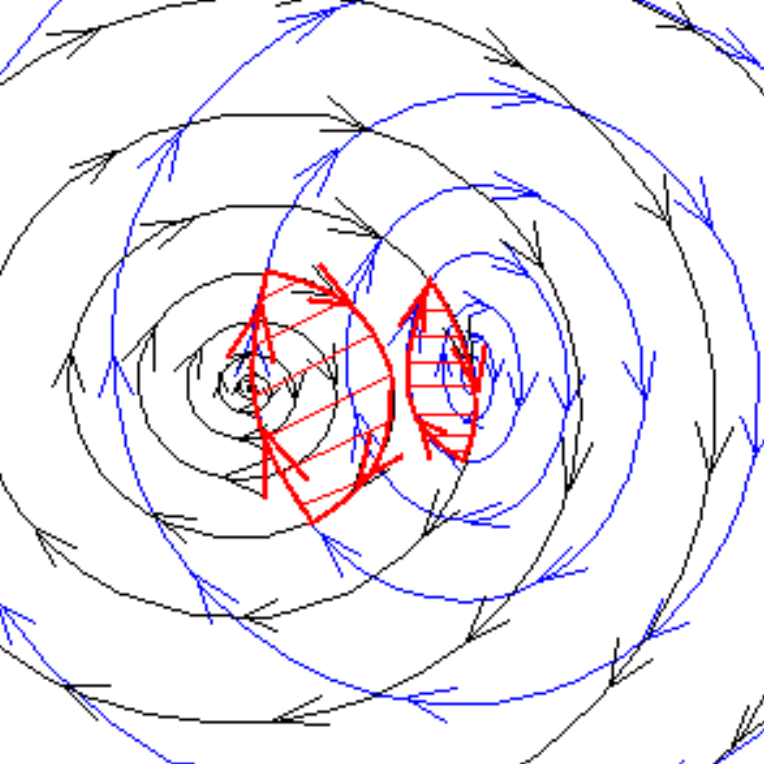}
\end{minipage}
\begin{minipage}[c]{149pt}
 \includegraphics[width=132pt]{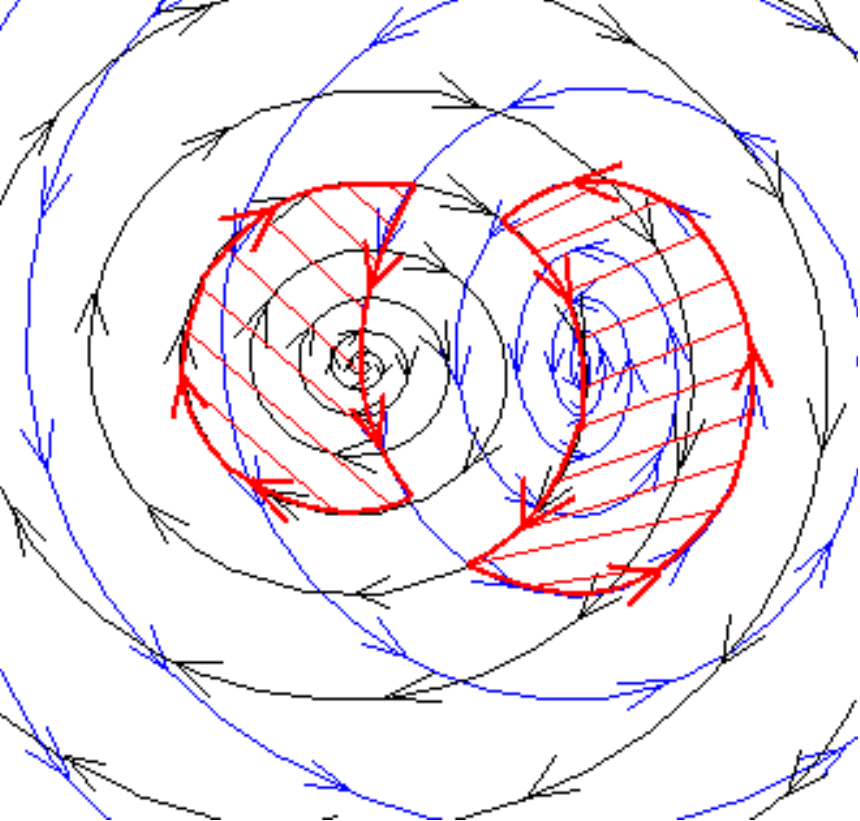}
\end{minipage}
\begin{minipage}[c]{149pt}
 \includegraphics[width=137pt]{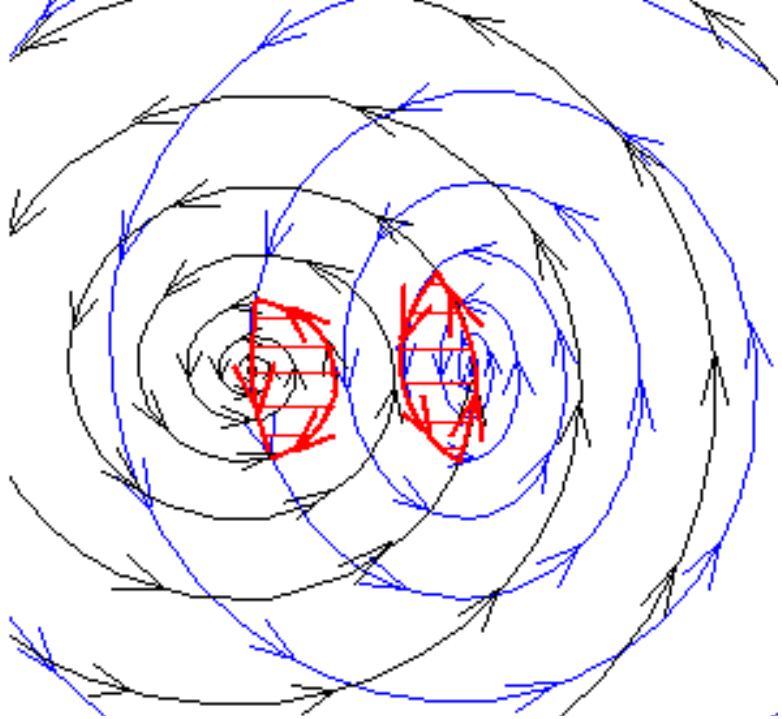}
\end{minipage}

\begin{minipage}[c]{149pt}
  \caption{Chaotic sets with non-empty interior between two unstable foci}
  \label{fig:unstable_focus_unstable_focus}
\end{minipage}
\begin{minipage}[c]{149pt}
  \caption{Chaotic sets with non-empty interior between unstable and stable focus}
  \label{fig:unstable_focus_stable_focus}
\end{minipage}
\begin{minipage}[c]{149pt}
  \caption{Chaotic sets with non-empty interior between two stable foci}
  \label{fig:stable_focus_stable_focus}
\end{minipage}
\end{figure}

\begin{figure}[htp]
\centering
\begin{minipage}[c]{225pt}
\hspace{0.5cm}  
  \includegraphics[width=190pt]{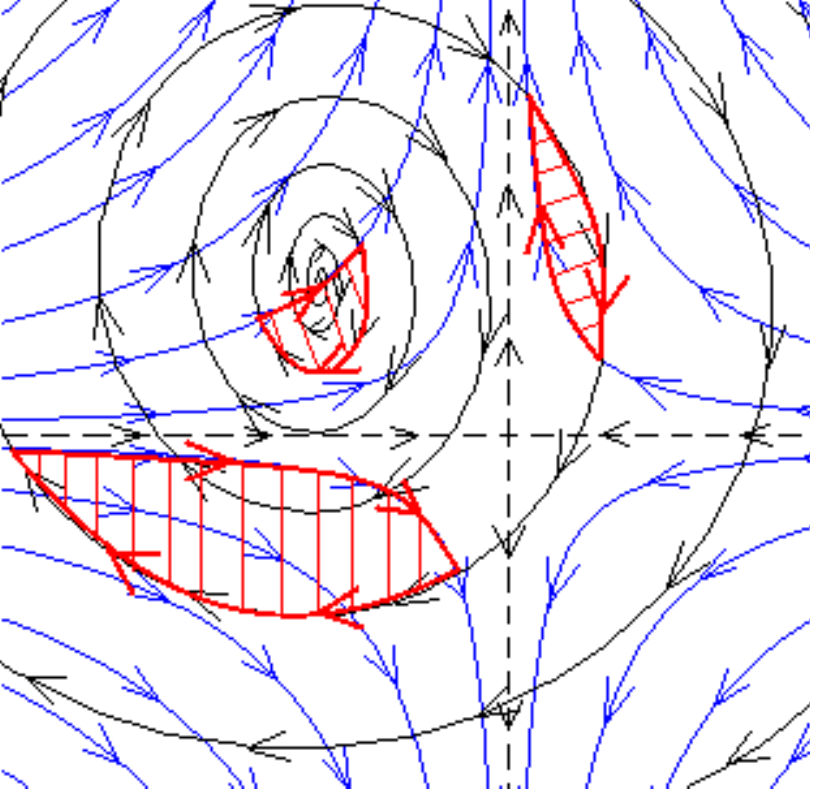}
\end{minipage}
\begin{minipage}[c]{225pt}
\hspace{0.4cm}  
 \includegraphics[width=190pt]{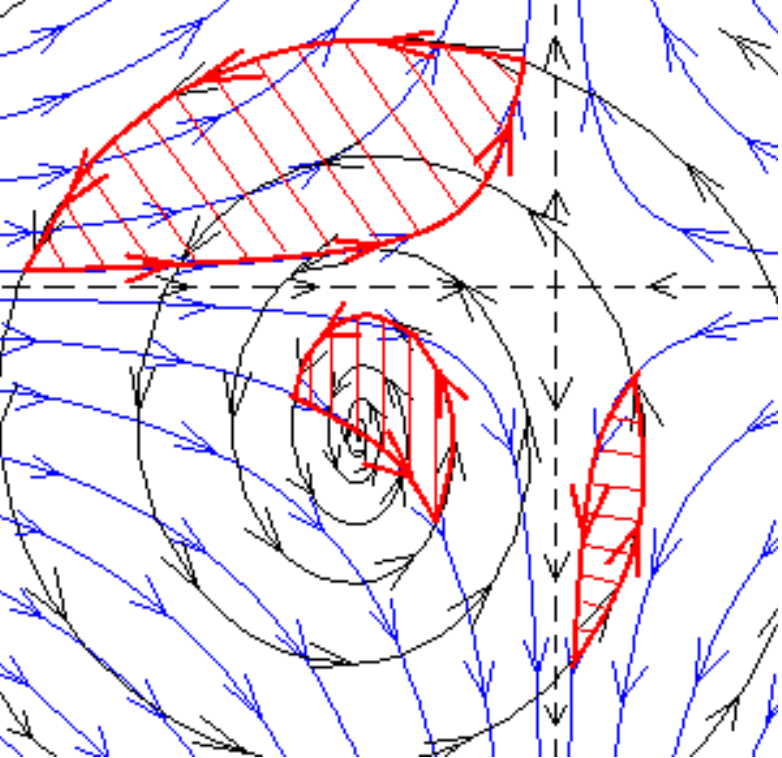}
\end{minipage}  
\begin{minipage}[c]{225pt} 
  \caption{Chaotic sets with non-empty interior between unstable focus and unstable saddle fulfilling conditions $g(x^*) \neq \alpha e_1$ and $g(x^*) \neq \beta e_2$}
  \label{fig:unstable_focus_unstable_saddle_1}
\end{minipage}
\begin{minipage}[c]{225pt} 
  \caption{Chaotic sets with non-empty interior between stable focus and unstable saddle fulfilling conditions $g(x^*) \neq \alpha e_1$ and $g(x^*) \neq \beta e_2$}
  \label{fig:stable_focus_unstable_saddle_1}
\end{minipage}

\vspace{1cm}
\begin{minipage}[c]{225pt}
\hspace{0.5cm}  
  \includegraphics[width=190pt]{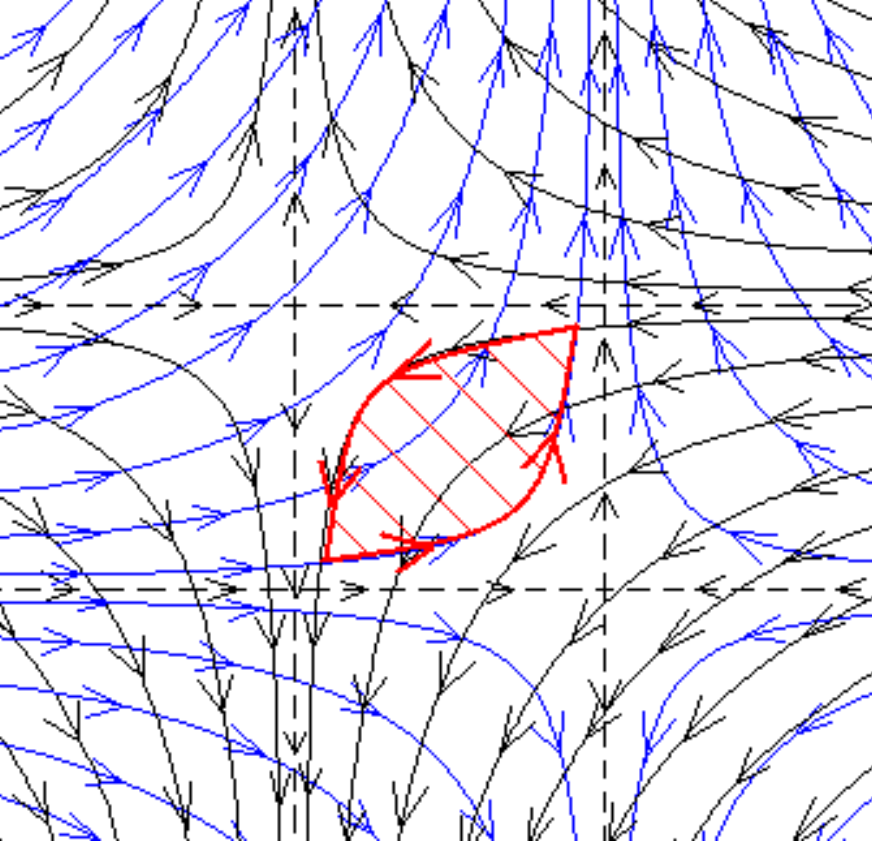}
\end{minipage}
\begin{minipage}[c]{225pt}
\hspace{0.4cm}  
  \includegraphics[width=190pt]{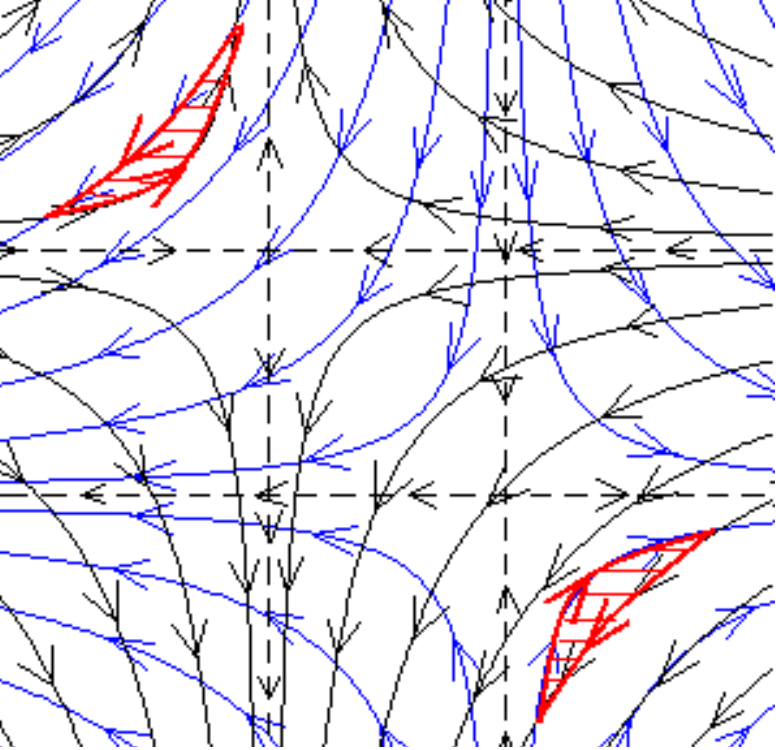}
\end{minipage}  
  \caption{Chaotic sets with non-empty interior between two unstable saddles fulfilling conditions $g(x^*) \neq \alpha e_1$ and $g(x^*) \neq \beta e_2$}
  \label{fig:unstable_saddle_unstable_saddle_1_2}
\end{figure}

\clearpage

Now, we research the possibilities which are not considered above and on which Theorem \nolinebreak \ref{thm:hyperb_sing_point-chaotic_set} can not be applicable. These possibilities are cases when the trajectory of unbounded solution passing through the saddle point $x^*$ has the same or the directly opposite direction as the stable or unstable manifold of the saddle in the point $x^*$, means the vector $g(x^*)$ is collinear with the eigenvectors $e_1$ or $e_2$. We show that every such possibilities can produce chaotic sets.

Let remind notations: $x^* \in X \subset \mathbb{R}^2$,~$f(x^*)=0$ and $g(x^*) \neq 0$, $\lambda_1,~\lambda_2$ are eigenvalues of Jacobi's matrix of the system $\dot x = f(x)$ in the point $x^*$ and $e_1,~e_2$ are corresponding eigenvectors. We choose $\delta > 0$ such that the solution of $\dot x = g(x)$ is unbounded in $\bar{B}_{\delta}(x^*)$ and $g(x) \neq 0$ for every $x \in \bar B_{\delta}(x^*)$.

\begin{thm}
\label{thm:saddle_collinear-chaotic_set}
Let $\lambda_1<0,~\lambda_2>0$ (i.e. $x^*$ is saddle point) and $g(x^*) = \alpha e_1$ or $g(x^*) = \beta e_2$, where $\alpha,\beta \in \mathbb{R} \setminus \{0\}$. Then $F$ admits a chaotic set. 
\end{thm}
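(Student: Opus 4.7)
The plan is to apply Lemma \ref{lemma_chaotic_set_in_R2} with the closed set $K := \bar B_\delta(x^*)$, which by hypothesis is a non-empty closed subset of $X$ in which $g$ has unbounded solutions. It then suffices to exhibit a base point $a \in K$ together with a simple path from $a$ to $a$ generated by $D$ whose trace lies in $K$; the required chaotic set will be that path together with the interiors of the Jordan-curve components of its complement.

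Without loss of generality I treat the case $g(x^*) = \alpha e_1$, where $e_1$ is the eigenvector of $\lambda_1 < 0$; the case $g(x^*) = \beta e_2$ is symmetric, since reversing time in $f$ exchanges the roles of $e_1$ and $e_2$ (and of stable and unstable manifolds), reducing that sub-case to this one. Work in local coordinates centred at $x^*$ aligned with $e_1, e_2$. Let $W^s, W^u$ denote the local stable and unstable manifolds of the saddle $x^*$, tangent to $e_1, e_2$ respectively, and let $\Gamma$ denote the $g$-orbit through $x^*$. The geometric feature specific to this theorem is that $\Gamma$ and $W^s$ are both tangent to the $e_1$-axis at $0$, so the transverse-crossing argument used in Theorem \ref{thm:hyperb_sing_point-chaotic_set}(\ref{thm:saddle-chaotic_set}) is unavailable.

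I will construct the required loop as follows. First, choose a point $a \in \Gamma$ close to $x^*$ with $a \notin W^s \cup W^u$; such $a$ exists because $\Gamma$ and $W^s$ are integral curves of two different vector fields (and in the degenerate sub-case where they coincide locally, one instead uses a nearby $g$-orbit crossing $W^s$ transversally at points different from $x^*$). Follow $g$ from $a$ along $\Gamma$ through $x^*$ to a point $b \in \Gamma$ on the opposite side of $x^*$, staying inside $\bar B_\delta$; this $g$-arc provides the outgoing portion of the loop. To close back to $a$, follow the $f$-orbit through $b$: since $b \notin W^s$, by the saddle structure this orbit moves first along the $e_1$-direction, turns in the appropriate hyperbolic sector, and exits along the $e_2$-direction. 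A single $f$-orbit will generically not return to $a$, so I insert a finite number of further switches, short $g$-segments transverse to $f$-orbits, routed through the quadrant opposite the outgoing $\Gamma$-arc, until the concatenated curve lands back at $a$.

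The main obstacle is to arrange these switches so that the resulting concatenation is a \emph{simple} path from $a$ to $a$ in the sense of the definition, i.e.\ the parametrising $\gamma$ does not revisit $a$ on $(t_0,t_1)$ and $P$ is indeed a finite union of arcs and Jordan curves. This has to be extracted from the hyperbolic-sector picture away from $x^*$ (where transverse crossings between $f$- and $g$-orbits are generic, even though $\Gamma$ is tangent to $W^s$ at $x^*$), combined with the unboundedness of $g$-orbits in $K$, which supplies the freedom needed to route the return arc clear of the outgoing one. Once such $P$ is produced, Lemma \ref{lemma_chaotic_set_in_R2} immediately yields the chaotic set $V = P \cup \bigcup_{i=1}^{n} C_i$ for $F$, proving the theorem. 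I would expect the chaotic set obtained here to be homeomorphic to $[0,1]$ in the fully degenerate sub-case where $\Gamma$ agrees with $W^s$ locally, and to have non-empty interior generically, mirroring the dichotomy already visible in the comprehensive overview.
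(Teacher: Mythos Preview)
Your overall strategy (apply Lemma~\ref{lemma_chaotic_set_in_R2} with $K=\bar B_\delta(x^*)$ and manufacture a simple closed path from two flows) matches the paper, and you correctly identify the tangency between $\Gamma$ and the relevant invariant manifold as the obstruction to simply invoking Theorem~\ref{thm:hyperb_sing_point-chaotic_set}(\ref{thm:saddle-chaotic_set}). But the actual construction you propose has a genuine gap: the step ``insert a finite number of further switches \dots\ until the concatenated curve lands back at $a$'' is not an argument, and nothing in your proposal guarantees such a finite concatenation exists, stays in $\bar B_\delta(x^*)$, and is simple. Using $\Gamma$ itself as the outgoing arc and then trying to steer back to the same point $a\in\Gamma$ with ad hoc switches is exactly the kind of closing problem that the tangency makes delicate; you have not explained what transversality or monotonicity you would use to force the return.

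The paper sidesteps this entirely. In the generic sub-case (your ``non-degenerate'' one) it does \emph{not} use $\Gamma$; instead it observes that since $\psi^t(x^*)$ passes from one quadrant to an adjacent one through $x^*$, a slightly displaced $g$-orbit through a nearby point $z$ must cross $W^u$, then $W^s$, then $W^u$ again. The middle $g$-arc between the $W^u$-crossing and the $W^s$-crossing lies entirely in one quadrant, and a single $f$-orbit launched from a point $w$ on that arc sufficiently close to $W^s$ is forced by the saddle dynamics to re-hit the same $g$-arc at some later time $T$. Thus the loop is closed with exactly two arcs (one $g$, one $f$) and is automatically a Jordan curve with non-empty interior; no ``finite number of further switches'' is needed. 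In the truly degenerate sub-case where $\psi^t(x^*)$ overlaps $W^s$ (or $W^u$) on a whole neighbourhood, the paper gives a separate, explicit construction producing a chaotic set homeomorphic to $[0,1]$. You allude to this dichotomy in your final sentence but do not supply either construction; the key missing idea is to abandon $\Gamma$ in favour of a nearby $g$-orbit with the three-crossing pattern.
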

\begin{proof}
Let $\varphi^t(x)$ denote flow generated by $f$ and $\psi^t(x)$ denote flow generated by $g$. Let $W^s$ denote stable manifold and $W^u$ denote unstable manifold corresponding to $f$. Because $f$ and $g$ are continuous, the solution corresponding to $g$ in $\bar{B}_{\delta}(x^*)$ is unbounded and $g(x) \neq 0$ for every $x \in \bar B_{\delta}(x^*)$, then for sufficiently small $0<\varepsilon<\delta$ we can distinguish two possible cases:
\begin{enumerate}
\item \label{proof:g_neq_mu.f} $g(x) \neq \mu f(x)$ for every $x \in \{ W^s \cap \bar{B}_{\varepsilon}(x^*) \} \setminus \{x^*\}$ or $x \in \{ W^u \cap \bar{B}_{\varepsilon}(x^*) \} \setminus \{x^*\}$ for any $\mu \in \mathbb{R} \setminus \{0\}$;
\item \label{proof:g_eq_mu.f} $g(x) = \mu f(x)$ for every $x \in \{ W^s \cap \bar{B}_{\varepsilon}(x^*) \} \setminus \{x^*\}$ or $x \in \{ W^u \cap \bar{B}_{\varepsilon}(x^*) \} \setminus \{x^*\}$ for some $\mu \in \mathbb{R} \setminus \{0\}$.
\end{enumerate}
$\mu$ can not be zero. Zero $\mu$ would lead to $g(x^*)=0$, but we assume the opposite ($g(x) \neq 0$ for every $x \in \bar B_{\delta}(x^*)$). \\
Ad (\ref{proof:g_neq_mu.f}) Assume $x^*$ is unstable saddle point, $g(x^*) = \alpha e_1$ or $g(x^*) = \beta e_2$ where $\alpha,\beta \in \mathbb{R} \setminus \{0\}$, and $g(x) \neq \mu f(x)$ for every $x \in \{ W^s \cap \bar{B}_{\varepsilon}(x^*) \} \setminus \{x^*\}$ or $x \in \{ W^u \cap \bar{B}_{\delta}(x^*) \} \setminus \{x^*\}$ for any $\mu \in \mathbb{R} \setminus \{0\}$. The stable manifold $W^s$ and the unstable manifold $W^u$ corresponding to $f$ divide the ball $B_{\varepsilon}(x^*)$ in four quadrants (I, II, III, IV). We consider $g(x^*)=\beta e_2$ (corresponding to $W^u$). The condition $g(x^*)=\beta e_2$ and $g(x) \neq \mu f(x)$ for every $x \in \{ W^u \cap \bar{B}_{\varepsilon}(x^*) \} \setminus \{x^*\}$ for any $\mu \in \mathbb{R} \setminus \{0\}$ imply that for sufficiently small neighbourhood of the point $x^*$ the flow $\psi^t(x^*)$ is in the quadrant $j$ for $t<0$ (or $t>0$) sufficiently close to $0$ and in the quadrant $j+1$ mod $4$ for $t>0$ (or $t<0$) sufficiently close to $0$ for some $j \in \{ 1,2,3,4 \}$, see the Figure \ref{fig:proof_admitted_chaos_theorem_1}. It depends on notations of quadrants and on positions of $W^s$ and $W^u$.
\begin{figure}[ht]
  \centering
  \includegraphics[height=6cm]{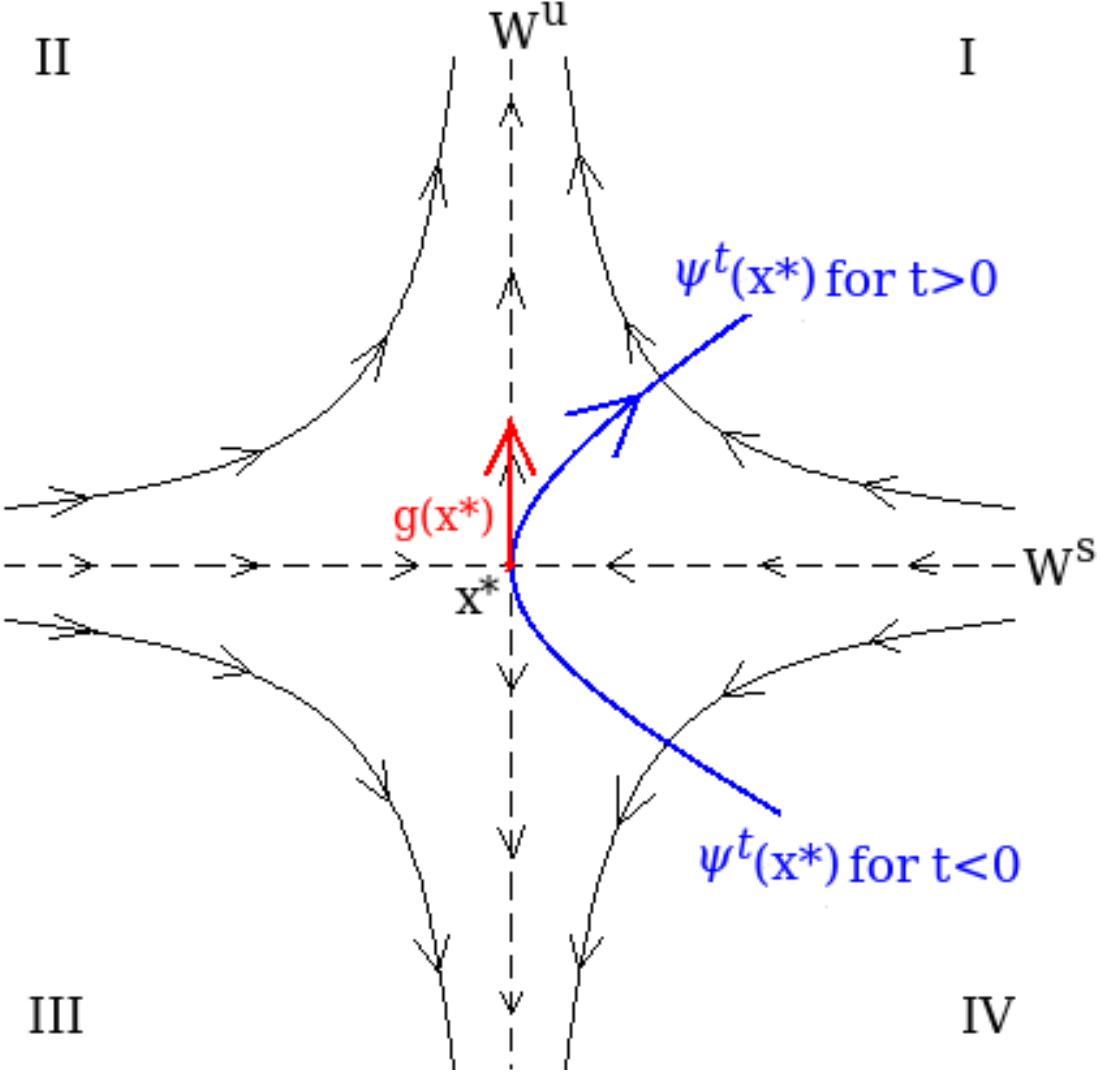}
  \caption{Proof of Theorem \ref{thm:saddle_collinear-chaotic_set} - the first scheme of part (\ref{proof:g_neq_mu.f})}
  \label{fig:proof_admitted_chaos_theorem_1}
\end{figure}
Without loss of generality we consider that $\psi^t(x^*)$ for $t<0$ is in the quadrant IV and $\psi^t(x^*)$ for $t>0$ is in the quadrant I, see Figure \ref{fig:proof_admitted_chaos_theorem_1}. Thus, let $T_1<0$ be maximal and $T_2>0$ be minimal such that $\psi^{T_1}(x^*),\psi^{T_2}(x^*) \in \partial B_{\varepsilon}(x^*)$, see Figure \ref{fig:proof_admitted_chaos_theorem_2}. Then let $\varepsilon'<\frac{\varepsilon}{2}$ and $t_1<0$ be maximal and $t_2>0$ be minimal ($T_1<t_1<0<t_2<T_2$) such that $B_{\frac{\varepsilon'}{2}}(\psi^{t_1}(x^*)) \cap B_{\varepsilon'}(x^*)\cap W^u=\emptyset$ and $B_{\frac{\varepsilon'}{2}}(\psi^{t_2}(x^*)) \cap B_{\varepsilon'}(x^*)\cap W^u=\emptyset$, see Figure \ref{fig:proof_admitted_chaos_theorem_2}.
\begin{figure}[ht]
  \centering
  \includegraphics[height=6.5cm]{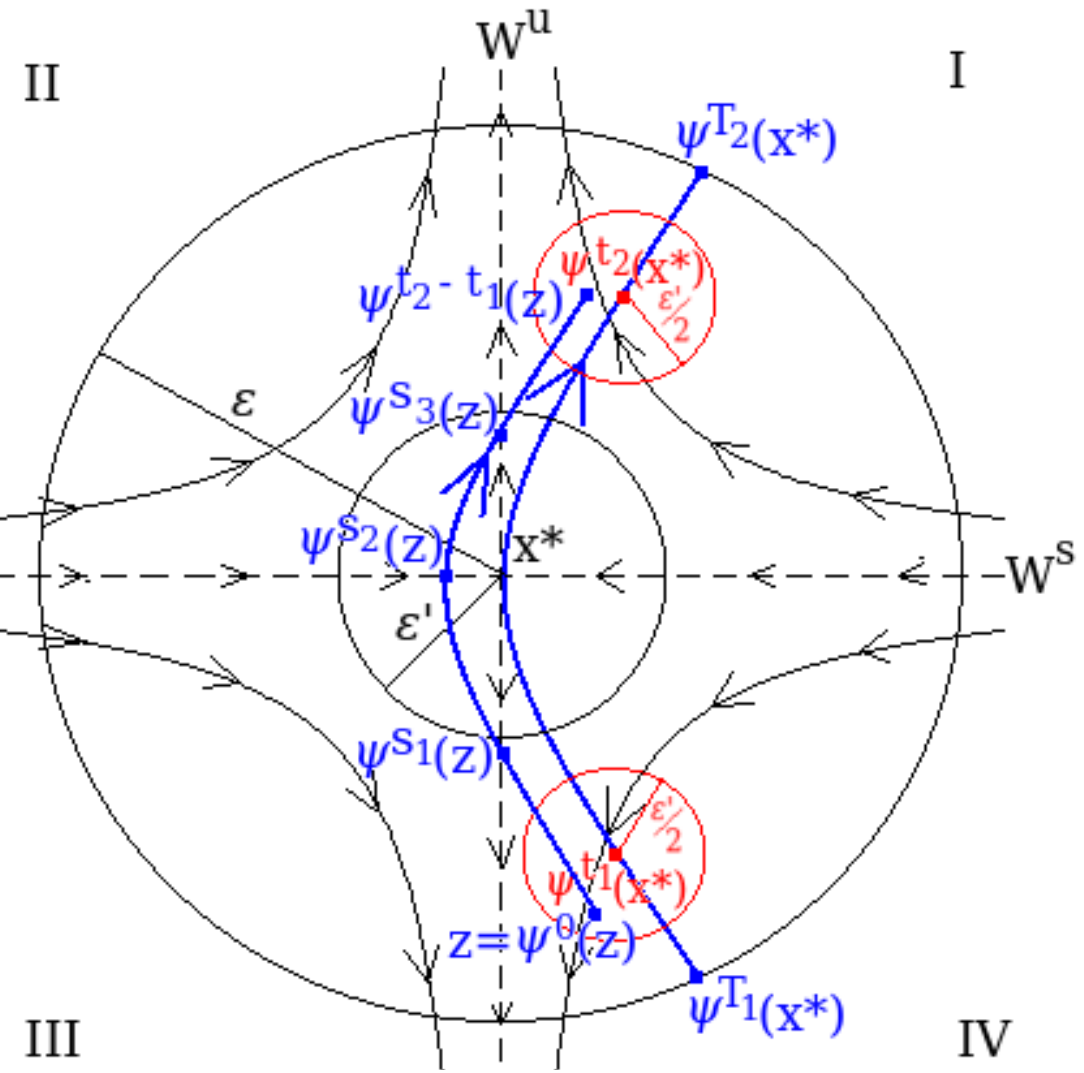}
  \caption{Proof of Theorem \ref{thm:saddle_collinear-chaotic_set} - the second scheme of part (\ref{proof:g_neq_mu.f})}
  \label{fig:proof_admitted_chaos_theorem_2}
\end{figure}
Thus, there exists $z \in B_{\frac{\varepsilon'}{2}}(\psi^{t_1}(x^*))$ such that $d(\psi^t(z),\psi^{t_1+t})(x^*))<\frac{\varepsilon'}{2}$ for all $0 \leq t\leq t_2-t_1$ and such that $\psi^{s_1}(z) \in W^u$, $\psi^{s_2}(z) \in W^s$ and $\psi^{s_3}(z) \in W^u$ for some $0<s_1<s_2<s_3<t_2-t_1$, see the Figure \ref{fig:proof_admitted_chaos_theorem_2}. So now, we are interested only in quadrant III. If we have the opposite direction of $\psi^t(x^*)$, then we will be interested in the quadrant II, see the Figure \ref{fig:proof_admitted_chaos_theorem_2}. Then, firstly we choose $\rho>0, t_0>0$ such that $B_{\rho}(\varphi^{t_0}(\psi^{s_2}(z))) \cap \{ \psi^s(z): s_1 \leq s \leq s_2 \}=\emptyset$, see Figure \ref{fig:proof_admitted_chaos_theorem_3}. Secondly, we choose $r>0$ such that if $w \in B_r(\psi^{s_2}(z))$ then $d(\varphi^t(w),\varphi^t(\psi^{s_2}(z)))<\rho$ for every $0 \leq t \leq t_0$, see Figure \ref{fig:proof_admitted_chaos_theorem_3}. At the end we pick $w \in \{ \psi^s(z): s_1 \leq s < s_2 \} \cap B_r(\psi^{s_2}(z))$. So, $w \notin W^s \cup W^u$ and thus $\varphi^t(w) \notin W^s \cup W^u$ for all $t \geq 0$. Hence there exists $T>0$ such that $\varphi^T(w) \in \{ \psi^s(z): s_1 \leq s \leq s_2 \}$, see Figure \ref{fig:proof_admitted_chaos_theorem_3}.
\begin{figure}[ht]
  \centering
  \includegraphics[height=6.5cm]{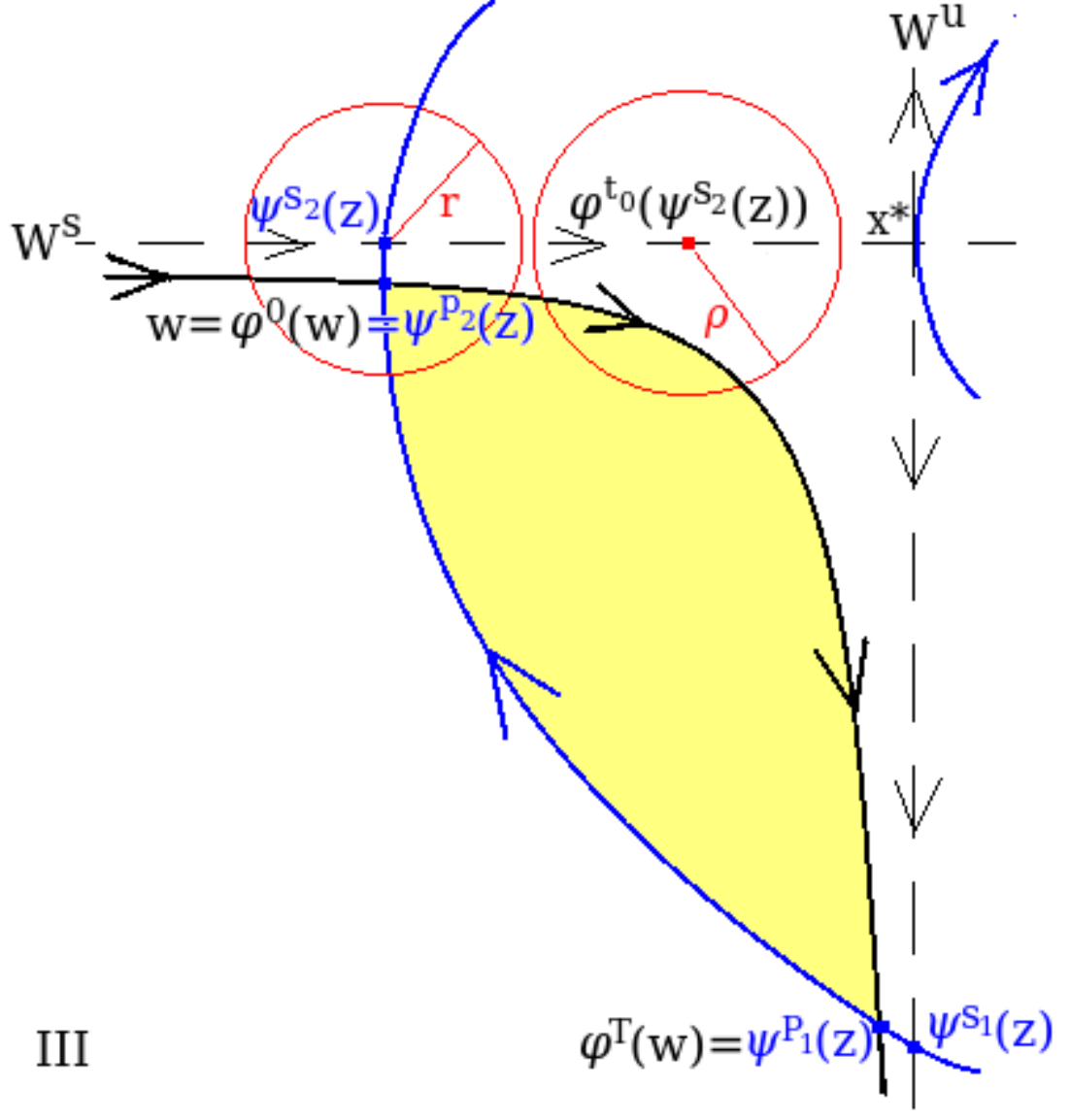}
  \caption{Proof of Theorem \ref{thm:saddle_collinear-chaotic_set} - the third scheme of part (\ref{proof:g_neq_mu.f})}
  \label{fig:proof_admitted_chaos_theorem_3}
\end{figure}
Hence there exist a simple path from $w$ to $w$ which consists of two arcs $\{ \varphi^t(w):0 \leq t \leq T \}$ and $\{\psi^s(z):p_1 \leq s \leq p_2 \}$, where $s_1<p_1<p_2<s_2$, $w=\varphi^0(w)=\psi^{p_2}(z)$ and $\varphi^T(w)=\psi^{p_1}(z)$. Then according Lemma \ref{lemma_chaotic_set_in_R2} $F$ admits chaotic set $V$ (i.e. yellow area on Figure \ref{fig:proof_admitted_chaos_theorem_3}). Like this constructed chaotic set $V$ has obviously non-empty interior (has non-empty bounded component, see Lemma \ref{lemma_chaotic_set_in_R2}). The proof for $g(x^*) = \alpha e_1$ is analogous.\\
Ad (\ref{proof:g_eq_mu.f}) Assume $x^*$ is unstable saddle point, $g(x^*) = \alpha e_1$ or $g(x^*) = \beta e_2$, where $\alpha,\beta \in \mathbb{R} \setminus \{0\}$, and $g(x) = \mu f(x)$ for every $x \in \{ W^s \cap \bar{B}_{\varepsilon}(x^*) \} \setminus \{x^*\}$ or $x \in \{ W^u \cap \bar{B}_{\varepsilon}(x^*) \} \setminus \{x^*\}$ for some $\mu \in \mathbb{R} \setminus \{0\}$ ($\mu$ can be different for each $x$). We consider $g(x^*)=\alpha e_1$ (corresponding to $W^s$). The condition $g(x^*)=\alpha e_1$ and $g(x) = \mu f(x)$ for every $x \in \{ W^s \cap \bar{B}_{\varepsilon}(x^*) \} \setminus \{x^*\}$ for some $\mu \in \mathbb{R} \setminus \{0\}$ imply that $\psi^t(x^*)$ overlaps $W^s$ in $\bar{B}_{\varepsilon}(x^*)$, see Figure \ref{fig:proof_admitted_chaos_theorem_4}.
\begin{figure}[ht]
  \centering
  \includegraphics[height=6.5cm]{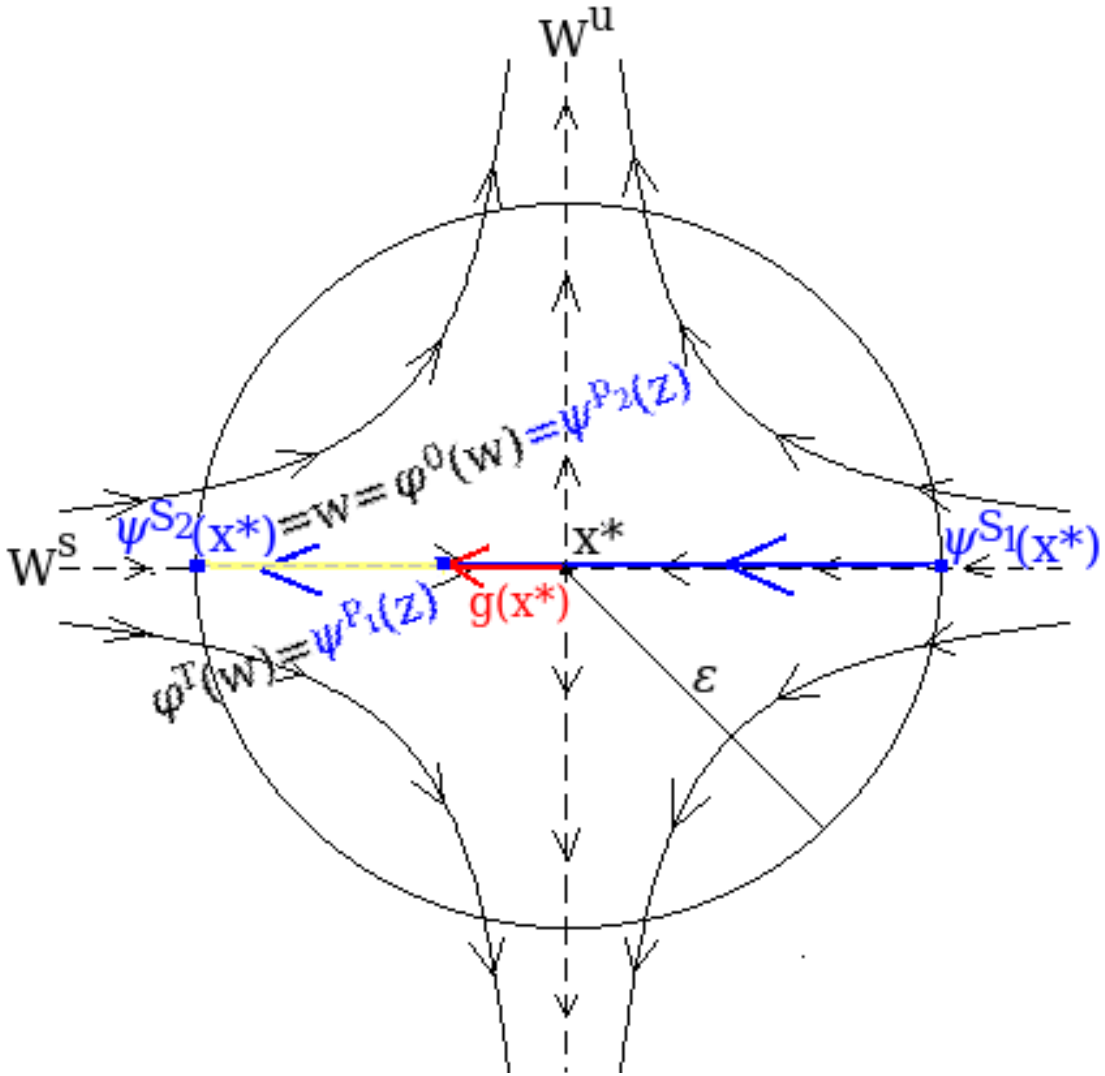}
  \caption{Proof of Theorem \ref{thm:saddle_collinear-chaotic_set} - the scheme of part (\ref{proof:g_eq_mu.f})}
  \label{fig:proof_admitted_chaos_theorem_4}
\end{figure}
Thus, let $S_1<0$ be maximal and $S_2>0$ be minimal such that $\psi^{S_1}(x^*),\psi^{S_2}(x^*) \in \partial B_{\varepsilon}(x^*) \cap W^s$, see Figure \ref{fig:proof_admitted_chaos_theorem_4}. Let $w:= \psi^{S_2}(x^*)$. Thus there exist $T>0$ such that $\varphi^T(w) \in \{ \psi^s(x^*): 0 < s < S_2 \}$, see Figure \ref{fig:proof_admitted_chaos_theorem_4}. Hence there exist a simple path from $w$ to $w$ which consists of two arcs $\{ \varphi^t(w):0 \leq t \leq T \}$ and $\{\psi^s(x^*):p_1 \leq s \leq p_2 \}$, where $0<p_1<p_2=S_2$, $w=\varphi^0(w)=\psi^{p_2}(x^*)$ and $\varphi^T(w)=\psi^{p_1}(x^*)$. Then according Lemma \ref{lemma_chaotic_set_in_R2} $F$ admits chaotic set $V$ (i.e. yellow line segment on Figure \ref{fig:proof_admitted_chaos_theorem_4}). Like this constructed chaotic set $V$ is obviously homeomorphic to $[0,1]$ with $\|f(x)\| \|g(x)\|>0$ and $\theta:=cos^{-1}\left( \frac{f(x) \cdot g(x)}{\|f(x)\| \|g(x)\|} \right)=\pi$ for all $x \in V$. The proof for $g(x^*) = \beta e_2$ is analogous with reverse time direction and with $w:=\psi^{T_1}(x^*)$.
\end{proof}

\begin{rmk}
\label{rmk:to_thm-saddle_collinear-chaotic_set}
Even though we construct only a chaotic set homeomorphic to $[0,1]$ with $\|f(x)\| \|g(x)\|>0$ and $\theta:=cos^{-1}\left( \frac{f(x) \cdot g(x)}{\|f(x)\| \|g(x)\|} \right)=\pi$ for all $x \in V$ in the part (\ref{proof:g_eq_mu.f}) of the proof of Theorem \ref{thm:saddle_collinear-chaotic_set}, there can exist also a chaotic sets with non-empty interior in the cases describing below. Let further $y^* \in X \subset \mathbb{R}^2$, $g(y^*)=0$, $\tilde{\lambda}_1,~\tilde{\lambda}_2$ eigenvalues of the Jacobi's matrix of $g$ in the $y^*$ with corresponding eigenvectors $\tilde{e}_1,~\tilde{e}_2$. \\
For node type of the second singular point $y^*$ the chaotic set with non-empty interior exists if $\psi^t(x^*)$ overlaps the whole $W^s$ or $W^u$ since the point $y^*$. We show this for $\tilde \lambda_1<0,~\tilde \lambda_2<0$. Hence $g(x^*) = \alpha e_1$ and $y^* \in W^s$, see Figure \ref{fig:remark_to_proof_admitted_chaos_theorem_1}.
\begin{figure}[ht]
  \centering
  \includegraphics[height=6.5cm]{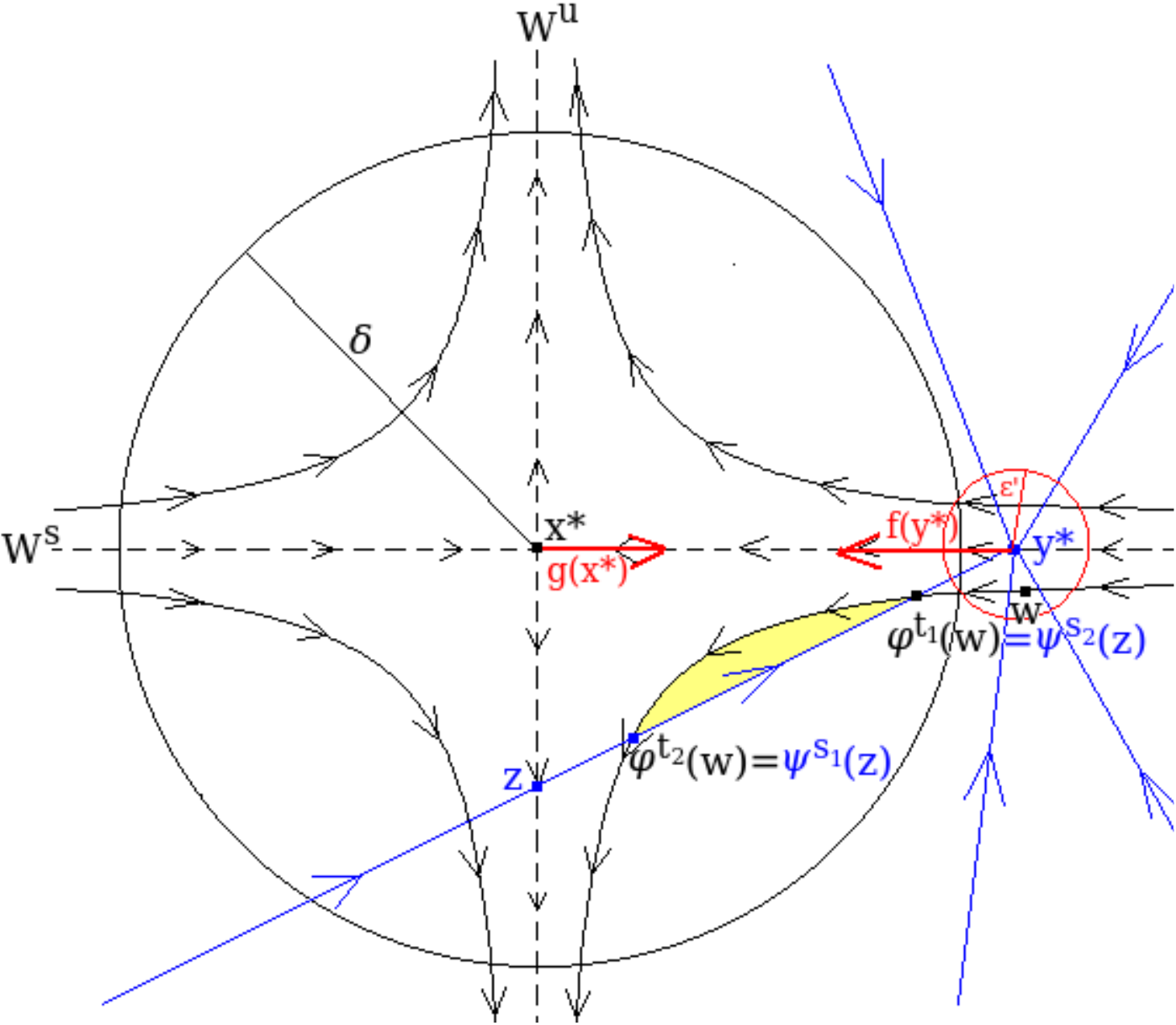}
  \caption{Construction scheme of chaotic set with non-empty interior in part (\ref{proof:g_eq_mu.f}) of proof of Theorem \ref{thm:saddle_collinear-chaotic_set} for $y^*$ node}
  \label{fig:remark_to_proof_admitted_chaos_theorem_1}
\end{figure}
Then we choose $0<\varepsilon'<<\delta$. Thus for sufficiently large $B_{\delta}(x^*)$ (means sufficiently large $\delta>0$) there exist $\{ \varphi^{t}(w):t_1 \leq t \leq t_2 \} \in B_{\delta}(x^*)$ for some $0<t_1<t_2$, $w \in B_{\varepsilon'}(y^*) \setminus (B_{\delta}(x^*) \cup W^s)$ and $\{ \psi^{s}(z):s_1 \leq s \leq s_2 \} \in B_{\delta}(x^*)$ for some $0<s_1<s_2$, $z \in (W^u \cap B_{\delta}(x^*))\setminus \{x^*\}$ such that $\varphi^{t_1}(w)=\psi^{s_2}(z)$ and $\varphi^{t_2}(w)=\psi^{s_1}(z)$, see Figure \ref{fig:remark_to_proof_admitted_chaos_theorem_1}. Hence $F$ admits the chaotic set $V$ (i.e. yellow area in Figure \ref{fig:remark_to_proof_admitted_chaos_theorem_1}) consisting of two arcs $\{ \varphi^{t}(w):t_1 \leq t \leq t_2 \}$ and $\{ \psi^{s}(z):s_1 \leq s \leq s_2 \}$, where $\varphi^{t_1}(w)=\psi^{s_2}(z)$ and $\varphi^{t_2}(w)=\psi^{s_1}(z)$, and its (non-empty) interior. The construction for $\tilde \lambda_1>0,~\tilde \lambda_2>0$ is analogous.\\
For saddle type of the second singular point $y^*$ the chaotic set with non-empty interior exists if stable manifold $W^s$ corresponding to the saddle $x^*$ overlaps the whole unstable manifold $\widetilde W^u$ corresponding to the saddle $y^*$ since the point $y^*$ and the unstable manifold $W^u$ corresponding to $f$ intersects the stable manifold $\widetilde W^s$ corresponding to $g$. And vice versa. We consider the first possibility. The construction for the second possibility is analogous. Hence $g(x^*) = \alpha e_1$ and $y^* \in W^s(=\widetilde W^u)$. Let the intersection of $W^u$ and $\widetilde W^s$ denote $z$, see Figure \ref{fig:remark_to_proof_admitted_chaos_theorem_2}. Let $\triangle x^*y^*z$ denote the "triangle" given by the points $x^*$, $y^*$ and $z$.
\begin{figure}[ht]
  \centering
  \includegraphics[height=7cm]{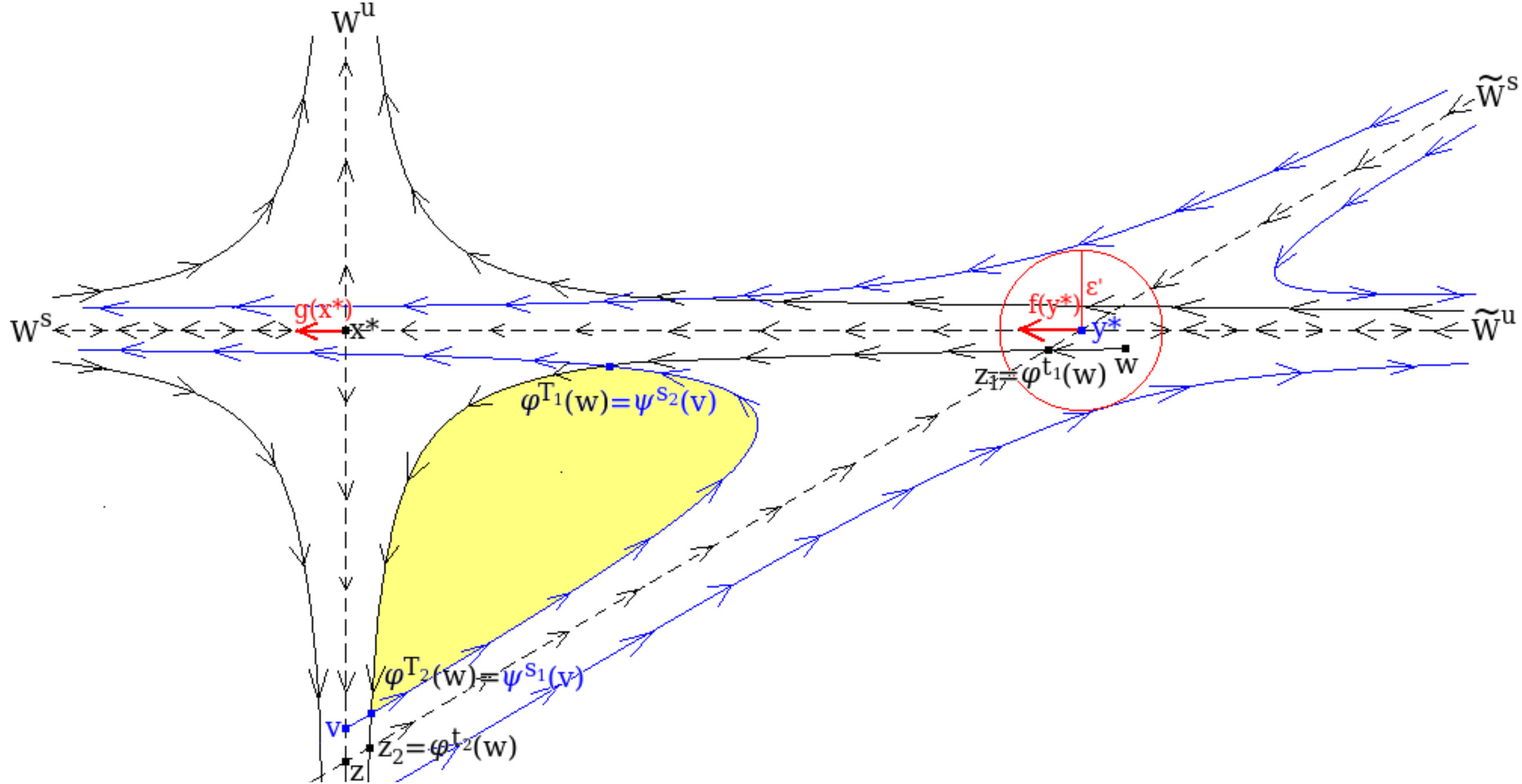}
  \caption{Construction scheme of chaotic set with non-empty interior in part (\ref{proof:g_eq_mu.f}) of proof of Theorem \ref{thm:saddle_collinear-chaotic_set} for $y^*$ saddle}
  \label{fig:remark_to_proof_admitted_chaos_theorem_2}
\end{figure}
Then we choose $0<\varepsilon'<<d(x^*,y^*)$. Thus for sufficiently large $\delta>0$ there exist $z_1,z_2 \in \widetilde W^s$ such that there exist $\varphi^{t_1}(w)=z_1$ and $\varphi^{t_2}(w)=z_2$ for some $0<t_1<t_2$ and some $w \in B_{\varepsilon'}(y^*) \setminus (\triangle x^*y^*z \cup \widetilde W^s \cup \widetilde W^u)$, see Figure \ref{fig:remark_to_proof_admitted_chaos_theorem_2}. Then there certainly exists $\{\psi^s(v):s>0\}$ for some $v \in W^s$ (corresponding to $f$) such that $\psi^{s_1}(v),\psi^{s_2}(v) \in \{\varphi^t(w):t_1<t<t_2\}$ for some $0<s_1<s_2$ and $\psi^{s_1}(v)=\varphi^{T_2}(w),~\psi^{s_2}(v)=\varphi^{T_1}(w)$ for some $t_1<T_1<T_2<t_2$, see Figure \ref{fig:remark_to_proof_admitted_chaos_theorem_2}. Thus $F$ admits the chaotic set (i.e. yellow area on Figure \ref{fig:remark_to_proof_admitted_chaos_theorem_2}) consisting of two arcs $\{ \varphi^{t}(w):T_1 \leq t \leq T_2 \}$ and $\{ \psi^{s}(v):s_1 \leq s \leq s_2 \}$, where $\varphi^{T_1}(w)=\psi^{s_2}(v)$ and $\varphi^{T_2}(w)=\psi^{s_1}(v)$, and its (non-empty) interior. \\
If type of the second singular point $y^*$ is focus, then there always exists chaotic set with non-empty interior. The focus type of singular point naturally ensures such chaotic set, see yellow area on Figure \ref{fig:remark_to_proof_admitted_chaos_theorem_3}.
\begin{figure}[ht]
  \centering
  \includegraphics[height=5.5cm]{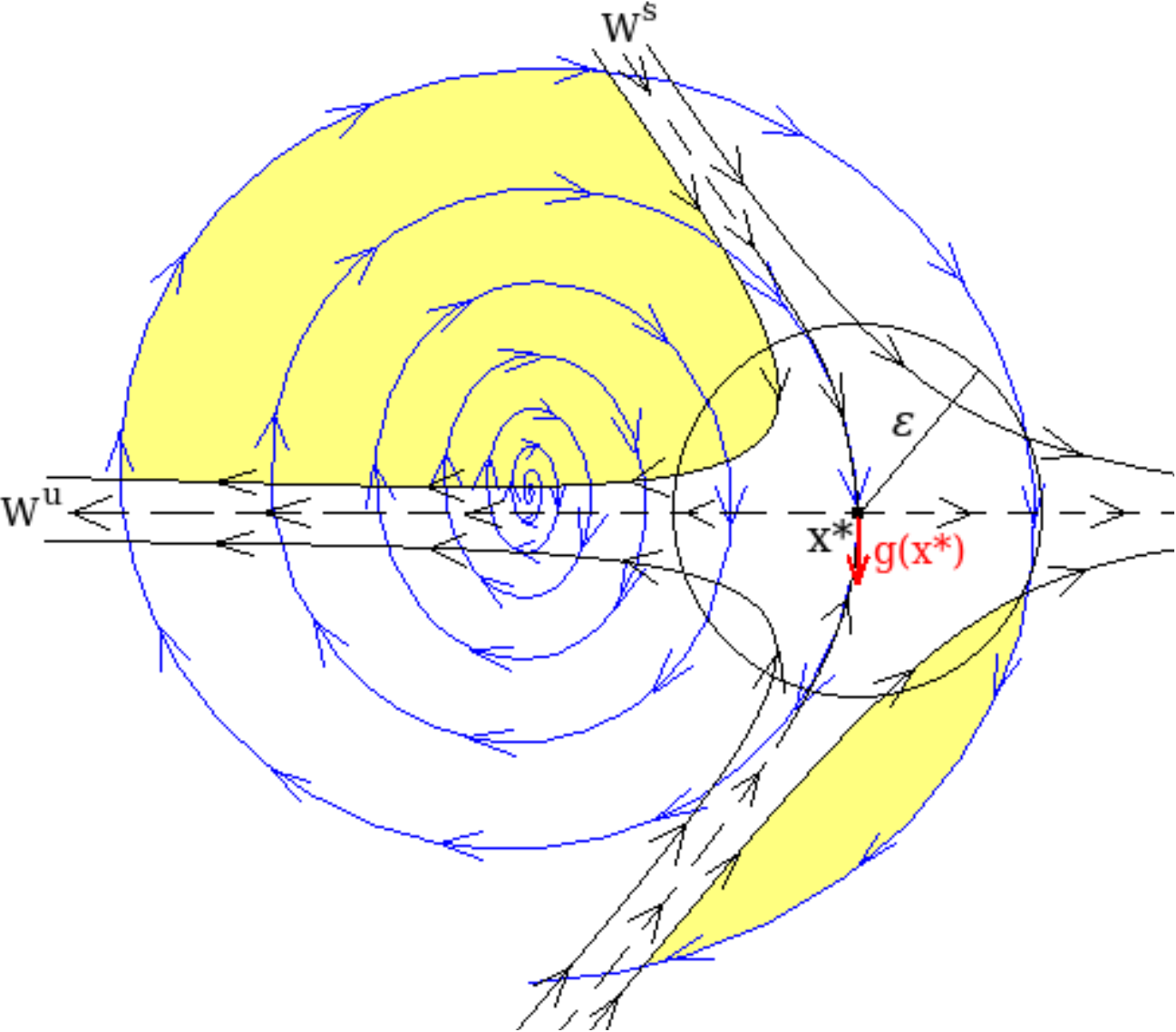}
  \caption{Construction scheme of chaotic set with non-empty interior in part (\ref{proof:g_eq_mu.f}) of proof of Theorem \ref{thm:saddle_collinear-chaotic_set} for $y^*$ focus}
  \label{fig:remark_to_proof_admitted_chaos_theorem_3}
\end{figure}
\\ The existence of chaotic set with non-empty interior in the case describing by the part (\ref{proof:g_eq_mu.f}) of the proof of Theorem \ref{thm:saddle_collinear-chaotic_set} is provided by appropriate trajectory corresponding to $g$ which has to intersect first in time $W^u$ and then $W^s$ in non-degenerated case (see Figure \ref{fig:proof_admitted_chaos_theorem_3}) or which has to intersect the separate not overlapped manifold in two points in degenerated case (see Figure \ref{fig:remark_to_proof_admitted_chaos_theorem_2}).  
\end{rmk}

In the Table \ref{tab:overview_hyp_sing_points_saddle_collinear} we illustrate all remaining combinations of hyperbolic singular points where $g(x^*) = \alpha e_1$ or $g(x^*) = \beta e_2$. There are references to the part of the proof of previous Theorem \ref{thm:saddle_collinear-chaotic_set} which proves the possible existence of chaotic sets, and thus Devaney, Li-Yorke and distributional chaos according to relevant theorems, and links to the corresponding figures where the chaotic sets are indicated. There is also description how the mutual positions of these pairs of hyperbolic singular points are on next illustrative Figures \ref{fig:unstable_node_unstable_saddle_3}-\ref{fig:unstable_saddle_unstable_saddle_8}.

\begin{table}[ht]
\begin{center}
\begin{tabular}{|l|l|l|l|}
\hline
Branches $\dot x = f(x)$ and $\dot x = g(x)$ & Proof & Fig.\\
\hline \hline
\textit{unstable saddle - unstable node}: &&\\
- node lies outside manifolds & Theorem \ref{thm:saddle_collinear-chaotic_set} (\ref{proof:g_neq_mu.f}), \ref{thm:chaotic_set-Devaney_chaos}, \ref{thm:chaotic_set_non_empty_interior-Li-Yorke_distributional_chaos} & \ref{fig:unstable_node_unstable_saddle_3}\\
- node lies on unstable manifold & Remark \ref{rmk:to_thm-saddle_collinear-chaotic_set}, Theorem \ref{thm:saddle_collinear-chaotic_set} (\ref{proof:g_eq_mu.f}), \ref{thm:chaotic_set-Devaney_chaos}, \ref{thm:chaotic_set_non_empty_interior-Li-Yorke_distributional_chaos}, \ref{thm:chaotic_set_empty_interior-Li-Yorke_distributional_chaos} & \ref{fig:unstable_node_unstable_saddle_4} \\
- node lies on stable manifold & Theorem \ref{thm:saddle_collinear-chaotic_set} (\ref{proof:g_eq_mu.f}), \ref{thm:chaotic_set-Devaney_chaos}, \ref{thm:chaotic_set_empty_interior-Li-Yorke_distributional_chaos} & \ref{fig:unstable_node_unstable_saddle_5} \\
\hline
\textit{unstable saddle - stable node}:  &&\\
- node lies outside manifolds & Theorem \ref{thm:saddle_collinear-chaotic_set} (\ref{proof:g_neq_mu.f}), \ref{thm:chaotic_set-Devaney_chaos}, \ref{thm:chaotic_set_non_empty_interior-Li-Yorke_distributional_chaos} & \ref{fig:stable_node_unstable_saddle_3} \\
- node lies on stable manifold & Remark \ref{rmk:to_thm-saddle_collinear-chaotic_set}, Theorem \ref{thm:saddle_collinear-chaotic_set} (\ref{proof:g_eq_mu.f}), \ref{thm:chaotic_set-Devaney_chaos}, \ref{thm:chaotic_set_non_empty_interior-Li-Yorke_distributional_chaos}, \ref{thm:chaotic_set_empty_interior-Li-Yorke_distributional_chaos} & \ref{fig:stable_node_unstable_saddle_4} \\
- node lies on unstable manifold & Theorem \ref{thm:saddle_collinear-chaotic_set} (\ref{proof:g_eq_mu.f}), \ref{thm:chaotic_set-Devaney_chaos}, \ref{thm:chaotic_set_empty_interior-Li-Yorke_distributional_chaos} & \ref{fig:stable_node_unstable_saddle_5} \\
\hline
\textit{unstable saddle - unstable focus} & Theorem \ref{thm:saddle_collinear-chaotic_set} (\ref{proof:g_neq_mu.f}), \ref{thm:chaotic_set-Devaney_chaos}, \ref{thm:chaotic_set_non_empty_interior-Li-Yorke_distributional_chaos} & \ref{fig:unstable_focus_unstable_saddle_2} \\
& Remark \ref{rmk:to_thm-saddle_collinear-chaotic_set}, Theorem \ref{thm:saddle_collinear-chaotic_set} (\ref{proof:g_eq_mu.f}), \ref{thm:chaotic_set-Devaney_chaos}, \ref{thm:chaotic_set_non_empty_interior-Li-Yorke_distributional_chaos}, \ref{thm:chaotic_set_empty_interior-Li-Yorke_distributional_chaos} & \ref{fig:unstable_focus_unstable_saddle_3} \\
\hline
\textit{unstable saddle - stable focus} & Theorem \ref{thm:saddle_collinear-chaotic_set} (\ref{proof:g_neq_mu.f}), \ref{thm:chaotic_set-Devaney_chaos}, \ref{thm:chaotic_set_non_empty_interior-Li-Yorke_distributional_chaos}& \ref{fig:stable_focus_unstable_saddle_2} \\
& Remark \ref{rmk:to_thm-saddle_collinear-chaotic_set}, Theorem \ref{thm:saddle_collinear-chaotic_set} (\ref{proof:g_eq_mu.f}), \ref{thm:chaotic_set-Devaney_chaos}, \ref{thm:chaotic_set_non_empty_interior-Li-Yorke_distributional_chaos}, \ref{thm:chaotic_set_empty_interior-Li-Yorke_distributional_chaos} & \ref{fig:stable_focus_unstable_saddle_3} \\
\hline
\textit{unstable saddle - unstable saddle}: &&\\
- manifolds are not overlapped & Theorem \ref{thm:saddle_collinear-chaotic_set} (\ref{proof:g_neq_mu.f}), \ref{thm:chaotic_set-Devaney_chaos}, \ref{thm:chaotic_set_non_empty_interior-Li-Yorke_distributional_chaos} & \ref{fig:unstable_saddle_unstable_saddle_3_4} \\
- stable manifold overlaps & Remark \ref{rmk:to_thm-saddle_collinear-chaotic_set}, Theorem \ref{thm:saddle_collinear-chaotic_set} (\ref{proof:g_eq_mu.f}), \ref{thm:chaotic_set-Devaney_chaos}, \ref{thm:chaotic_set_non_empty_interior-Li-Yorke_distributional_chaos}, \ref{thm:chaotic_set_empty_interior-Li-Yorke_distributional_chaos} & \ref{fig:unstable_saddle_unstable_saddle_5} \\
~ unstable manifold and second &&\\
~ manifolds intersect each other &&\\
- stable manifold overlaps & Theorem \ref{thm:saddle_collinear-chaotic_set} (\ref{proof:g_eq_mu.f}), \ref{thm:chaotic_set-Devaney_chaos}, \ref{thm:chaotic_set_empty_interior-Li-Yorke_distributional_chaos} & \ref{fig:unstable_saddle_unstable_saddle_6} \\
~ unstable manifold and second &&\\
~ manifolds do not intersect &&\\
- stable manifolds are overlapped & Theorem \ref{thm:saddle_collinear-chaotic_set} (\ref{proof:g_eq_mu.f}), \ref{thm:chaotic_set-Devaney_chaos}, \ref{thm:chaotic_set_empty_interior-Li-Yorke_distributional_chaos} &  \ref{fig:unstable_saddle_unstable_saddle_7} \\
- unstable manifolds are overlapped & Theorem \ref{thm:saddle_collinear-chaotic_set} (\ref{proof:g_eq_mu.f}), \ref{thm:chaotic_set-Devaney_chaos}, \ref{thm:chaotic_set_empty_interior-Li-Yorke_distributional_chaos} &  \ref{fig:unstable_saddle_unstable_saddle_8} \\
\hline
\end{tabular}
\vspace{0.3cm}
\caption{Overview of hyperbolic singular points combinations with saddle point fulfilling $g(x^*) = \alpha e_1$ or $g(x^*) = \beta e_2$}
\label{tab:overview_hyp_sing_points_saddle_collinear}
\end{center}
\end{table}

On the illustrative figures we present the both situations, where the branch $f$ corresponds to the black trajectories or to the blue trajectories according to relevant theorems, similarly as above. For application of Theorem \ref{thm:saddle_collinear-chaotic_set} we must consider the blue trajectories belong to the first branch $\dot x=f(x)$, where the type of hyperbolic singular point is unstable saddle, then the black trajectories belong to the second branch $\dot x = g(x)$, where the type of hyperbolic singular point is various. The chaotic sets are figured by red hatched areas. In the red hatched area (chaotic set) we go first along the black trajectories in direction indicated by corresponding arrows, then we switch on the blue trajectories and go along these trajectories in direction indicated by corresponding arrows etc., or vice versa.

\begin{figure}[htp]
\centering
\begin{minipage}[c]{225pt}
\hspace{0.7cm}  
 \includegraphics[width=180pt]{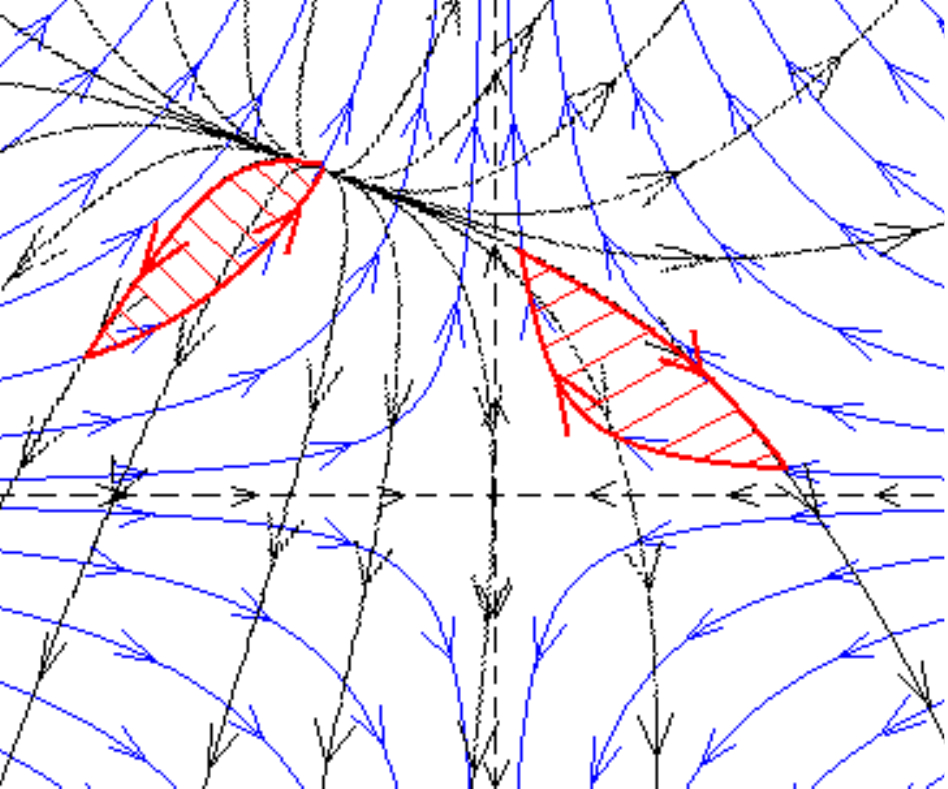}
\end{minipage} 
\begin{minipage}[c]{225pt}
\hspace{0.6cm}  
 \includegraphics[width=180pt]{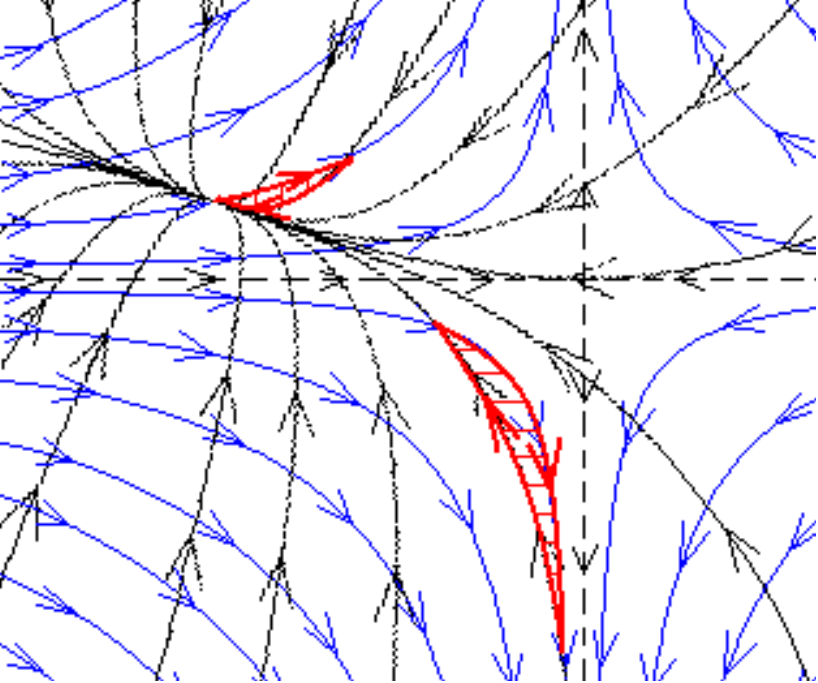}
\end{minipage}
\begin{minipage}[c]{225pt} 
 \caption{Chaotic sets with non-empty interior between unstable node and unstable saddle fulfilling condition $g(x^*)=\alpha e_1$ or $g(x^*)=\beta e_2$}
  \label{fig:unstable_node_unstable_saddle_3}
\end{minipage}  
\begin{minipage}[c]{225pt}  
   \caption{Chaotic sets with non-empty interior between stable node and unstable saddle fulfilling condition $g(x^*)=\alpha e_1$ or $g(x^*)=\beta e_2$}
  \label{fig:stable_node_unstable_saddle_3}
\end{minipage}
\\
\vspace{0.5cm}  
\begin{minipage}[c]{225pt}
\hspace{0.8cm}  
 \includegraphics[width=170pt]{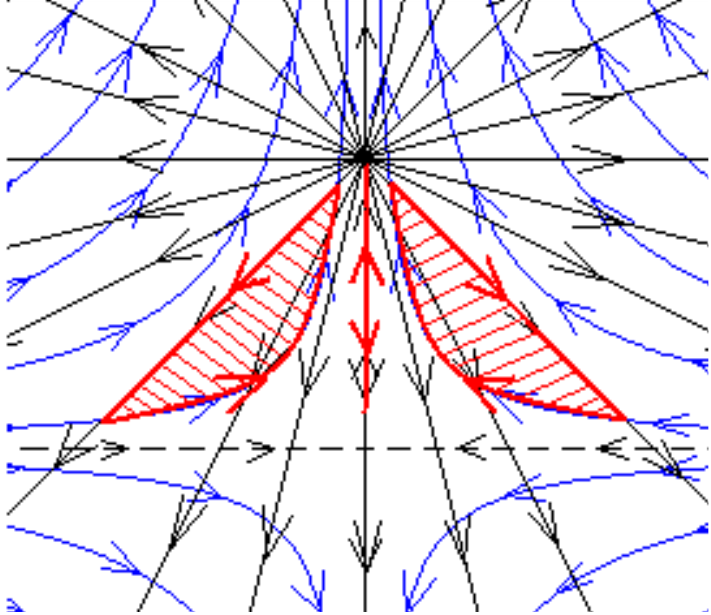}
\end{minipage} 
\begin{minipage}[c]{225pt}
\hspace{0.9cm}  
 \includegraphics[width=167pt]{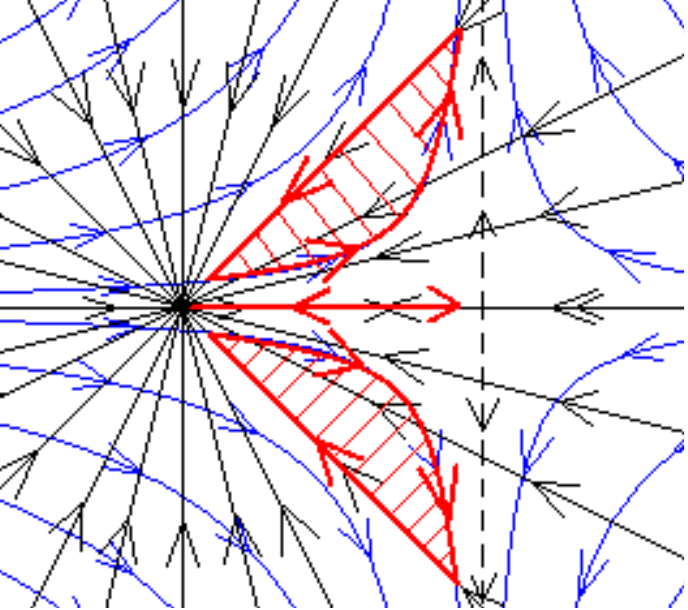}
\end{minipage}
\begin{minipage}[c]{225pt}
   \caption{Chaotic sets with non-empty interior and homeomorphic to $[0,1]$ with $\|f(x)\| \|g(x)\|>0$ and $\theta=\pi$ between unstable node and unstable saddle fulfilling $g(x^*)=\alpha e_1$ or $g(x^*)=\beta e_2$}
   \label{fig:unstable_node_unstable_saddle_4}
\end{minipage}  
\begin{minipage}[c]{225pt}  
  \caption{Chaotic sets with non-empty interior and homeomorphic to $[0,1]$ with $\|f(x)\| \|g(x)\|>0$ and $\theta=\pi$ between stable node and unstable saddle fulfilling $g(x^*)=\alpha e_1$ or $g(x^*)=\beta e_2$} 
   \label{fig:stable_node_unstable_saddle_4}
\end{minipage}
\end{figure}

\begin{figure}[htp]
\centering
\begin{minipage}[c]{225pt}
\hspace{0.8cm}  
 \includegraphics[width=170pt]{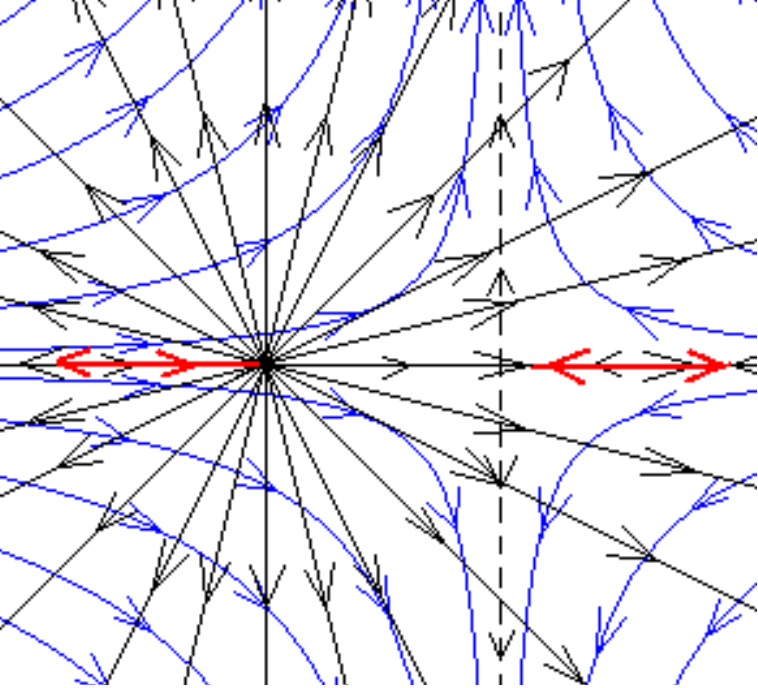}
\end{minipage} 
\begin{minipage}[c]{225pt}
\hspace{0.8cm}  
 \includegraphics[width=167pt]{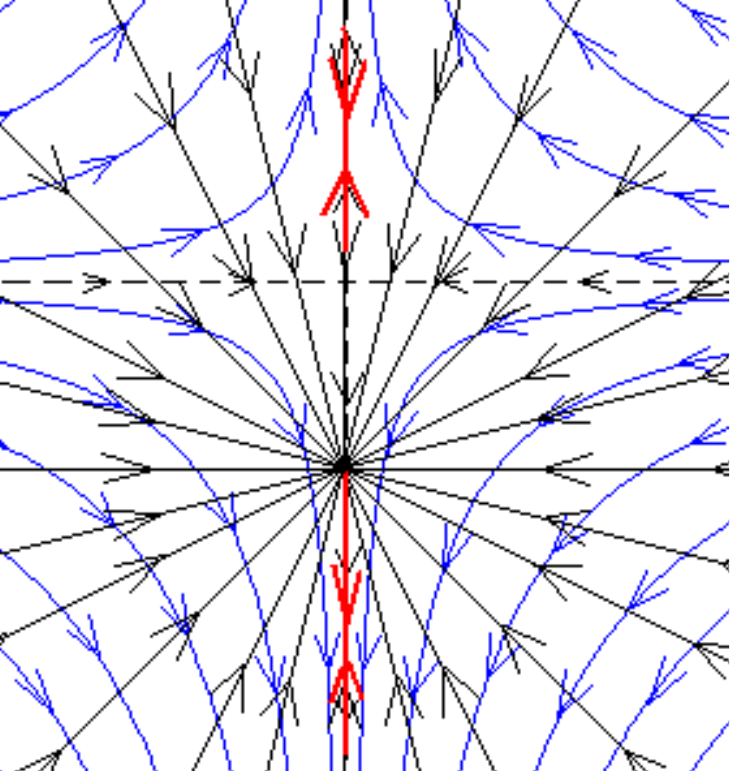}
\end{minipage}
\begin{minipage}[c]{225pt}
   \caption{Chaotic sets homeomorphic to $[0,1]$ with $\|f(x)\| \|g(x)\|>0$ and $\theta=\pi$ between unstable node and unstable saddle fulfilling $g(x^*)=\alpha e_1$ or $g(x^*)=\beta e_2$}
   \label{fig:unstable_node_unstable_saddle_5}
\end{minipage}  
\begin{minipage}[c]{225pt}  
  \caption{Chaotic sets homeomorphic to $[0,1]$ with $\|f(x)\| \|g(x)\|>0$ and $\theta=\pi$ between stable node and unstable saddle fulfilling $g(x^*)=\alpha e_1$ or $g(x^*)=\beta e_2$} 
   \label{fig:stable_node_unstable_saddle_5}
\end{minipage}
\\
\vspace{1cm}
\begin{minipage}[c]{210pt}
\hspace{0.1cm}  
  \includegraphics[width=180pt]{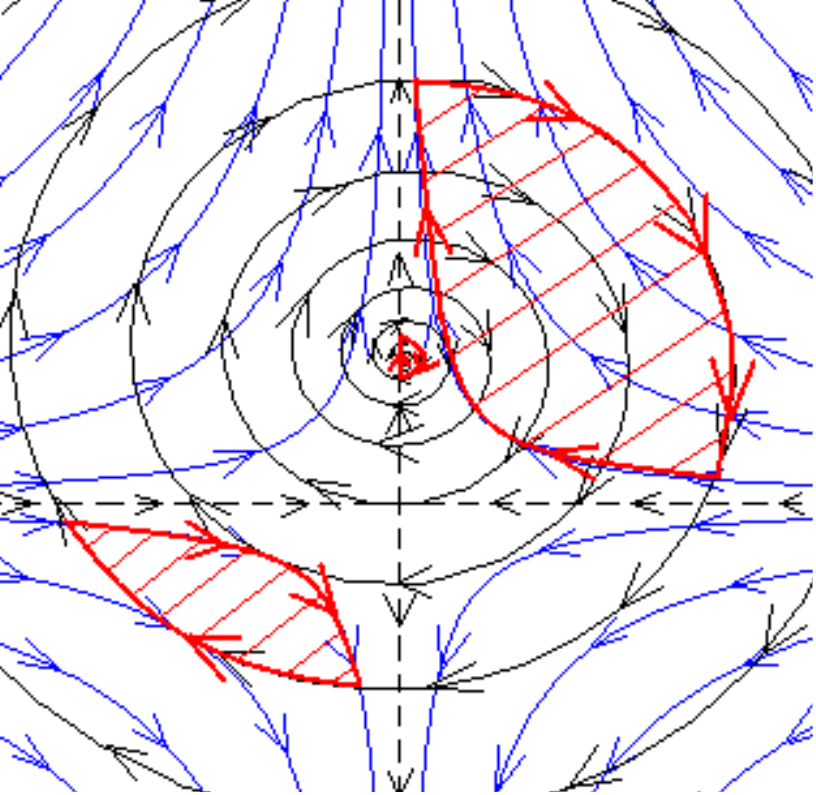}
\end{minipage}
\begin{minipage}[c]{210pt}
\hspace{0.2cm}  
 \includegraphics[width=195pt]{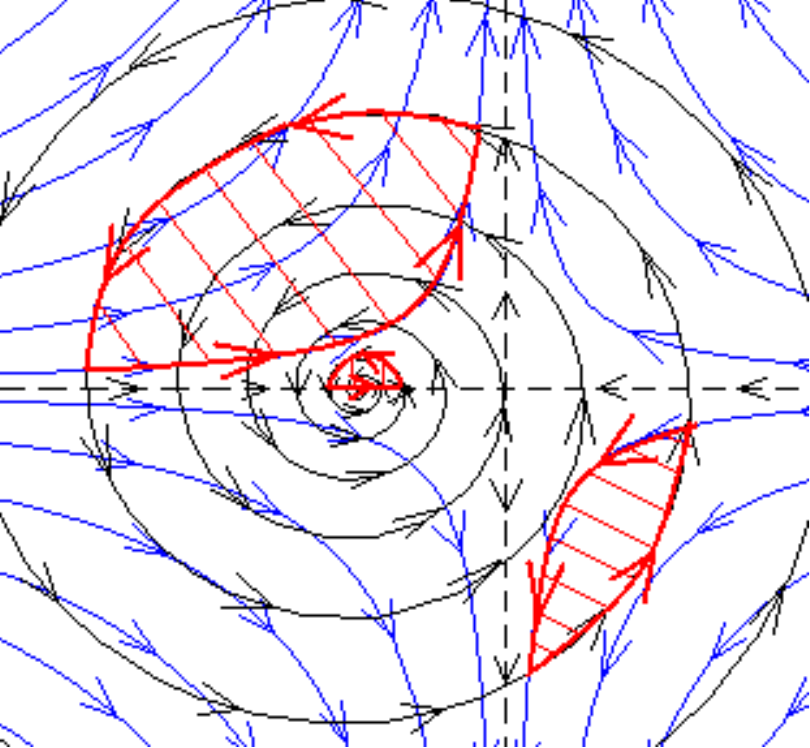}
\end{minipage}
\begin{minipage}[c]{225pt}  
  \caption{Chaotic sets with non-empty interior between unstable focus and unstable saddle fulfilling $g(x^*)=\alpha e_1$ or $g(x^*)=\beta e_2$}
  \label{fig:unstable_focus_unstable_saddle_2}
\end{minipage}
\begin{minipage}[c]{225pt}  
   \caption{Chaotic sets with non-empty interior between stable focus and unstable saddle fulfilling $g(x^*)=\alpha e_1$ or $g(x^*)=\beta e_2$}
  \label{fig:stable_focus_unstable_saddle_2}
\end{minipage}  
\end{figure}

\begin{figure}[htp]
\centering
\begin{minipage}[c]{210pt}
\hspace{0.3cm}
  \includegraphics[width=185pt]{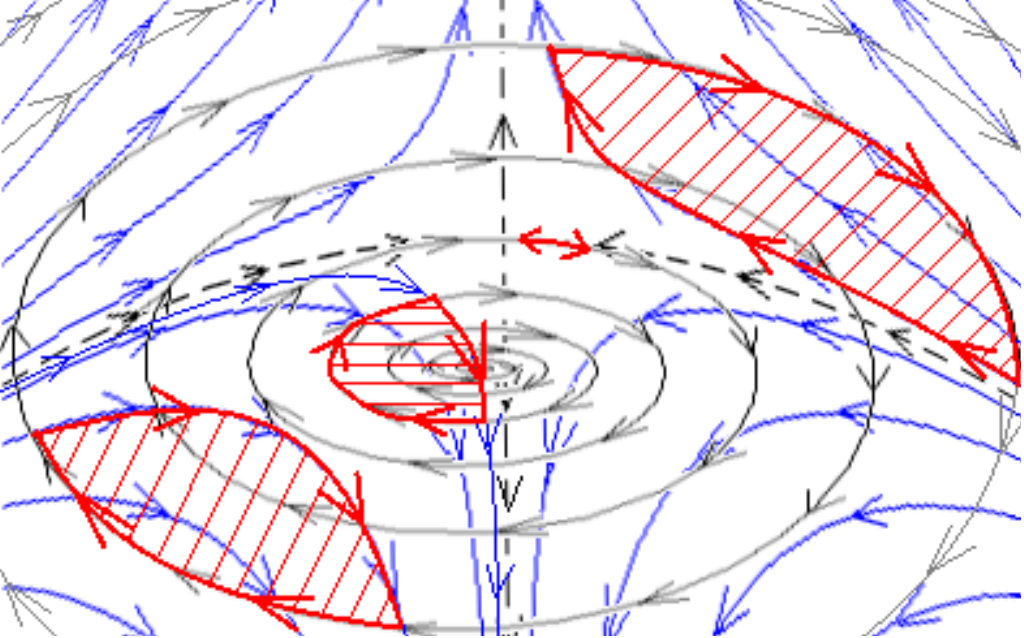}
\end{minipage}
\begin{minipage}[c]{210pt}
\hspace{0.6cm}  
 \includegraphics[width=170pt]{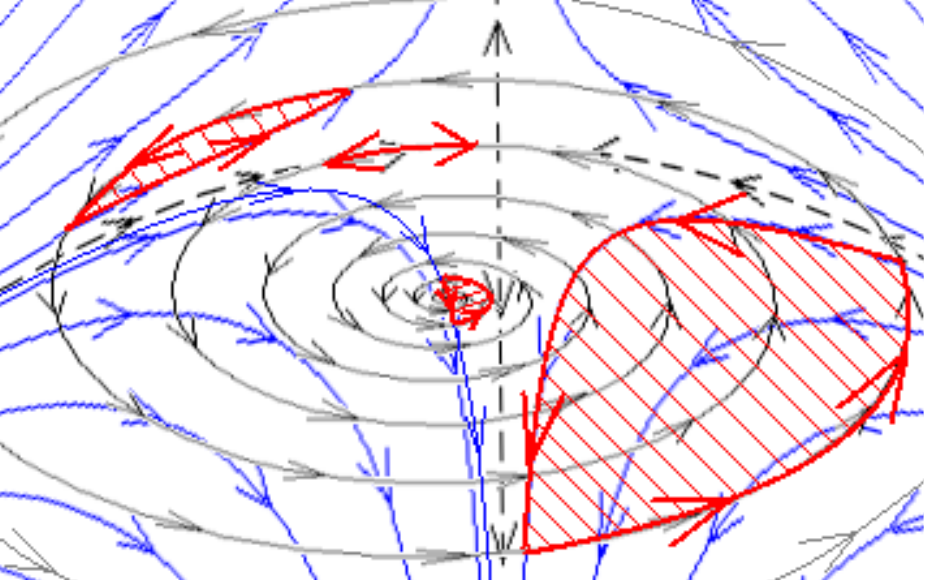}
\end{minipage}  
\begin{minipage}[c]{225pt} 
  \caption{Chaotic sets with non-empty interior and homeomorphic to $[0,1]$ with $\|f(x)\| \|g(x)\|>0$ and $\theta=\pi$ between unstable focus and unstable saddle fulfilling $g(x^*)=\alpha e_1$ or $g(x^*)=\beta e_2$}
  \label{fig:unstable_focus_unstable_saddle_3}
\end{minipage}  
\begin{minipage}[c]{225pt} 
  \caption{Chaotic sets  with non-empty interior and homeomorphic to $[0,1]$ with $\|f(x)\| \|g(x)\|>0$ and $\theta=\pi$ between stable focus and unstable saddle fulfilling $g(x^*)=\alpha e_1$ or $g(x^*)=\beta e_2$}
  \label{fig:stable_focus_unstable_saddle_3}
\end{minipage}
\\
\vspace{1cm}
\begin{minipage}[c]{225pt}
\hspace{0.5cm} 
 \includegraphics[width=190pt]{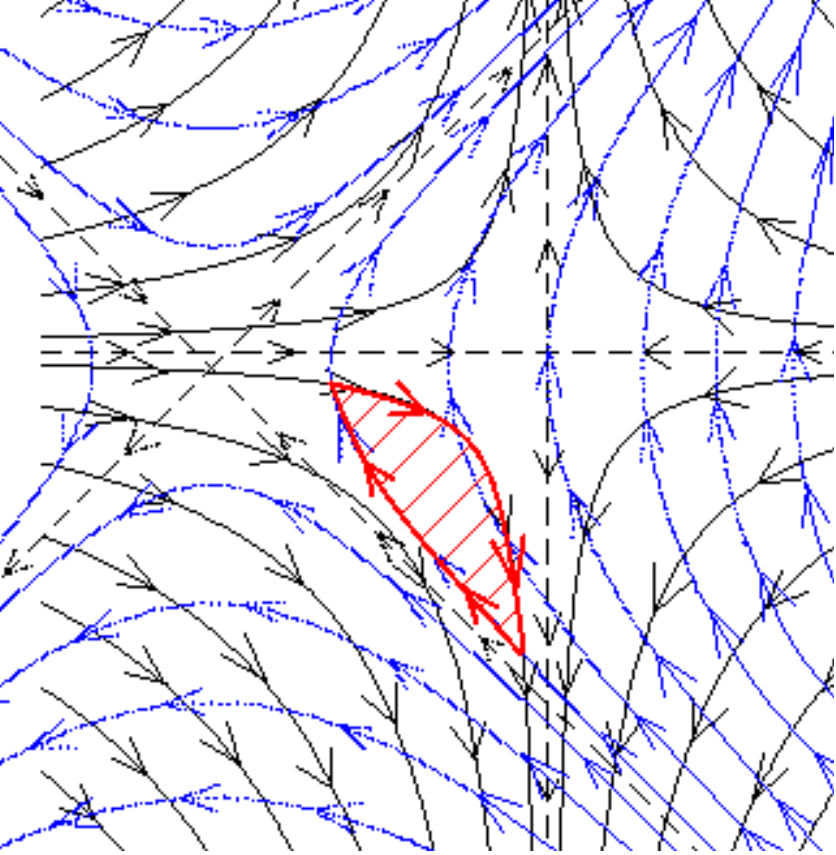}
\end{minipage} 
\begin{minipage}[c]{225pt}
\hspace{0.2cm} 
 \includegraphics[width=190pt]{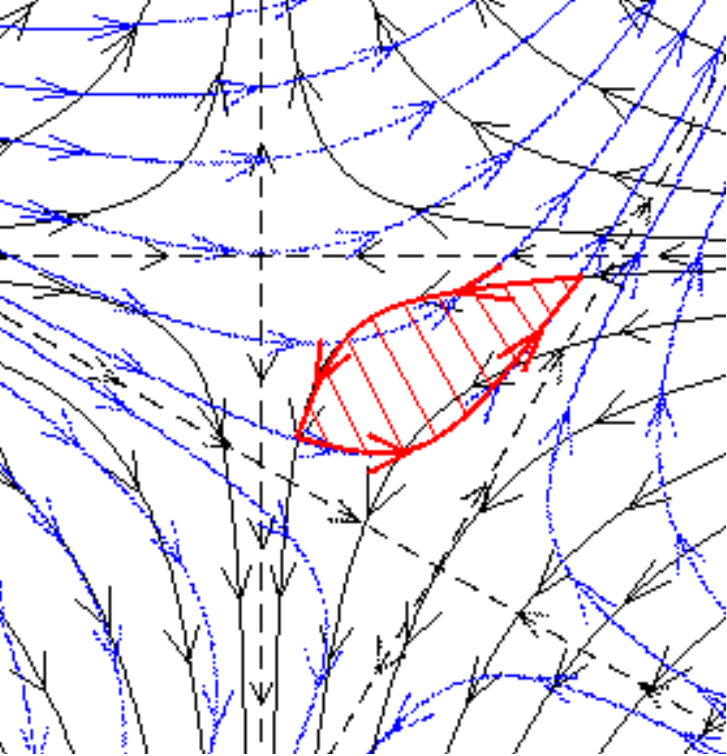}
\end{minipage}
  \caption{Chaotic set with non-empty interior between two unstable saddles fulfilling $g(x^*)=\alpha e_1$ or $g(x^*)=\beta e_2$ when stable or unstable manifolds are not overlapped}
  \label{fig:unstable_saddle_unstable_saddle_3_4}
\end{figure}

\begin{figure}
\begin{minipage}[c]{225pt}
\hspace{0.5cm}
  \includegraphics[width=183pt]{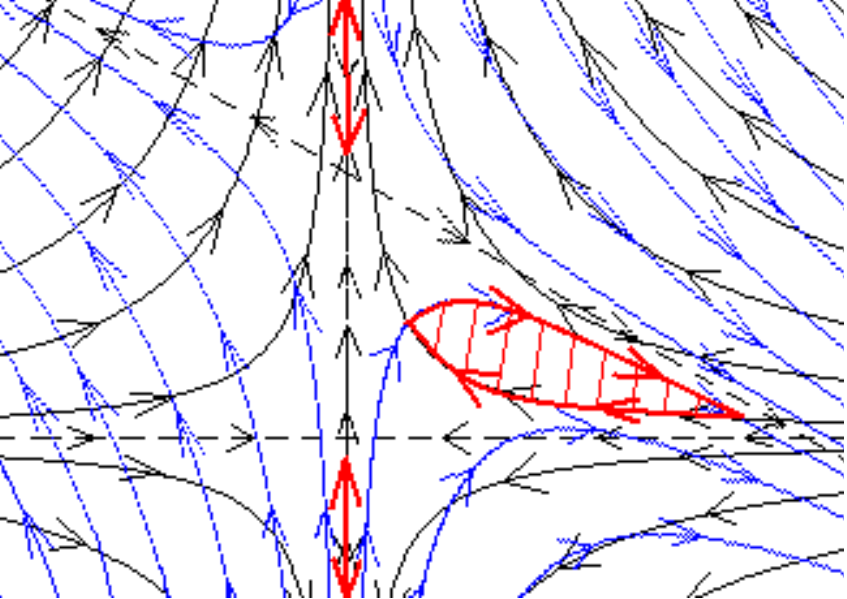}
\end{minipage}
\begin{minipage}[c]{225pt}
\hspace{0.5cm}
  \includegraphics[width=190pt]{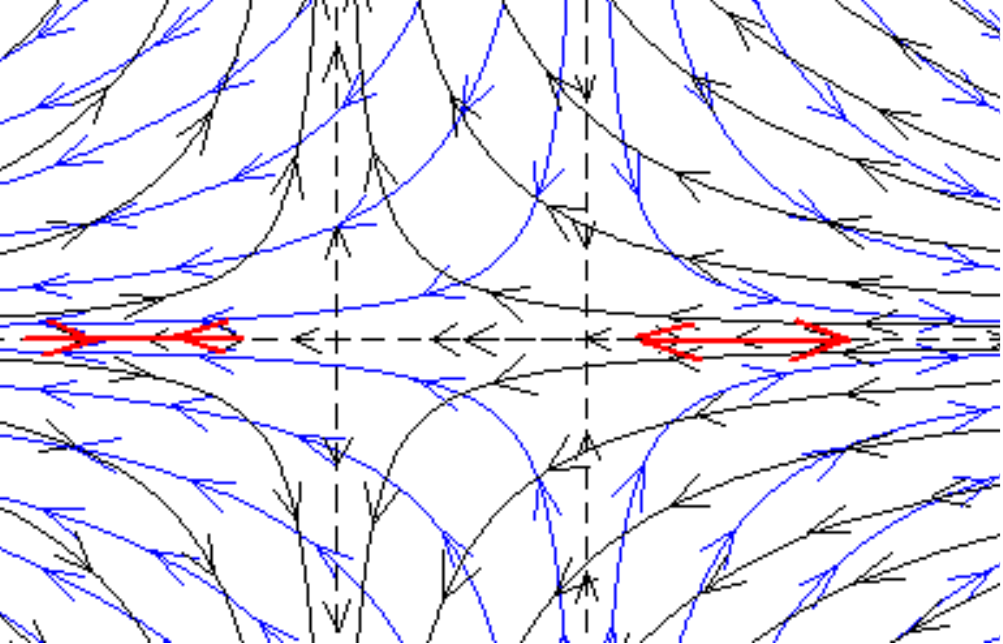}
\end{minipage} 
\begin{minipage}[c]{225pt}
  \caption{Chaotic sets  with non-empty interior and homeomorphic to $[0,1]$ with $\|f(x)\| \|g(x)\|>0$ and $\theta=\pi$ between two unstable saddles fulfilling condition $g(x^*)=\alpha e_1$ or $g(x^*)=\beta e_2$ when stable manifold overlaps unstable manifold}
  \label{fig:unstable_saddle_unstable_saddle_5}
\end{minipage}
\begin{minipage}[c]{225pt}
  \caption{Chaotic sets homeomorphic to $[0,1]$ with $\|f(x)\| \|g(x)\|>0$ and $\theta=\pi$ between two unstable saddles fulfilling condition $g(x^*)=\alpha e_1$ or $g(x^*)=\beta e_2$ when stable manifold overlaps unstable manifold}
  \label{fig:unstable_saddle_unstable_saddle_6}
\end{minipage}
\\
\vspace{0.3cm}
\begin{minipage}[c]{225pt}
\hspace{0.5cm}
  \includegraphics[width=183pt]{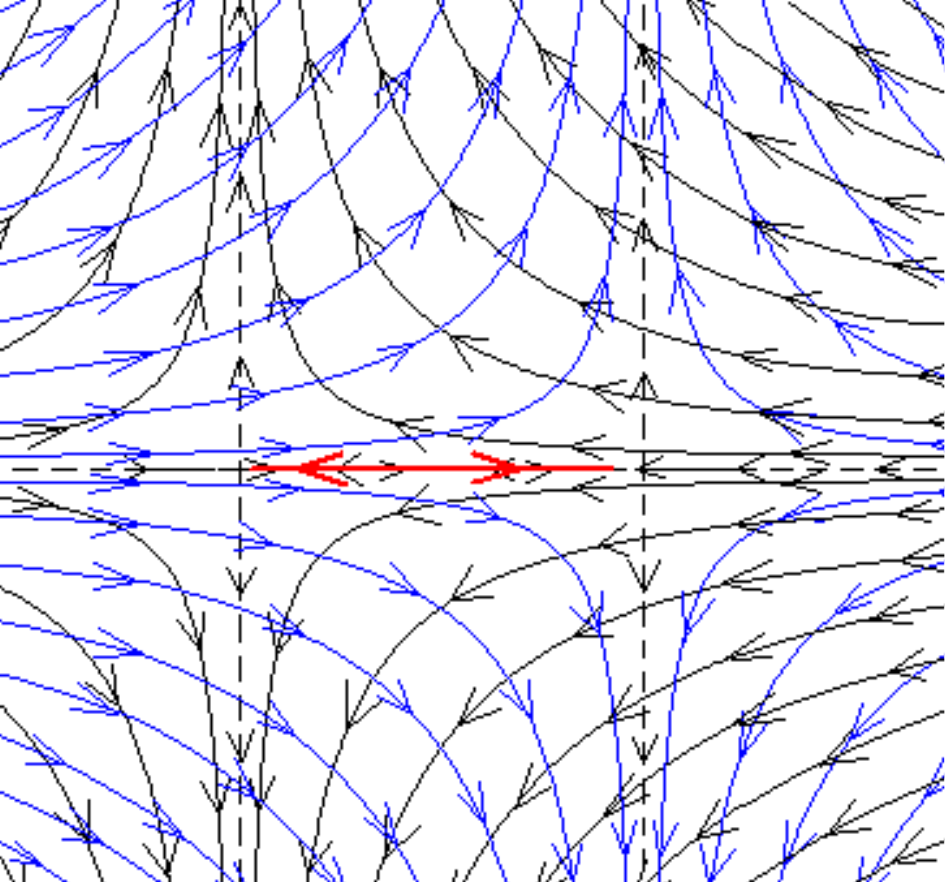}
\end{minipage}
\begin{minipage}[c]{225pt}
\hspace{0.5cm}
  \includegraphics[width=190pt]{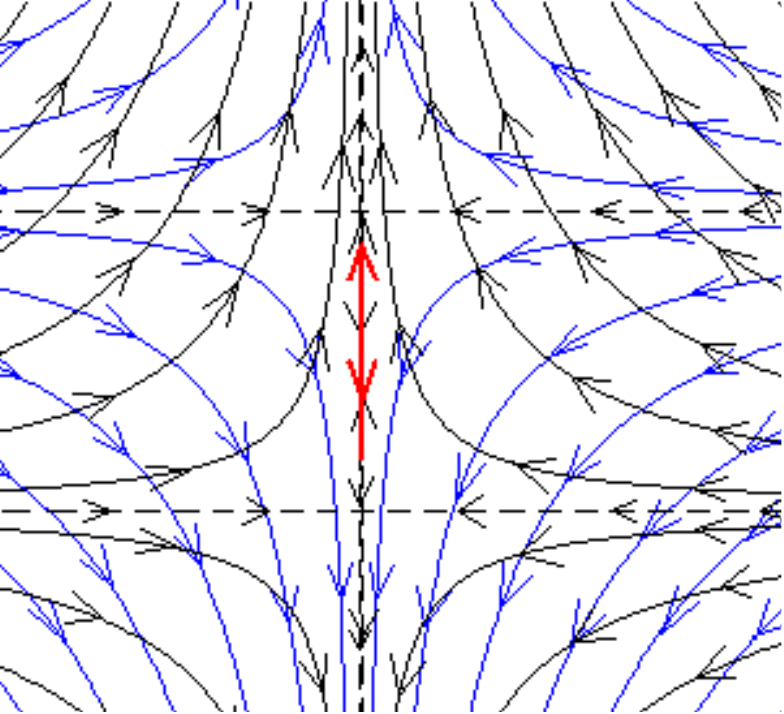}
\end{minipage} 
\begin{minipage}[c]{225pt}
  \caption{Chaotic set homeomorphic to $[0,1]$ with $\|f(x)\| \|g(x)\|>0$ and $\theta=\pi$ between two unstable saddles fulfilling condition $g(x^*)=\alpha e_1$ or $g(x^*)=\beta e_2$ when stable manifolds are overlapped}
  \label{fig:unstable_saddle_unstable_saddle_7}
\end{minipage}
\begin{minipage}[c]{225pt}
  \caption{Chaotic set homeomorphic to $[0,1]$ with $\|f(x)\| \|g(x)\|>0$ and $\theta=\pi$ between two unstable saddles fulfilling condition $g(x^*)=\alpha e_1$ or $g(x^*)=\beta e_2$ when unstable manifolds are overlapped}
  \label{fig:unstable_saddle_unstable_saddle_8}
\end{minipage}
\end{figure}

\clearpage

\subsection{Devaney, Li-Yorke and Distributional Chaotic Set of Solutions $V^*$}
\label{chaotic_set_of_solutions}

In the previous section \ref{overview_of_possibilities_admitting_chaotic_set}, we show that the chaotic set $V$ is always admitted in $\mathbb{R}^2$ with two hyperbolic singular points not lying in the same point but the existence of chaos also depends on set of solutions $V^*$ generating solutions $\gamma$ ensuring the chaotic set $V$, see Definition \ref{df:chaotic_set}.

Let us remind: the dynamical system generated by Euler equation branching $D:=\{\gamma \in Z | \dot \gamma(t) \in F(\gamma(t)) \textit{ a.e.} \}$, $Z=\{ \gamma| \gamma: T \rightarrow X \}$, $\gamma$ continuous and continuously differentiable a.e., $V \subset X \subseteq \mathbb{R}^2$ non-empty, compact $F$-invariant set and $V^*=\{ \gamma \in D|\gamma(t) \in V, \textit{ for all } t \in T \}$. The solution $\gamma$ from $D$ is composed of the part corresponding to the solution of the branch $\dot x = f(x)$, of the part corresponding to the solution of the branch $\dot x = g(x)$ and of the switching system between these two branches. For each point $x_0 \in X$ the solution $\gamma$ contains also the consequence of times $T_i,~i=0,1,2,3,...$ such that $\gamma(T_0)=\gamma(0)=x_0$ and $T_i>0$ for odd $i$ give the times of switching from the branch $\dot x = f(x)$ to $\dot x = g(x)$ and  $T_i>0$ for even $i$ give the times of switching from the branch $\dot x = g(x)$ to $\dot x = f(x)$, if we start by branch $f$, or vice versa (odd $i$ for switch from $g$ to $f$ and even $i$ for switch from $f$ to $g$), if we start by branch $g$. From the nature of such solutions follows that for each point $x_0 \in V$ there exist uncountable many solutions differing just in such switching system.

The set of solutions $V^*$ corresponding to $V \subset \mathbb{R}^2$ has to fulfil three conditions to be Devaney, Li-Yorke and distributional chaotic - every solution $\gamma \in V^*$ "stays forever in $V$" (F-invariant set $V$ and definition of $V^*$), each point $x \in V$ "can be connected with each another point" in $V$ by simple path given by some $\gamma \in V^*$ (the property (\ref{df:chaotic_set_a}) of chaotic set $V$) and there exists $\gamma \in V^*$ such that $\{ \gamma(t):t \in T \}$ is not dense in $V$ (the property (\ref{df:chaotic_set_b}) of chaotic set $V$).

\begin{thm}
\label{thm:chaotic_set_of_solutions}
In dynamical system $D$ generated by Euler equation branching in $\mathbb{R}^2$ there exists the set of solutions $V^*$ which ensures chaotic set $V$, hence this set $V^*$ is Devaney, Li-Yorke and distributional chaotic. Moreover the set of such $V^*$ is uncountable.
\end{thm}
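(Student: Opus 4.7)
The plan is to split the statement into its two assertions and handle each separately: first the mere existence of a chaotic $V^{\ast}$, then the uncountability of the family of such sets.

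For the existence part, I would simply harvest the construction already carried out in Section~\ref{overview_of_possibilities_admitting_chaotic_set}. Under the standing hypotheses of the paper, Theorem~\ref{thm:hyperb_sing_point-chaotic_set} (when $g(x^{\ast})$ is not collinear with an eigenvector) and Theorem~\ref{thm:saddle_collinear-chaotic_set} together with Remark~\ref{rmk:to_thm-saddle_collinear-chaotic_set} (in the collinear case) exhaust all configurations of two hyperbolic classical singular points sitting at distinct points of $\mathbb{R}^{2}$, and in each case exhibit a chaotic set $V$. Given such $V$, its compact $F$-invariance makes $V^{\ast}=\{\gamma\in D\mid\gamma(t)\in V,\ \forall t\in T\}$ non-empty. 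Condition~(\ref{df:chaotic_set_a}) of Definition~\ref{df:chaotic_set} is automatic from the construction, because the simple closed path produced by Lemma~\ref{lemma_chaotic_set_in_R2} lies entirely inside $V$ and is generated by $D$, so its sub-arcs provide the required simple paths between any two prescribed endpoints of $V$. Condition~(\ref{df:chaotic_set_b}) follows from the fact that at least one $\gamma\in V^{\ast}$ can be chosen to remain on the bounding arcs and avoid an open subregion enclosed by the Jordan curve. Theorems~\ref{thm:chaotic_set-Devaney_chaos}, \ref{thm:chaotic_set_non_empty_interior-Li-Yorke_distributional_chaos} and \ref{thm:chaotic_set_empty_interior-Li-Yorke_distributional_chaos} then deliver the Devaney, Li--Yorke and distributional chaoticity of $V^{\ast}$.

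For the uncountability claim, the strategy is to produce an uncountable family $\{V_{\alpha}\}_{\alpha\in A}$ of pairwise distinct chaotic sets by continuous perturbation of the construction. In the proofs of Theorems~\ref{thm:hyperb_sing_point-chaotic_set} and \ref{thm:saddle_collinear-chaotic_set} each chaotic set is assembled from a simple closed path of the form $P=\{\varphi^{t}(w):0\le t\le T\}\cup\{\psi^{s}(z):p_{1}\le s\le p_{2}\}$ and the bounded components it encloses, together with a ball $\bar{B}_{\delta}(x^{\ast})$. The base point $w$, the switching radii $\varepsilon,\varepsilon^{\prime},\delta$ and the hitting times $T,p_{1},p_{2}$ can each be varied throughout a non-degenerate open interval of admissible values: continuous dependence of the flows $\varphi^{t}$ and $\psi^{t}$ on initial conditions guarantees that the intersections used to form $P$ remain transverse after small perturbations, so Lemma~\ref{lemma_chaotic_set_in_R2} still applies and yields a new chaotic set $V_{\alpha}$ with the same qualitative picture but different point set. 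Hence the parameter space has the cardinality of the continuum, and so does the resulting family $\{V_{\alpha}\}$.

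To close the argument, I would invoke an injectivity observation: distinct chaotic sets produce distinct solution sets. Indeed, if $V_{1}\neq V_{2}$ and $x\in V_{1}\setminus V_{2}$, then by $F$-invariance of $V_{1}$ there exists $\gamma\in V_{1}^{\ast}$ with $\gamma(0)=x$; since $\gamma(0)\notin V_{2}$, one has $\gamma\notin V_{2}^{\ast}$, so $V_{1}^{\ast}\neq V_{2}^{\ast}$. Combining this with the uncountable family $\{V_{\alpha}\}$ produces an uncountable family $\{V_{\alpha}^{\ast}\}$ of pairwise distinct Devaney, Li--Yorke and distributional chaotic solution sets. The main obstacle I anticipate is the bookkeeping needed to certify that the continuous perturbation of the parameters preserves every geometric hypothesis of Lemma~\ref{lemma_chaotic_set_in_R2} across all configurations listed in Tables~\ref{tab:overview_hyp_sing_points_combinations_chaotic_sets} and \ref{tab:overview_hyp_sing_points_saddle_collinear}; this is routine but must be carried out case by case, particularly for the degenerate subcases of Remark~\ref{rmk:to_thm-saddle_collinear-chaotic_set} where the manifolds of the two branches partially overlap.
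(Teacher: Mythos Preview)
Your plan is sound, but it takes a genuinely different route from the paper in both halves of the argument. For existence, the paper does not just invoke a chaotic set $V$ from Section~\ref{overview_of_possibilities_admitting_chaotic_set} and pass to the canonical $V^{\ast}=\{\gamma\in D:\gamma(t)\in V\ \forall t\}$; rather it fixes one such $V$ and exhibits an explicit small witnessing family $V^{\ast}=\{\gamma_{0},\gamma_{1}\}$, where $\gamma_{0}$ is a single periodic switching solution running along a bounding pair of arcs $(\varphi_{B},\psi_{B})$ and $\gamma_{1}$ is the collection of solutions $\gamma_{x_{j}}$ obtained by inserting one additional $f$-arc through a prescribed point $x_{j}\in V$ between two passes along $\psi_{B}$, with all switching times $T_{i}$ written out. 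This is done separately for $V$ with non-empty interior (node, focus, saddle cases) and for $V$ homeomorphic to $[0,1]$.

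For the uncountability, the paper keeps $V$ \emph{fixed} and instead varies the switching pattern: it builds variants $\gamma_{2},\gamma_{1.2},\ldots$ by inserting different finite sequences of intermediate loops through points $x_{j},x_{k},\ldots$ (and different multiplicities of each loop), so that $\{\gamma_{0},\gamma_{2}\}$, $\{\gamma_{0},\gamma_{1.2}\}$, etc.\ are pairwise distinct chaotic solution sets on the \emph{same} $V$. Your alternative---perturbing the geometric parameters $(w,\varepsilon,\delta,T,p_{1},p_{2})$ to manufacture uncountably many distinct $V_{\alpha}$ and then taking their canonical $V_{\alpha}^{\ast}$---also works, avoids the switching-time bookkeeping, and the injectivity step $V_{1}\neq V_{2}\Rightarrow V_{1}^{\ast}\neq V_{2}^{\ast}$ is clean. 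The paper's approach, however, yields the sharper conclusion that a single fixed $V$ already supports uncountably many chaotic $V^{\ast}$; this is exactly what is used in Section~\ref{aplication_in_macroeconomics}, where $V$ is one concrete region of the $(Y,R)$-plane and the different $V^{\ast}$'s are interpreted as different economic-cycle regimes.
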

\begin{proof}
We assume that starting point $x_0$ is influenced firstly by branch $f$, i.e. the consequence of switching times  $T_i,~i=0,1,2,3,...$ is such that $\gamma(T_0)=\gamma(0)=x_0$ and $T_i>0$ for odd $i$ give the times of switching from the branch $f$ to $g$ and  $T_i>0$ for even $i$ give the times of switching from the branch $g$ to $f$. So, we consider  $x^* \in X \subset \mathbb{R}^2$, $f(x^*)=0$, $g(x^*) \neq 0$,  $\delta > 0$ such that the solution of $\dot x = g(x)$ is unbounded in $\bar{B}_{\delta}(x^*)$ and $g(x) \neq 0$ for every $x \in \bar B_{\delta}(x^*)$. If we assume an opposite situation (start in $g$), then the proof will be analogous with difference that we consider $g(y^*)=0$ and unbounded solution of $\dot x = f(x)$ in $\bar{B}_{\delta}(y^*)$ etc. Denote by $\varphi^t(x)$ and $\psi^t(x)$ the flow belonging  to $f$ and to $g$. \\
First, we construct such set of solutions $V^*$ for $V$ with non-empty interior. We initially assume $x^*$ is unstable (node, focus, saddle). We denote by $\{ \gamma_0, \gamma_1\}$ a chaotic set of solutions $V^*$. We describe the solutions $\gamma_0,\gamma_1$ using the Figure \ref{fig:proof_chaotic_set_of_solutions_theorem_1} for $x^*$ node, using the Figure \ref{fig:proof_chaotic_set_of_solutions_theorem_2} for $x^*$ focus, and using the Figure \ref{fig:proof_chaotic_set_of_solutions_theorem_3} for $x^*$ saddle. By yellow areas the sets $V$ are figured. If $x^*$ is node, the set $V$ is bounded by trajectory of $g$ denoted by $\psi_B$ arbitrary closed to the trajectory of $g$ (figured by dashed line) passing through the point $x^*$ and by arbitrary trajectory of $f$ denoted by $\varphi_B$ intersecting trajectory $\psi_B$ in two points such that for some $t_1<t_2$, $s_1<s_2$ it holds $\varphi_B^{t_1}=\psi_B^{s_2}=:z_1$ and $\varphi_B^{t_2}=\psi_B^{s_1}=:z_2$ and $\psi_B^s \notin \varphi_B $ for every $s \in (s_1,s_2)$, see Figure \ref{fig:proof_chaotic_set_of_solutions_theorem_1}.
\begin{figure}[ht]
  \centering
  \includegraphics[height=2.1cm]{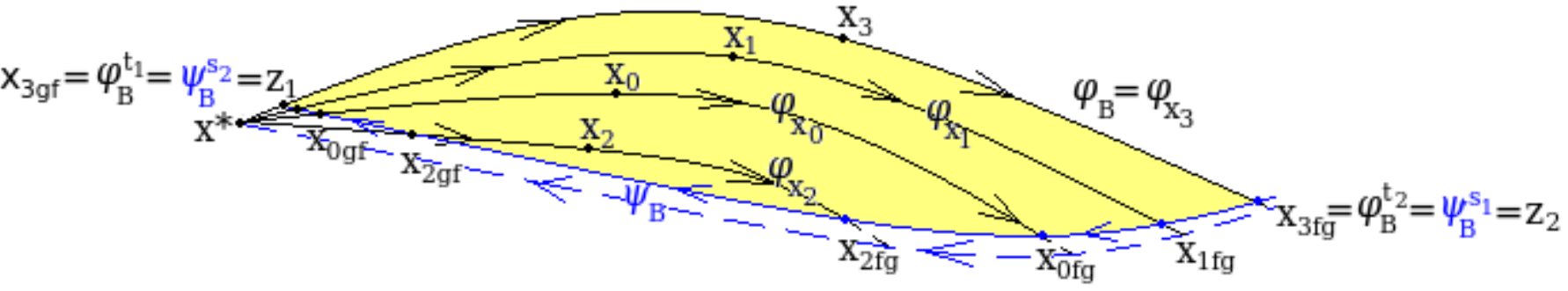}
  \caption{The first proof scheme of Theorem \ref{thm:chaotic_set_of_solutions}}
  \label{fig:proof_chaotic_set_of_solutions_theorem_1}
\end{figure}
If $x^*$ is focus, the set $V$ is bounded by trajectory of $g$ denoted by $\psi_B$ passing through the point $x^*$ and by arbitrary trajectory of $f$ denoted by $\varphi_B$ intersecting trajectory $\psi_B$ in two points such that for some $t_1<t_2$, $s_1<s_2$ it holds $\varphi_B^{t_1}=\psi_B^{s_2}=:z_1$ and $\varphi_B^{t_2}=\psi_B^{s_1}=:z_2$ and $\varphi_B^t \notin \psi_B$ for every $t \in (t_1,t_2)$ and $\psi_B^s \notin \varphi_B $ for every $s \in (s_1,s_2)$, see Figure  \ref{fig:proof_chaotic_set_of_solutions_theorem_2}.
\begin{figure}[ht]
  \centering
  \includegraphics[height=4.2cm]{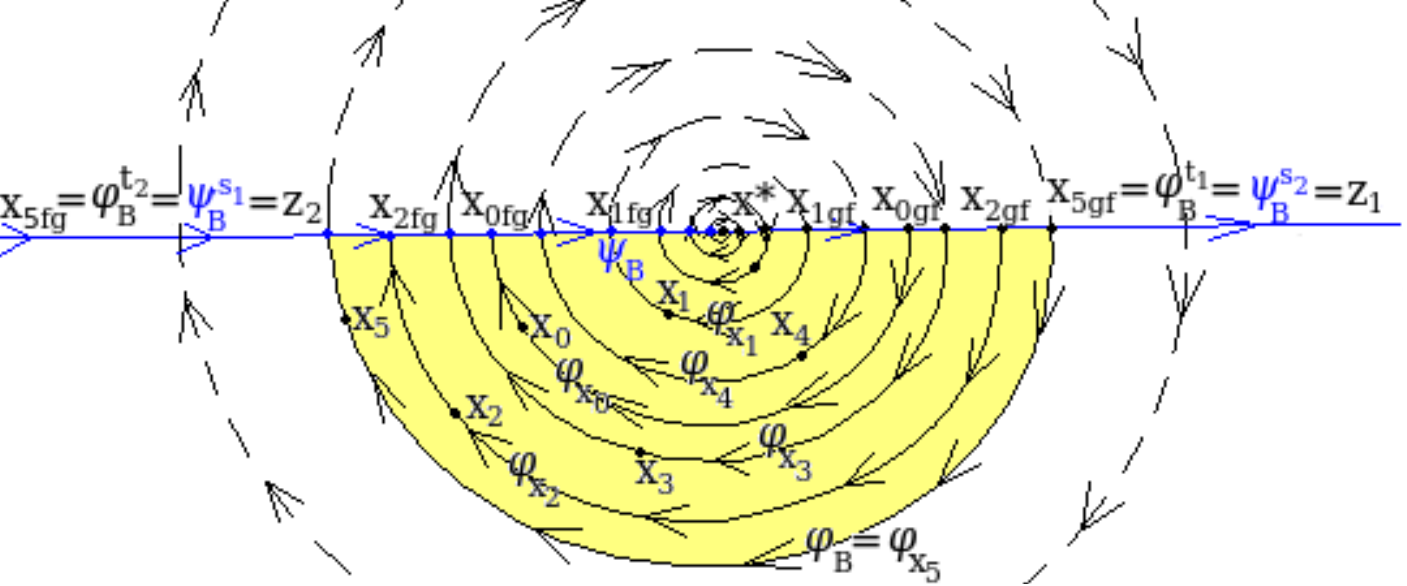}
  \caption{The second proof scheme of Theorem \ref{thm:chaotic_set_of_solutions}}
  \label{fig:proof_chaotic_set_of_solutions_theorem_2}
\end{figure}
If $x^*$ is saddle, the set $V$ is bounded by arbitrary trajectory of $g$ denoted by $\psi_B$ intersecting the unstable manifold $W^u$ in time $T_{W_u}$ and the stable manifold $W^s$ in time $T_{W_s}$ such that $T_{W_u}<T_{W_s}$ and by arbitrary trajectory of $f$ denoted by $\varphi_B$ intersecting trajectory $\psi_B$ in two points such that for some $t_1<t_2$, $T_{W_u}<s_1<s_2<T_{W_u}$ it holds $\varphi_B^{t_1}=\psi_B^{s_2}=:z_1$ and $\varphi_B^{t_2}=\psi_B^{s_1}=:z_2$ and $\psi_B^s \notin \varphi_B $ for every $s \in (s_1,s_2)$, see Figure  \ref{fig:proof_chaotic_set_of_solutions_theorem_3} (on the left), in "non-degenerated" case describing by Theorem \ref{thm:hyperb_sing_point-chaotic_set} (\ref{thm:saddle-chaotic_set}) or by part (\ref{proof:g_neq_mu.f}) of the proof of Theorem \ref{thm:saddle_collinear-chaotic_set}. In "degenerated" case describing by Remark \ref{rmk:to_thm-saddle_collinear-chaotic_set}, where stable manifold of one saddle overlaps the unstable manifold of the second saddle, the set $V$ is bounded by arbitrary trajectory of $g$ denoted by $\psi_B$ intersecting the separate not overlapped manifold in two points and by arbitrary trajectory of $f$ denoted by $\varphi_B$ intersecting trajectory $\psi_B$ in two points. Let $\varphi_B^{t_1}=\psi_B^{s_2}=:z_1$ and $\varphi_B^{t_2}=\psi_B^{s_1}=:z_2$ for some $t_1<t_2$, $s_1<s_2$, see Figure \nolinebreak \ref{fig:proof_chaotic_set_of_solutions_theorem_3} (on the right).
\begin{figure}[ht]
  \centering
  \includegraphics[height=6.5cm]{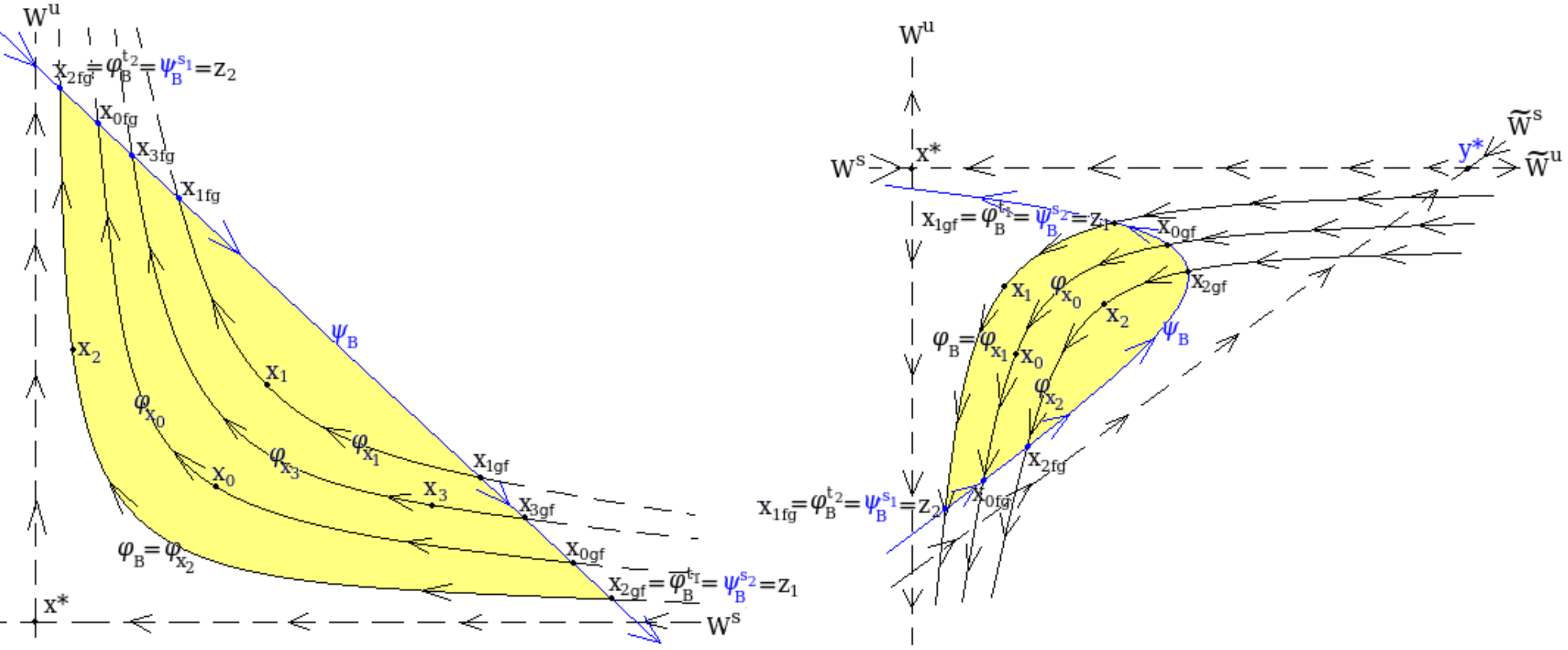}
  \caption{The third proof scheme of Theorem \ref{thm:chaotic_set_of_solutions}}
  \label{fig:proof_chaotic_set_of_solutions_theorem_3}
\end{figure}
For every point $x_0 \in V$ the solution $\gamma_0$ is given by the trajectory of branch $f$ denoted by $\varphi_{x_0}$ from the point $x_0$ to the point $x_{0fg} \in \psi_B$, then by the trajectory $\psi_B$ from the point $x_{0fg}$ to the point $x_{0gf}$, then by the trajectory $\varphi_{x_0}$ from the point $x_{0gf}$ to the point $x_{0fg}$ through the point $x_0$ and so on, see Figure \ref{fig:proof_chaotic_set_of_solutions_theorem_1} for node $x^*$, see Figure \nolinebreak \ref{fig:proof_chaotic_set_of_solutions_theorem_2} for focus $x^*$ and see Figure \ref{fig:proof_chaotic_set_of_solutions_theorem_3} for saddle $x^*$. So, the solution $\gamma_0$ is given by consequence of time $T_0=0$, $T_1$, $T_{2k}=T_1+(k-1)T_{\varphi}+kT_{\psi}$, $T_{2k+1}=T_1+kT_{\varphi}+kT_{\psi}$, $k \in \{ 1,2,3, ...\}$, such that $\varphi^{T_0}(x_0)=\varphi^0(x_0)=x_0$, $\varphi^{T_1}(x_0)=x_{0fg} \in \psi_B$, $\psi^{T_{\psi}}(\varphi^{T_1}(x_0))=\psi^{T_{\psi}}(x_{0fg})=x_{0gf}$, $\varphi^{T_{\varphi}}(x_{0gf})=\varphi^{T_1}(x_0)=x_{0fg}$. We denote by $\gamma_1$ the solutions which are described below. For every point $x_0 \in V$ the solution $\gamma_{x_j} \in \gamma_1$ passing through every point $x_j \in V \setminus \varphi_{x_0}$ ($j \in \mathbb{R} \setminus \{0\}$) is given by the trajectory of branch $f$ (denoted by $\varphi_{x_0}$) from the point $x_0$ to the point $x_{0fg} \in \psi_B$, then by the trajectory $\psi_B$ from the point $x_{0fg}$ to the point $x_{jgf}$, then by the trajectory of $f$ denoted by $\varphi_{x_j}$ passing through the point $x_j$ from the point $x_{jgf}$ to the point $x_{jfg} \in \psi_B$, then by the trajectory $\psi_B$ from the point $x_{jfg}$ to the point $x_{0gf}$, then by the trajectory $\varphi_{x_0}$ from the point $x_{0gf}$ through the point $x_0$ to the point $x_{0fg}$, then by the trajectory $\psi_B$ from the point $x_{0fg}$ to the point $x_{jgf}$ etc., see e.g. for the point $x_2$ on Figure \ref{fig:proof_chaotic_set_of_solutions_theorem_1} for node $x^*$, on Figure \ref{fig:proof_chaotic_set_of_solutions_theorem_2} for focus $x^*$ and on Figure \ref{fig:proof_chaotic_set_of_solutions_theorem_3} for saddle $x^*$. So, the solution $\gamma_{x_j}$ is given by the consequence of the time $T_0=0$, $T_1$, $T_{4k-2}=T_1+kT_{\psi_{x_0 x_j}}+(k-1)T_{\varphi_{x_j}}+(k-1)T_{\psi_{x_j x_0}}+(k-1)T_{\varphi_{x_0}}$, $T_{4k-1}=T_1+kT_{\psi_{x_0 x_j}}+kT_{\varphi_{x_j}}+(k-1)T_{\psi_{x_j x_0}}+(k-1)T_{\varphi_{x_0}}$, $T_{4k}=T_1+kT_{\psi_{x_0 x_j}}+kT_{\varphi_{x_j}}+kT_{\psi_{x_j x_0}}+(k-1)T_{\varphi_{x_0}}$ and $T_{4k+1}=T_1+kT_{\psi_{x_0 x_j}}+kT_{\varphi_{x_j}}+kT_{\psi_{x_j x_0}}+kT_{\varphi_{x_0}}$, $k \in \{ 1,2,3, ...\}$ such that $\varphi^{T_0}(x_0)=\varphi^0(x_0)=x_0$, $\varphi^{T_1}(x_0)=x_{0fg} \in \psi_B$, $\psi^{T_{\psi_{x_0 x_j}}}(x_{0fg})=x_{jgf}$, $\varphi^{T_{\varphi_{x_j}}}(x_{jgf})=x_{jfg}$, $\psi^{T_{\psi_{x_j x_0}}}(x_{jfg})=x_{0gf}$, $\varphi^{T_{\varphi_{x_0}}}(x_{0gf})=\varphi^{T_1}(x_0)=x_{0fg}$. Especially $T_1=0$ for starting points lying on $\psi_B$ (we start with the switch). It is obvious that $V^*=\{ \gamma_0, \gamma_1\}$ fulfils three conditions mentioned above to be chaotic. The solutions $\gamma_0$ and also every $\gamma_{x_j}$ "stay forever in $V$". Each point $x \in V$ "can be connected with each another point" in $V$ by some $\gamma_{x_j}$ or $\gamma_0$. Neither $\gamma_0$ nor $\gamma_{x_j} \in \gamma_1$ for every $j \in \mathbb{R} \setminus \{0\}$ are dense in $V$. The proof for stable node or stable focus is analogous with reversed time direction.\\
The construction of set of solutions $V^*$ for $V$ homeomorphic to $[0,1]$ is similar. The set $V$ is the curve segment with two distinct end points not containing $x^*$ or $y^*$ where $cos^{-1}\left( \frac{f(x) \cdot g(x)}{\|f(x)\| \|g(x)\|} \right)=\pi$ for all $x \in V$, i.e. where the trajectories corresponding to the flows $\varphi$ and $\psi$ have the opposite direction, see Figure \ref{fig:unstable_node_unstable_node_2}, \ref{fig:unstable_node_stable_node_2}, \ref{fig:unstable_node_unstable_saddle_2}, \ref{fig:stable_node_stable_node_2}, \ref{fig:stable_node_unstable_saddle_2}, \ref{fig:unstable_node_unstable_saddle_4}-\ref{fig:stable_node_unstable_saddle_5}, \ref{fig:unstable_focus_unstable_saddle_3}, \ref{fig:stable_focus_unstable_saddle_3} or \ref{fig:unstable_saddle_unstable_saddle_5}-\ref{fig:unstable_saddle_unstable_saddle_8}. Let $\varphi_B$ and $\psi_B$ be trajectories corresponding to $f$ and $g$ such that $\varphi_B^{t_1}=\psi_B^{s_2}=z_1$ and $\varphi_B^{t_2}=\psi_B^{s_1}=z_2$ for some $t_1<t_2$, $s_1<s_2$ where $z_1$ and $z_2$ be two distinct end points of the curve segment, see Figure \ref{fig:proof_chaotic_set_of_solutions_theorem_4}.
\begin{figure}[ht]
  \centering
  \includegraphics[height=1.2cm]{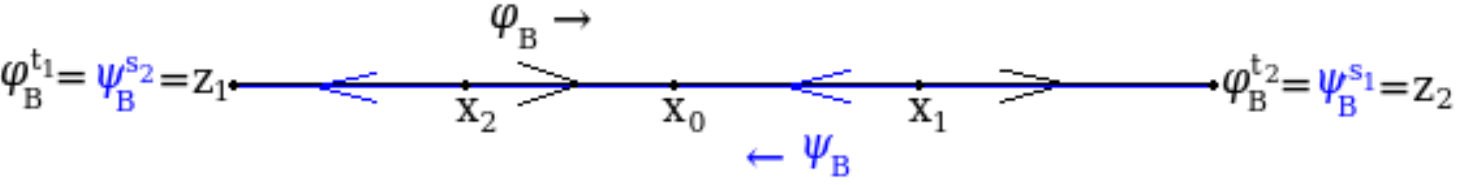}
  \caption{The fourth proof scheme of Theorem \ref{thm:chaotic_set_of_solutions}}
  \label{fig:proof_chaotic_set_of_solutions_theorem_4}
\end{figure}
Let analogously $\{ \gamma_0, \gamma_1\}$ be a chaotic set of solutions $V^*$. We describe the solutions $\gamma_0,\gamma_1$ using Figure \ref{fig:proof_chaotic_set_of_solutions_theorem_4}. The chaotic set $V$ is displayed by curve with two distinct end points $z_1$ and $z_2$, see Figure \nolinebreak \ref{fig:proof_chaotic_set_of_solutions_theorem_4}. For every point $x_0 \in V$ the solution $\gamma_0$ is given by the trajectory $\varphi_B$ from the point $x_0$ to the point $z_2$, then by the trajectory $\psi_B$ from the point $z_2$ to the point $x_0$, then by the trajectory $\varphi_B$ from the point $x_0$ to the point $z_2$ and so on, see Figure \ref{fig:proof_chaotic_set_of_solutions_theorem_4}. So, the solution $\gamma_0$ is given by consequence of time $T_0=0$, $T_{2k-1}=kT_{\varphi}+(k-1)T_{\psi}$, $T_{2k}=kT_{\varphi}+kT_{\psi}$, $k \in \{ 1,2,3, ...\}$ such that $\varphi^{T_0}(x_0)=\varphi^0(x_0)=x_0$, $\varphi^{T_{\varphi}}(x_0)=z_2$, $\psi^{T_{\psi}}(\varphi^{T_{\varphi}}(x_0))=\psi^{T_{\psi}}(z_2)=x_0$. We denote analogously by $\gamma_1$ the solutions which are described below. For every point $x_0 \in V$ the solution $\gamma_{x_j} \in \gamma_1$ denoted per $x_j \in V$, $x_j \neq x_0$ ($j \in \mathbb{R} \setminus \{0\} $) is given by the trajectory $\varphi_B$ from the point $x_0$ to the point $z_2$, then by the trajectory $\psi_B$ from the point $z_2$ to the point $x_j$, then by the trajectory $\varphi_B$ from the point $x_j$ to the point $z_2$, then by the trajectory $\psi_B$ from the point $z_2$ to the point $x_0$ etc., see e.g. for the point $x_2$ on Figure \ref{fig:proof_chaotic_set_of_solutions_theorem_4}. So, the solution $\gamma_{x_j}$ is given by the consequence of the time $T_0=0$, $T_{4k-3}=kT_{\varphi_{x_0 z_2}}+(k-1)T_{\psi_{z_2 x_j}}+ (k-1)T_{\varphi_{x_j z_2}}+(k-1)T_{\psi_{z_2 x_0}}$, $T_{4k-2}=kT_{\varphi_{x_0 z_2}}+kT_{\psi_{z_2 x_j}}+(k-1)T_{\varphi_{x_j z_2}}+(k-1)T_{\psi_{z_2 x_0}}$, $T_{4k-1}=kT_{\varphi_{x_0 z_2}} +kT_{\psi_{z_2 x_j}}+kT_{\varphi_{x_j z_2}}+(k-1)T_{\psi_{z_2 x_0}}$ and $T_{4k}=kT_{\varphi_{x_0 z_2}}+kT_{\psi_{z_2 x_j}}+kT_{\varphi_{x_j z_2}}+kT_{\psi_{z_2 x_0}}$, $k \in \{ 1,2,3, ...\}$ such that $\varphi^{T_0}(x_0)=\varphi^0(x_0)=x_0$, $\varphi^{T_{\varphi_{x_0 z_2}}}(x_0)=z_2$, $\psi^{T_{\psi_{z_2 x_j}}}(z_2)=x_j$, $\varphi^{T_{\varphi_{x_j z_2}}}(x_j)=z_2$, $\psi^{T_{\psi_{z_2 x_0}}}(z_2)=x_0$. Especially $T_1=0$ for starting points $z_2$ (we start with the switch). It is analogously obvious that $V^*=\{ \gamma_0, \gamma_1\}$ is chaotic chaotic set of solutions.\\
At the end of the proof, we remark that from the nature of such type of set of solutions $V^*$ follows uncountability of the set of such $V^*$. We can construct uncountable many sets of solutions $V^*$ based on presented construction. Let $\gamma_2$ be set of solution such that for every $x_0 \in V$ the moving point goes on the trajectory of $f$ from the point $x_0$ to the point $x_{0fg}$ or $z_2$, then on the trajectory of $g$ from the point $x_{0fg}$ or $z_2$ to the point $x_{jgf}$ where $x_{jgf} \neq x_{0gf}$ or to $x_j$ where $j \neq 0$, then on the trajectory of $f$ to the point $x_{jfg}$ or $z_2$, then on the trajectory of $g$ to the point $x_{kgf}$ where $x_{kgf} \neq x_{jgf} \neq x_{0gf}$ or to $x_k$ where $k \neq j \neq 0$, then on the trajectory of $f$ to the point $x_{kfg}$ or $z_2$, then on the trajectory of $g$ to the point $x_{0gf}$ or $x_0$, then on the trajectory of $f$ to the point $x_{0fg}$ or $z_2$ etc., see Figure \ref{fig:proof_chaotic_set_of_solutions_theorem_1}, \ref{fig:proof_chaotic_set_of_solutions_theorem_2}, \ref{fig:proof_chaotic_set_of_solutions_theorem_3} or \ref{fig:proof_chaotic_set_of_solutions_theorem_4}. Let similarly $\gamma_{1.2}$ be set of solution such that for every $x_0 \in V$ the moving point goes on the trajectory of $f$ from the point $x_0$ to the point $x_{0fg}$ or $z_2$, then on the trajectory of $g$ from the point $x_{0fg}$ or $z_2$ to the point $x_{jgf}$ where $x_{jgf} \neq x_{0gf}$ or to $x_j$ where $j \neq 0$, then on the trajectory of $f$ to the point $x_{jfg}$ or $z_2$, then on the trajectory of $g$ again to the point $x_{jgf}$ where $x_{jgf} \neq x_{0gf}$ or to $x_j$ where $j \neq 0$, then on the trajectory of $f$ to the point $x_{jfg}$ or $z_2$, then on the trajectory of $g$ to the point $x_{0gf}$ or $x_0$, then on the trajectory of $f$ to the point $x_{0fg}$ or $z_2$ etc., see Figure \ref{fig:proof_chaotic_set_of_solutions_theorem_1}, \ref{fig:proof_chaotic_set_of_solutions_theorem_2}, \ref{fig:proof_chaotic_set_of_solutions_theorem_3} or \ref{fig:proof_chaotic_set_of_solutions_theorem_4}. Then $\{\gamma_0, \gamma_2 \}$ or $\{\gamma_0, \gamma_{1.2} \}$ is chaotic set of solutions. We can construct in this way uncountable many sets of solutions $V^*$ based on combinations of every relevant path ("loops" and number of the same "loops") of solutions $\gamma \in V^*$.
\end{proof}

\begin{rmk}
In the proof of Theorem \ref{thm:chaotic_set_of_solutions} there are described boundaries of chaotic set $V$ with non-empty interior but every combination of considered singular points has a "maximal" area in plane $\mathbb{R}^2$ where the chaotic sets $V$ can exist as we can discern on Figures \ref{fig:unstable_node_unstable_node_1}-\ref{fig:unstable_saddle_unstable_saddle_1_2} and \ref{fig:unstable_node_unstable_saddle_3}-\ref{fig:unstable_saddle_unstable_saddle_8}. For example the combination of two unstable nodes has this area bounded by trajectory of $f$ passing through the point $y^*$ and by the trajectory of $g$ passing through the point $x^*$ (without these trajectories), see Figure \ref{fig:unstable_node_unstable_node_1}. For another example in the case of unstable and stable node this area is consists of two disconnected parts. The first part is bounded by trajectory of $f$ passing through the point $y^*$ and by the last trajectory of $g$ intersected this trajectory of $f$ (without this trajectory corresponding to $f$), and vice versa the second part is bounded by the trajectory of $g$ passing through the point $x^*$ and by the last trajectory of $f$ intersected this trajectory of $g$ (without this trajectory corresponding to $g$), see Figure \ref{fig:unstable_node_stable_node_1}. In the combination of two foci this maximal area is bounded analogously by first trajectories passing through the point $x^*$ or $y^*$ (with these trajectories) and this area can be one connected part or can consist of two disconnected parts and can not contain $x^*$ or $y^*$ as an interior point, i.e. $x^*$ or $y^*$ lies on the boundary line, see Figure \ref{fig:unstable_focus_unstable_focus}, \ref{fig:unstable_focus_stable_focus} or \ref{fig:stable_focus_stable_focus}. From Remark \ref{rmk:to_thm-saddle_collinear-chaotic_set} for "degenerated" case of combination of two saddles (stable manifold of one saddle overlaps the unstable manifold of the second saddle) follows that this maximal area is bounded by "triangle" $\triangle x^*y^*z$ (without boundary manifolds), see Figure \ref{fig:remark_to_proof_admitted_chaos_theorem_2}. In fact in this special case there are another areas where chaotic sets $V$ can exist. These areas are situated in "half-plane" bounded by the overlapped manifold and containing $\triangle x^*y^*z$, but the concrete position of this area depends of concrete situation.   
\end{rmk}

\begin{lemma}
\label{non_chaotic_set_of_one_solution}
The Devaney, Li-Yorke and distributional chaotic $V^*$ can not be set of only one solution $\gamma$.
\end{lemma}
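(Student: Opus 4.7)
The plan is to derive an immediate contradiction from the two defining conditions of a chaotic set in Definition \ref{df:chaotic_set}. I would argue by contradiction: suppose $V^{*} = \{\gamma\}$ consists of a single solution, yet $V$ is still a chaotic set in the sense of Definition \ref{df:chaotic_set}. Then both conditions (\ref{df:chaotic_set_a}) and (\ref{df:chaotic_set_b}) must be witnessed using only this one trajectory.

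First I would exploit condition (\ref{df:chaotic_set_a}): for every pair $a,b \in V$ there must exist a simple path from $a$ to $b$ generated by $V^{*}$. Since $V^{*}$ contains only $\gamma$, every such simple path is a sub-arc of the trajectory of $\gamma$. Fixing an arbitrary $b \in V$ and letting $a$ range over $V$, each point $a \in V$ must coincide with $\gamma(t_{0})$ for some $t_{0} \in T$. Consequently $V \subseteq \{\gamma(t) : t \in T\}$.

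Next I would invoke condition (\ref{df:chaotic_set_b}): there exist a non-empty $U \subset V$ open relative to $V$ and some $\gamma' \in V^{*}$ with $\gamma'(t) \in V \setminus U$ for every $t \in T$. Since $V^{*} = \{\gamma\}$, necessarily $\gamma' = \gamma$, so $\{\gamma(t) : t \in T\} \subseteq V \setminus U$. Combining the two inclusions yields $V \subseteq V \setminus U$, which forces $U = \emptyset$ and contradicts the non-emptiness of $U$.

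There is no substantial obstacle in this argument; the proof is essentially the unpacking of Definition \ref{df:chaotic_set}. The only subtlety worth pointing out explicitly is that condition (\ref{df:chaotic_set_a}) forces the trajectory of the unique element of $V^{*}$ to sweep through every point of $V$, while condition (\ref{df:chaotic_set_b}) simultaneously forbids that same trajectory from ever visiting some relatively open subset of $V$, and these two requirements are plainly incompatible as soon as $V^{*}$ is a singleton.
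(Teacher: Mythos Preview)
Your argument is correct and follows essentially the same route as the paper: condition~(\ref{df:chaotic_set_a}) forces the single trajectory $\gamma$ to pass through every point of $V$, while condition~(\ref{df:chaotic_set_b}) requires that same $\gamma$ to miss a non-empty relatively open subset, and these two requirements are incompatible. The paper additionally remarks that a single $\gamma$ cannot be Li--Yorke chaotic on its own because points on the same trajectory are not scrambled, but this is a side observation rather than a different proof strategy.
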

\begin{proof}
If the set $V^*$ contains only one solution $\gamma$ then such solution will not fulfil both property (\ref{df:chaotic_set_a}) and (\ref{df:chaotic_set_b}) from Definition \ref{df:chaotic_set} together. Obviously, we can use only one solution $\gamma \in V^*$ to ensure property (\ref{df:chaotic_set_a}) (we want to every pair $(x,y),~x,y \in V$ be scrambled), hence $\gamma$ has to go through every point of the set $V$. But then $\{ \gamma(t):t \in T \}$ is dense in $V$. But either such solution $\gamma$ is not Devaney, Li-Yorke and distributional chaotic itself because the points lying in the same trajectory (corresponding $f$ or $g$) are not scrambled.
\end{proof}

\begin{lemma}
The set of all Devaney, Li-Yorke and distributional chaotic sets of solutions $V^*$ on $V \subset X \subseteq \mathbb{R}^2$ is not dense or nowhere dense in $\mathcal{P}(D)$ ($D$ restricted on $V \subset X$).
\end{lemma}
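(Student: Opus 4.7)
The plan is to verify the two topological properties separately by producing explicit witnesses: (i) a nonempty open subset of $\mathcal{P}(D|_V)$ that is disjoint from the family of chaotic $V^*$, certifying that the family is not dense, and (ii) a chaotic $V^*$ that lies in the topological interior of the family, certifying that it is not nowhere dense. The underlying topology on $\mathcal{P}(D|_V)$ that I would fix from the outset is the one induced by the uniform-type metric on $D|_V$ via the Hausdorff distance (or equivalently, a Vietoris-type topology), so that ``closeness'' of two subsets of $D|_V$ means that each element of one is approximated arbitrarily well by an element of the other on compact time-intervals.

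For non-density, I would invoke Lemma \ref{non_chaotic_set_of_one_solution} directly: any singleton $\{\gamma\} \in \mathcal{P}(D|_V)$ fails to be chaotic because properties (\ref{df:chaotic_set_a}) and (\ref{df:chaotic_set_b}) of Definition \ref{df:chaotic_set} cannot hold simultaneously for a single orbit. Fixing such a $\gamma$, a sufficiently small Hausdorff-neighbourhood of $\{\gamma\}$ consists of subsets of $D|_V$ that are still ``essentially one orbit,'' hence still violate the simple-path requirement (\ref{df:chaotic_set_a}) for generic pairs $a,b \in V$. This produces an open set in $\mathcal{P}(D|_V)$ missing every chaotic $V^*$.

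For not nowhere dense, I would start from an explicit chaotic $V_0^* = \{\gamma_0,\gamma_1\}$ produced by Theorem \ref{thm:chaotic_set_of_solutions}, and observe two things. First, any superset $V_0^* \cup \{\gamma\}$ with $\gamma \in D|_V$ whose orbit is non-dense in $V$ is still a chaotic set of solutions: property (\ref{df:chaotic_set_a}) is inherited from $\gamma_0,\gamma_1$, and property (\ref{df:chaotic_set_b}) continues to be witnessed by $\gamma_0$. Second, the uncountability argument at the end of Theorem \ref{thm:chaotic_set_of_solutions}, which perturbs the switching-time schedule $(T_i)_{i \geq 0}$ and the combinatorics of the ``loops'', shows that an entire Hausdorff-neighbourhood of $V_0^*$ is filled with chaotic $V^*$. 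Thus $V_0^*$ lies in the interior of the family.

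The main obstacle is making rigorous that small Hausdorff-perturbations of the switching schedule really produce members of the family of chaotic $V^*$, and not members that accidentally become dense in $V$ (which would kill property (\ref{df:chaotic_set_b})). This reduces to a continuity/robustness statement already implicit in the constructions of Theorems \ref{thm:hyperb_sing_point-chaotic_set}, \ref{thm:saddle_collinear-chaotic_set} and \ref{thm:chaotic_set_of_solutions}: the ``loop'' $\gamma_0$ used to violate density is preserved under small perturbations of switching times, so density cannot be created by a small Hausdorff-move, and the two clauses (\ref{df:chaotic_set_a})--(\ref{df:chaotic_set_b}) remain valid simultaneously.
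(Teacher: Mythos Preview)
The paper's argument is entirely different and much shorter: it equips $\mathcal{P}(D)$ with the \emph{discrete} topology. Once one checks that the family $\mathcal{P}_C(D)$ of chaotic $V^*$ is nonempty (Theorem~\ref{thm:chaotic_set_of_solutions}) and that its complement $\mathcal{P}_N(D)$ is nonempty (Lemma~\ref{non_chaotic_set_of_one_solution}, or any non-$F$-invariant $\gamma$), the conclusion is immediate: in the discrete topology every subset is closed, so $\mathrm{cl}(\mathcal{P}_C(D))=\mathcal{P}_C(D)\subsetneq \mathcal{P}(D)$ gives non-density, and $\mathcal{P}_C(D)$ is itself open and nonempty, hence not nowhere dense. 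No perturbation or robustness argument is needed.

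Your Hausdorff/Vietoris approach is more ambitious, but the non-density half has a genuine gap with the topology you describe. You take the underlying metric on $D|_V$ to measure closeness only on compact time-intervals. Under such a metric, for any $\gamma$ and any $N$, every solution that coincides with $\gamma$ on $[0,N]$ lies in an arbitrarily small ball around $\gamma$. Now, starting from the chaotic family of Theorem~\ref{thm:chaotic_set_of_solutions} with base point $x_0=\gamma(N)$, prepend the segment $\gamma|_{[0,N]}$ to every member; the resulting family still satisfies condition~(\ref{df:chaotic_set_a}) of Definition~\ref{df:chaotic_set} (the required simple paths occur for $t_0>N$) and condition~(\ref{df:chaotic_set_b}) (the modified $\gamma_0$ has image $\gamma([0,N])$ union a single loop, still not dense in $V$). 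Hence every Hausdorff-neighbourhood of the singleton $\{\gamma\}$ meets $\mathcal{P}_C(D)$, contradicting your claimed open set of non-chaotic subsets. The heuristic that sets near $\{\gamma\}$ are ``essentially one orbit'' fails precisely because closeness on compact intervals imposes no constraint on long-time behaviour. With the discrete topology the paper simply sidesteps this issue.
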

\begin{proof}
Consider $D:=\{\gamma \in Z | \dot \gamma(t) \in F(\gamma(t)) \textit{ a.e.} \}$ restricted on $V$. Let $\mathcal{P}(D)$ be the power set of $D$ (the set of all subset of $D$), $\mathcal{P}_C(D)$ be set of all Devaney, Li-Yorke and distributional chaotic sets of solutions $\gamma$ and $\mathcal{P}_N(D)$ be set of all non-chaotic sets of solutions $\gamma$. Obviously $\mathcal{P}_C(D) \cup \mathcal{P}_N(D)=\mathcal{P}(D)$ and $\mathcal{P}_C(D) \cap \mathcal{P}_N(D)=\emptyset$. $\mathcal{P}_C(D) \neq \emptyset$, see Theorem \ref{thm:chaotic_set_of_solutions}, and $\mathcal{P}_N(D) \neq \emptyset$, non-chaotic set of solutions $\gamma$ is for example the set of only one $\gamma$, see Lemma \ref{non_chaotic_set_of_one_solution}, or of not F-invariant $\gamma$. We have topological space $\mathcal{P}(D)$ with discrete topology. $\mathcal{P}_C(D),\mathcal{P}_N(D) \subset \mathcal{P}(\mathcal{P}(D))$. So, $cl(\mathcal{P}_C(D))=\mathcal{P}_C(D)\neq \mathcal{P}(D)$, hence $\mathcal{P}_C(D)$ is not dense in $\mathcal{P}(D)$. Similarly $\mathcal{P}(D) \setminus \mathcal{P}_C(D)=\mathcal{P}_N(D)$ and  $cl(\mathcal{P}_N(D))=\mathcal{P}_N(D)\neq \mathcal{P}(D)$, hence $\mathcal{P}_C(D)$ is not nowhere dense in $\mathcal{P}(D)$.
\end{proof}

\begin{rmk}
It is obvious that in dynamical system $D$ there exist solutions $\gamma$ corresponding only to the one branch without branch switching, or solutions $\gamma$ "running out" the set $V$. The "force" causing the switch is exogenously determined. It is required the switch before solution $\gamma$ leaves the set $V$. And there has to be the reason depending on concrete modelled problem for this switch. It also depends on interpretation, see the application part of this paper - section \ref{aplication_in_macroeconomics}.
\end{rmk}

\section{Application in Macroeconomics}
\label{aplication_in_macroeconomics}

In this section we apply the theoretical findings from previous section in macroeconomics. We construct the new overall macroeconomic equilibrium model containing two branches - demand-oriented and supply-oriented and for connection of these two branches we use Euler equation branching. The switching between these two branches is interpreted by influence of the economic cycle. Then we describe economic behaviour of such overall model leading to the chaos and we submit reasonable economic interpretation of a cause of such behaviour. 

\subsection{Construction of New Overall Macroeconomic Equilibrium Model}

This new macroeconomic equilibrium model describes the macroeconomic situation in two sector economy, precisely the goods market equilibrium and the money market equilibrium simultaneously including every important economic phenomena like an economic cycle, an inflation effect, an endogenous money supply etc. in one overall model. This model follows from fundamental macroeconomic equilibrium model called IS-LM model. The original IS-LM model is strictly demand-oriented, assumes a constant price level and an exogenous money supply. This original conception is obsolete. Thus, we create new model eliminating these deficiencies and containing also a supply-oriented part. The demand-oriented model (the modified IS-LM model) holds in the recession and the supply-oriented model (the new QY-ML model) holds in the expansion. Then we join these two (sub-)models to one overall model called IS-LM/QY-ML by Euler equation branching interpreted by existence of economic cycle. So, we first present the modified IS-LM model eliminating mentioned deficiencies of the original model, then we construct the new QY-ML model supply oriented, then we explain when these models hold and at the end we join these two "sub-models" two one overall IS-LM/QY-ML model.

\subsubsection{Demand-Oriented Sub-model - Modified IS-LM Model}
\label{demad_oriented_sub-model}
    
We can find the original IS-LM model in e.g. \cite{gandolfo}. This model describes aggregate macroeconomic equilibrium, i.e. the goods market equilibrium and the money market (or  financial assets market) equilibrium simultaneously from the demand-oriented point of view. The demand-oriented model means that the supply is fully adapted to the demand. Here, we present our modification of the original model which eliminates its deficiencies or obsolete assumptions. This model is still demand-oriented but we eliminate the assumption of constant price level by modelling of inflation and the assumption of strictly exogenous money supply by connection of the endogenous and exogenous conception of the money supply.

\begin{df}
The \textit{modified IS-LM model} is given by the following system  
\begin{equation}
\label{dynamic_IS-LM}
\begin{array}{ll}
\textrm{IS:} & \frac{d Y}{d t} = \alpha_d [I(Y,R)-S(Y,R)] \\
\textrm{LM:} & \frac{d R}{d t} = \beta_d [L(Y,R- MP + \pi^e)-M(Y,R - MP + \pi^e)-M_{CB}],
\end{array}
\end{equation}
where \\
\begin{tabular}{lp{10cm}}
     $t$                   & is time, \\     
     $Y$                   & is aggregate income (GDP, GNP), \\
     $R$                   & is long-term real interest rate, \\
     $I(Y,R)$              & is investment function, \\
     $S(Y,R)$              & is saving function, \\
     $L(Y,R- MP + \pi^e)$  & is money demand function, \\
     $M(Y,R- MP + \pi^e)$  & is money supply function, \\
     $M_{CB} > 0$          & is money stock determined by central bank, \\
     $MP>0$                & is maturity premium, \\
     $\pi^e>0$             & is expected inflation rate, \\
     $\alpha_d, \beta_d>0$ & are parameters of dynamics.
\end{tabular}
\end{df}

\begin{rmk}
The goods market from the demand-oriented point of view is described by \textit{equation IS}. Analogously the money market from the demand-oriented point of view is described by the \textit{equation LM}. There are the investment and saving function on the goods market and the money demand function and money supply function on the money market. We suppose that all of these functions are differentiable.
\end{rmk}

\begin{rmk}
The main variables in the original IS-LM model is the aggregate income $Y$ and the interest rate $R$. In the modified model, we eliminate the original assumption of constant price level, so we need to distinguish two type of interest rate - a long-term real interest rate $R$ and a short-term nominal interest rate $i$. There is the long-term real interest rate on the goods market and the short-term nominal interest rate on the money market (or financial assets market). The well-known relation $i = R - MP + \pi^e$ holds. The sort-term nominal interest rate is positive and long-term real interest rate can be also negative because of an inflation rate. While $MP$ and $\pi^e$ are constants, $\frac{d i}{d t} = \frac{d (R - MP + \pi^e)}{d t} = \frac{d R}{d t}$ holds.
\end{rmk} 

\begin{rmk}
The one of the main criticised assumption of the original IS-LM model is an assumption of strictly exogenous money supply. This means that the money supply is some money stock determined by central bank. The endogenous money supply means that money is generated in economy by credit creation. Today's economists can not find any consensus between these two conceptions of the money supply. So, we join these two conceptions into one. We consider that the money supply is the endogenous quantity (some new defined function $M(Y,R-MP+\pi^e)$) with some exogenous part (constant $M_{CB}$).
\end{rmk}

\begin{rmk}
The \textit{curve IS} represents the goods market equilibrium. The curve IS is the set of ordered pairs fulfilling $I(Y,R)=S(Y,R)$. Similarly, the \textit{curve LM} represents the money market equilibrium. The \textit{curve LM} is the set of ordered pairs fulfilling $L(Y,R-MP+\pi^e)=M(Y,R-MP+\pi^e)$. So, the aggregate macroeconomic equilibrium (i.e. equilibrium on goods market and on money market, or on financial assets market, simultaneously) is the intersection point (one or more) of the curve IS and the curve LM. In the dynamic version we research the (un)stability of this macroeconomic system.
\end{rmk}

\begin{df}
\label{df:economic_properties_I_S_L_M}
\textit{Economic properties of the functions} $I(Y,R)$, $S(Y,R)$, $L(Y,R - MP + \pi^e)$ and $M(Y,R - MP + \pi^e)$ are the following:
\begin{equation}
\label{economic_I} 
0<\frac{\partial I}{\partial Y}<1, \frac{\partial I}{\partial R}<0,
\end{equation}
\begin{equation}
\label{economic_S}
0<\frac{\partial S}{\partial Y}<1, \frac{\partial S}{\partial R}>0,
\end{equation}
\begin{equation}
\label{economic_L}
\frac{\partial L}{\partial Y}>0, \frac{\partial L}{\partial R}<0. 
\end{equation}
\begin{equation}
\label{economic_M} 
0 < \frac{\partial M}{\partial Y} < \frac{\partial L}{\partial Y}, \frac{\partial M}{\partial R} > 0.
\end{equation}
\end{df}

\begin{rmk}
The economic properties of the investment, saving and money demand function are standard. The economic properties of the new defined money supply function means that the relation between supply of money and aggregate income and also interest rate is positive and that the rate of increase of money supply depending on aggregate income is smaller than the rate of increase of money demand depending on aggregate income because the banks are more cautious than another subjects. $\frac{\partial L(Y,i)}{\partial i}=\frac{\partial L(Y,R - MP + \pi^e)}{\partial R}$ and $\frac{\partial M(Y,i)}{\partial i}=\frac{\partial M(Y,R - MP + \pi^e)}{\partial R}$ hold, because we assume constant $MP$ and $\pi^e$.
\end{rmk}

\subsubsection{Supply-Oriented Sub-model - New QY-ML Model}
\label{supply_oriented_sub-model}

In this subsection we construct new model describing the aggregate macroeconomic equilibrium or (un)stability which is supply-oriented in opposite of the IS-LM model. The construction of this new model is similar as of the IS-LM model. We find the simultaneous equilibrium on the goods market and on the money market but under assumption that the demand is fully adapted to the supply. We also consider the floating price level and the endogenous money supply with some exogenous part in the same way as in the modified IS-LM model.

\begin{df}
The \textit{QY-ML model} is given by the following system  
\begin{equation}
\label{dynamic_QY-ML}
\begin{array}{ll}
\textrm{QY:} & \frac{d Y}{d t} = \alpha_s [Q(\mathcal{K}(Y,R),\mathcal{L}(Y,R),\mathcal{T}(Y,R))-Y] \\
\textrm{ML:} & \frac{d R}{d t} = \beta_s [M(Y,R - MP + \pi^e)+M_{CB}-L(Y,R- MP + \pi^e)],
\end{array}
\end{equation}
where \\
\begin{tabular}{lp{10cm}}
     $t$                                      & is time, \\     
     $Y$                                      & is aggregate income (GDP, GNP), \\
     $R$                                      & is long-term real interest rate, \\    
     $Q(\mathcal{K},\mathcal{L},\mathcal{T})$ & is production function, \\
     $\mathcal{K}(Y,R)$                       & is capital function, \\
     $\mathcal{L}(Y,R)$                       & is labour function, \\
     $\mathcal{T}(Y,R)$                       & is technical progress function, \\
     $M(Y,R- MP + \pi^e)$                     & is money supply function, \\
     $M_{CB} > 0$                             & is money stock determined by central bank, \\
     $L(Y,R- MP + \pi^e)$                     & is money demand function, \\
     $MP>0$                                   & is maturity premium, \\
     $\pi^e>0$                                & is expected inflation rate, \\
     $\alpha_s, \beta_s>0$                    & are parameters of dynamics. 
\end{tabular}
\end{df}

\begin{rmk}
Thus, how do we proceed during the construction? If the goods demand is fully adapted to the goods supply, than the aggregate production has to be covered by demand. So, the supply side is represented by some aggregate production function $Q$ and the demand side is represented by level of aggregate income $Y$ on the goods market. The production $Q$ is the function of capital $\mathcal{K}$, labour $\mathcal{L}$ and technical progress $\mathcal{T}$. We will consider that $\mathcal{K}$, $\mathcal{L}$ and $\mathcal{T}$ are dependent on aggregate income $Y$ and long-term real interest rate $R$. Summary, $Q=Q(\mathcal{K}(Y,R),\mathcal{L}(Y,R),\mathcal{T}(Y,R))$. So, the goods market from the supply-oriented point of view is described by \textit{equation QY}. The money demand is fully adapted to the money supply, so we have the \textit{equation ML} to describe the money market from the supply-oriented point of view. There are the production function on the goods market and the money supply and money demand function on the money market. We suppose that all of these functions are differentiable.
\end{rmk}

\begin{rmk}
The \textit{curve QY} represents the goods market equilibrium. The curve QY is the set of ordered pairs fulfilling $Q(\mathcal{K}(Y,R),\mathcal{L}(Y,R),\mathcal{T}(Y,R))=Y$. Similarly, the \textit{curve ML} represents the money market equilibrium. The curve ML is the set of ordered pairs fulfilling $M(Y,R-MP+\pi^e)=L(Y,R-MP+\pi^e)$. So, the aggregate macroeconomic equilibrium (i.e. the equilibrium on goods market and on money market, or on financial assets market, simultaneously) is the intersection point (one or more) of the curve QY and the curve ML. In the dynamic version we research the (un)stability of this macroeconomic system.
\end{rmk}

\begin{df}
\label{df:economic_properties_Q_K_L_T}
\textit{Economic properties of the aggregate production function} are given by
\begin{equation}
\label{economic_Q}
\frac{\partial Q}{\partial \mathcal{K}}>0, \frac{\partial Q}{\partial \mathcal{L}}>0, \frac{\partial Q}{\partial \mathcal{T}}>0.
\end{equation} 
\textit{Economic properties of the production factors functions} are the following
\begin{equation}
\label{economic_K_L_T_Y}
\frac{\partial \mathcal{K}}{\partial Y}>0, \frac{\partial \mathcal{L}}{\partial Y}>0, \frac{\partial \mathcal{T}}{\partial Y}>0,
\end{equation}
\begin{equation}
\label{economic_K_L_T_R}
\frac{\partial \mathcal{K}}{\partial R}<0, \frac{\partial \mathcal{L}}{\partial R}<0, \frac{\partial \mathcal{T}}{\partial R}<0.
\end{equation}
\end{df}

\begin{rmk}
The economic interpretation of these properties is following. If the production factors (i.e. capital, labour and technical progress) are increased then the aggregate production will increase. The relations between production factors ($\mathcal{K}$, $\mathcal{L}$, $\mathcal{T}$) and aggregate income ($Y$) are positive. And the relations between production factors ($\mathcal{K}$, $\mathcal{L}$, $\mathcal{T}$) and long-term real interest rate ($R$) are negative.
\end{rmk}

\subsubsection{Influence of Economic Cycle}
\label{influence_economic_cycle}

The previous "sub-models" hold in the different economic situations. This holding of demand-oriented and supply-oriented "sub-model" and switching between them depends on the phase of the economic cycle. In this section, we explain this context. On the Figure \ref{fig:economic_cycle}, there is illustrated the economic cycle. We can see its phases: expansion, peak, recession, trough and so on along the economic growth trend. 
\begin{figure}[ht]
  \centering
  \includegraphics[height=3.8cm]{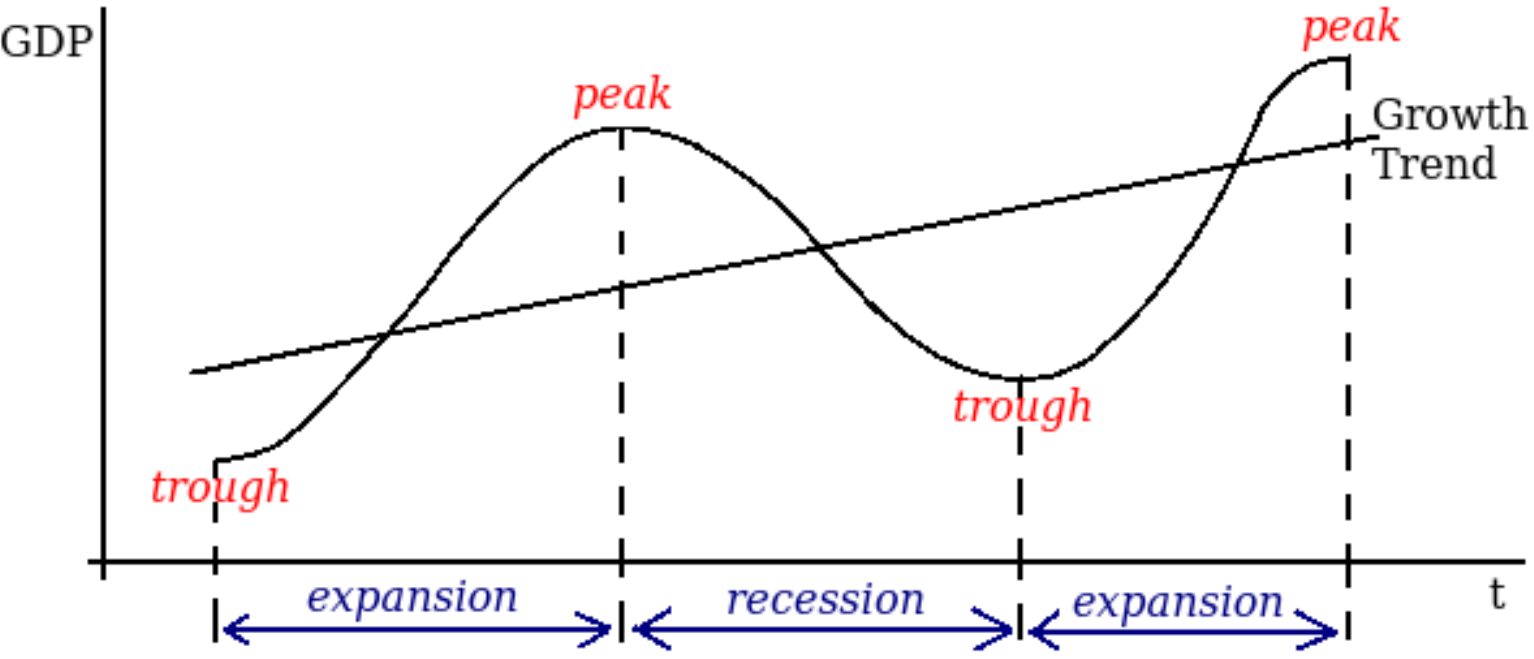}
  \caption{Economic cycle}
  \label{fig:economic_cycle}
\end{figure}

The demand-oriented "sub-model" - modified IS-LM model holds in the recession phase. In the recession the economy is under the production-possibility frontier. So, the firms can flexibly react on the demand, thus the supply is adapted to the demand. In the trough the economic situation is changed. The demand-oriented "sub-model" is switched to the supply-oriented "sub-model". Then, there is the expansion phase where supply-oriented model - new QY-ML model holds. In the expansion the production increases, the production factors are sufficiently rewarded and change theirs income to the aggregate demand. So, the demand is adapted to the supply (aggregate production). The new QY-ML model holds until the peak where the economic situation is changed. In the peak the supply-oriented "sub-model" is switched to the demand-oriented "sub-model". Further, the cycle continues in such a way. We summarize this mechanism in Table \ref{tab:switching_mechanism_according_to_economic_cycle}.
\begin{table}[h]
\begin{center}
\begin{tabular}{|l|l|l|}
  \hline
  Phase          & Orientation                   & Describing by \\
  \hline \hline
  Recession         & Demand-oriented model         & Modified IS-LM model \\
  \hline
  Trough            & Changing from demand-oriented & Switching from IS-LM model\\
                    & to supply-oriented model      & to QY-ML model \\
  \hline
  Expansion         & Supply-oriented model & New QY-ML model \\
  \hline
  Peak              & Changing from supply-oriented & Switching from QY-ML model \\
                    & to demand-oriented model      & to IS-LM model \\
  \hline
\end{tabular}
\vspace{0.1cm}
\caption{Switching mechanism according to economic cycle}
\label{tab:switching_mechanism_according_to_economic_cycle}
\end{center}
\end{table}

\subsubsection{Overall Macroeconomic IS-LM/QY-ML Model}

In this subsection, we formulate the overall macroeconomic IS-LM/QY-ML model. This model consists of demand-oriented "sub-model" - the modified IS-LM model (defined in subsection \ref{demad_oriented_sub-model}) and of supply-oriented "sub-model" - the new QY-ML model (defined in subsection \ref{supply_oriented_sub-model}). These two "sub-models" are connected by Euler equation branching (see Definition \ref{df:Euler_equation_branching}).
\begin{df}
The \textit{overall macroeconomic IS-LM/QY-ML} model is given by the following differential inclusion
{\Small
\begin{equation}
\label{model_IS-LM/QY-ML}
\left(
\begin{array}{c}
\dot Y \\
\dot R 
\end{array}
\right)
\in \left\{
\left(
\begin{array}{c}
\alpha_d [I(Y,R)-S(Y,R)] \\
\beta_d [L(Y,i)-M(Y,i)-M_{CB}
\end{array}
\right),
\left(
\begin{array}{c}
\alpha_s [Q(\mathcal{K}(Y,R),\mathcal{L}(Y,R),\mathcal{T}(Y,R))-Y] \\
\beta_s [M(Y,i)+M_{CB}-L(Y,i)]
\end{array}
\right)
\right\}
\end{equation}}
where $i=R - MP + \pi^e$, constant $M_{CB}>0$ and parameters of dynamics $\alpha_d>0$, $\alpha_s>0$, $\beta_d>0$, $\beta_s>0$.
\end{df}
The investment function $I(Y,R)$, saving function $S(Y,R)$, money demand function $L(Y,i)$, money supply function $M(Y,i)$ and aggregate production function $Q(\mathcal{K},\mathcal{L},\mathcal{T})$ have the previously introduced economic properties (\ref{economic_I}), (\ref{economic_S}), (\ref{economic_L}), (\ref{economic_M}) and (\ref{economic_Q}). The production factors functions $\mathcal{K}(Y,R),\mathcal{L}(Y,R),\mathcal{T}(Y,R)$ have the economic properties (\ref{economic_K_L_T_Y}) and (\ref{economic_K_L_T_R}) mentioned above.

The solutions of this differential inclusion are solutions of demand-oriented branch (IS-LM model), of supply-oriented branch (QY-ML model) and also switching between these two branches. These solutions follow from the economic cycle, see the previous subsection \nolinebreak \ref{influence_economic_cycle}.

This presented differential inclusion with two branches generates a continuous dynamical system describing overall macroeconomic situation in every phase of economic cycle.

\subsection{Chaos in IS-LM/QY-ML Model}

In this section, we describe a dynamical behaviour of the system (\ref{model_IS-LM/QY-ML}) with relevant economic interpretation. In economics, equilibrium points are important. The equilibrium point represents an ideal situation where the demand is equal to the supply. In our situation the equilibrium points on the goods market are points of whole curve IS or QY and the equilibrium points on the money market are points of whole curve LM=ML. So, the overall macroeconomic equilibrium (simultaneous equilibrium on goods and money market) is represented by intersection point(s) of the curve IS and LM, or QY and ML. But the economic situation relevant to the equilibrium is very rare. Thus, the description of disequilibrium points are more important. Such disequilibrium points are all other points in plane given by $[Y,R]$. There commonly exists an excess of goods demand, an excess of goods supply, or an excess of money demand and an excess of money supply. A description of dynamical behaviour of our IS-LM/QY-ML model using phase portraits show us behaviour of the moving point in the area of disequilibrium points more than in equilibrium points. 

We show the dynamical behaviour of the most typical economic case of IS-LM/QY-ML model. We consider the modified IS-LM model described by (\ref{dynamic_IS-LM}) with economic functions properties (\ref{economic_I}), (\ref{economic_S}), (\ref{economic_L}), (\ref{economic_M}). The typical IS curve is decreasing and the typical LM curve is increasing. Using Implicit Function Theorem we see that properties (\ref{economic_L}) and (\ref{economic_M}) ensure increasing LM curve. If we furthermore assume 
\begin{equation}
\label{economic_I_S}
\frac{\partial I}{\partial Y} < \frac{\partial S}{\partial Y}
\end{equation}  
in addition to the properties (\ref{economic_L}) and (\ref{economic_M}) then the curve IS is decreasing. We know that $Y \geq 0$ and $R \in \mathbb{R}$. Now, let $R_{IS}(Y)$ denote a function whose graph is the curve IS, and $R_{LM}(Y)$ denote the function whose graph is the curve LM. These functions exist because of Implicit Function Theorem. If we assume
\begin{equation}
\label{intersection_IS-LM}
\lim_{Y \rightarrow 0^+} R_{IS}(Y) > \lim_{Y \rightarrow 0^+} R_{LM}(Y),
\end{equation}
in addition to the conditions (\ref{economic_I}), (\ref{economic_S}), (\ref{economic_L}), (\ref{economic_M}) and (\ref{economic_I_S}) then there exists one intersection point of the curve IS and LM.

\begin{prp}
The singular point of the IS-LM model (\ref{dynamic_IS-LM}) with economic functions properties (\ref{economic_I}), (\ref{economic_S}), (\ref{economic_L}), (\ref{economic_M}), (\ref{economic_I_S}) and (\ref{intersection_IS-LM}) is stable node or focus.
\end{prp}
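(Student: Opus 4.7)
The plan is to linearize the IS-LM system at the (unique, by condition (\ref{intersection_IS-LM})) singular point $(Y^*,R^*)$, read off the signs of the entries of the Jacobian from the economic hypotheses, and then classify the singular point by the standard trace--determinant criterion. Writing $i = R - MP + \pi^e$ and using that $MP,\pi^e$ are constant, so that $\partial L/\partial R = \partial L/\partial i$ and $\partial M/\partial R = \partial M/\partial i$, the Jacobian of (\ref{dynamic_IS-LM}) at $(Y^*,R^*)$ is
\[
J \;=\; \begin{pmatrix} \alpha_d(I_Y - S_Y) & \alpha_d(I_R - S_R) \\ \beta_d(L_Y - M_Y) & \beta_d(L_R - M_R) \end{pmatrix}.
\]

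First I would record the signs: by (\ref{economic_I_S}) the entry $I_Y-S_Y<0$; by (\ref{economic_I}) and (\ref{economic_S}) we have $I_R<0<S_R$, so $I_R-S_R<0$; by (\ref{economic_L}) and (\ref{economic_M}) we have $0<M_Y<L_Y$, so $L_Y-M_Y>0$; and finally $L_R<0<M_R$, so $L_R-M_R<0$. Since $\alpha_d,\beta_d>0$, the sign pattern of $J$ is $\left(\begin{smallmatrix}-&-\\+&-\end{smallmatrix}\right)$.

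Next I would compute the two invariants. The trace
\[
\mathrm{tr}\,J \;=\; \alpha_d(I_Y-S_Y) + \beta_d(L_R-M_R)
\]
is a sum of two strictly negative terms, hence $\mathrm{tr}\,J < 0$. The determinant
\[
\det J \;=\; \alpha_d\beta_d\bigl[(I_Y-S_Y)(L_R-M_R) - (I_R-S_R)(L_Y-M_Y)\bigr]
\]
is, in the bracket, a sum of a product of two negatives and the negative of a product of a negative and a positive, i.e.\ $(+) + (+) > 0$, so $\det J > 0$.

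Finally I would invoke the classical trace--determinant classification of planar hyperbolic singular points: with $\mathrm{tr}\,J<0$ and $\det J>0$, both eigenvalues $\lambda_{1,2} = \tfrac12\bigl(\mathrm{tr}\,J \pm \sqrt{(\mathrm{tr}\,J)^2 - 4\det J}\bigr)$ have strictly negative real part, and purely imaginary eigenvalues (a center) are ruled out since $\mathrm{tr}\,J \ne 0$. Depending on the sign of the discriminant $(\mathrm{tr}\,J)^2 - 4\det J$ the point is a stable node (nonnegative discriminant) or a stable focus (negative discriminant), which is exactly the claim. I do not expect any genuine obstacle here; the only subtlety is the bookkeeping of signs in $\det J$, which is why I would write the bracket out explicitly as above rather than rely on the sign-matrix shorthand.
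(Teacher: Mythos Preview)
Your proof is correct and follows essentially the same approach as the paper: linearize at the equilibrium, read off the signs of $I_Y-S_Y$, $I_R-S_R$, $L_Y-M_Y$, $L_R-M_R$ from the economic hypotheses, and conclude $\mathrm{tr}\,J<0$, $\det J>0$ via the same eigenvalue/trace--determinant computation. The paper's proof is the same argument written slightly more tersely, with the discriminant split into the node/focus cases exactly as you do.
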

\begin{proof}
The eigenvalues of Jacobi's matrix $J$ of the system (\ref{dynamic_IS-LM}) in this singular point are \\
$\lambda_{1,2} = \frac{1}{2} \left[ \alpha_d (I_Y - S_Y) + \beta_d (L_R - M_R) \pm \sqrt{\left[ \alpha_d (I_Y - S_Y) + \beta_d (L_R - M_R)  \right]^2 - 4detJ} \right]$\\ where $det J = \alpha_d \beta_d \left[ (I_Y - S_Y)(L_R-M_R) - (I_R - S_R) (L_Y - M_Y) \right]$. The real part of eigenvalues $Re(\lambda_{1,2}) < 0$ in this point because of (\ref{economic_I_S}), $L_R - M_R<0$ according to (\ref{economic_L}) and (\ref{economic_M}) and $det J>0$ according to economic condition (\ref{economic_I}), (\ref{economic_S}), (\ref{economic_L}), (\ref{economic_M}) and (\ref{economic_I_S}). From this follows that this singular point is stable node if in addition $\left[ \alpha_d (I_Y - S_Y) + \beta_d (L_R - M_R)  \right]^2 > 4detJ$ or stable focus if in addition $\left[ \alpha_d (I_Y - S_Y) + \beta_d (L_R - M_R)  \right]^2 < 4detJ$.
\end{proof}

Now, we consider new QY-ML model described by (\ref{dynamic_QY-ML}) with economic functions properties (\ref{economic_L}), (\ref{economic_M}), (\ref{economic_Q}), (\ref{economic_K_L_T_Y}) and (\ref{economic_K_L_T_R}). Using Implicit Function Theorem we see that properties (\ref{economic_L}) and (\ref{economic_M}) ensure increasing ML curve. If we furthermore assume 
\begin{equation}
\label{economic_Q_Y}
\frac{\partial Q}{\partial Y} < 1
\end{equation}  
in addition to the properties (\ref{economic_Q}), (\ref{economic_K_L_T_Y}) and (\ref{economic_K_L_T_R}) like analogy to the condition $0<I_Y<1$ and $0<S_Y<1$ on the goods market then the curve QY is decreasing. Analogously let $R_{QY}(Y)$ denote a function whose graph is the curve QY, and $R_{ML}(Y)(=R_{LM}(Y))$ denote the function whose graph is the curve ML. These functions exist because of Implicit Function Theorem. If we assume
\begin{equation}
\label{intersection_QY-ML}
\lim_{Y \rightarrow 0^+} R_{QY}(Y) > \lim_{Y \rightarrow 0^+} R_{ML}(Y),
\end{equation}
in addition to the conditions (\ref{economic_L}), (\ref{economic_M}), (\ref{economic_Q}), (\ref{economic_K_L_T_Y}), (\ref{economic_K_L_T_R}) and (\ref{economic_Q_Y}) then there exists one intersection point of the curve QY and ML.

\begin{prp}
The singular point of the QY-ML model (\ref{dynamic_QY-ML}) with economic functions properties (\ref{economic_L}), (\ref{economic_M}), (\ref{economic_Q}), (\ref{economic_K_L_T_Y}), (\ref{economic_K_L_T_R}), (\ref{economic_Q_Y}) and (\ref{intersection_QY-ML}) is unstable saddle point.
\end{prp}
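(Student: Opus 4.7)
The plan is to mimic the structure of the previous proposition's proof: compute the Jacobian $J$ of the system (\ref{dynamic_QY-ML}) at the unique singular point guaranteed by (\ref{intersection_QY-ML}), and then verify that $\det J < 0$, which for a planar system is equivalent to the singular point being a (necessarily unstable) saddle. The trace and discriminant play a secondary role here, since $\det J < 0$ alone forces the eigenvalues to be real and of opposite sign.

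First I would expand, via the chain rule, the partial derivatives of the composite production term $Q(\mathcal{K}(Y,R),\mathcal{L}(Y,R),\mathcal{T}(Y,R))$, setting $Q_Y := Q_{\mathcal{K}} \mathcal{K}_Y + Q_{\mathcal{L}} \mathcal{L}_Y + Q_{\mathcal{T}} \mathcal{T}_Y$ and $Q_R := Q_{\mathcal{K}} \mathcal{K}_R + Q_{\mathcal{L}} \mathcal{L}_R + Q_{\mathcal{T}} \mathcal{T}_R$. Properties (\ref{economic_Q}) combined with (\ref{economic_K_L_T_Y}) yield $Q_Y > 0$, and adding (\ref{economic_Q_Y}) gives $Q_Y - 1 < 0$; properties (\ref{economic_Q}) and (\ref{economic_K_L_T_R}) yield $Q_R < 0$. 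The Jacobian at the equilibrium is then
$$
J = \begin{pmatrix} \alpha_s(Q_Y - 1) & \alpha_s Q_R \\ \beta_s(M_Y - L_Y) & \beta_s(M_R - L_R) \end{pmatrix},
$$
so that $\det J = \alpha_s \beta_s \left[(Q_Y - 1)(M_R - L_R) - Q_R (M_Y - L_Y)\right]$, and the eigenvalues are given by the same quadratic formula as in the previous proposition with $\alpha_d,\beta_d,I_Y,S_Y,\ldots$ replaced by $\alpha_s,\beta_s,Q_Y,1,\ldots$.

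The sign check is the heart of the argument and splits into two pieces. By (\ref{economic_L}) and (\ref{economic_M}) we have $L_R < 0 < M_R$, hence $M_R - L_R > 0$; combined with $Q_Y - 1 < 0$ this makes the first summand strictly negative. By (\ref{economic_M}) we have $M_Y - L_Y < 0$, and since $Q_R < 0$ the product $Q_R(M_Y - L_Y) > 0$, so $-Q_R(M_Y - L_Y) < 0$ and the second summand is also strictly negative. Therefore $\det J < 0$, and the eigenvalues $\lambda_{1,2}$ are real with $\lambda_1 < 0 < \lambda_2$, so the singular point is an (inherently unstable) saddle. I do not expect any real obstacle beyond the chain-rule bookkeeping for the composite function $Q \circ (\mathcal{K},\mathcal{L},\mathcal{T})$; once the signs of $Q_Y$ and $Q_R$ are extracted from the hypotheses, the two-term sign analysis of $\det J$ is forced directly by the stated economic properties and no further assumption on the trace or on the relation between $\alpha_s$ and $\beta_s$ is needed.
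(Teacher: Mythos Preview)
Your proposal is correct and follows essentially the same approach as the paper's proof: compute the Jacobian of (\ref{dynamic_QY-ML}) at the singular point, write $\det J = \alpha_s\beta_s[(Q_Y-1)(M_R-L_R) - Q_R(M_Y-L_Y)]$, and conclude that $\det J<0$ forces a saddle. Your version is in fact more explicit than the paper's, which simply asserts that the listed economic conditions imply $\det J<0$; your chain-rule expansion of $Q_Y,Q_R$ and the term-by-term sign check spell out exactly what the paper leaves to the reader.
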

\begin{proof}
The eigenvalues of Jacobi's matrix $J$ of the system (\ref{dynamic_QY-ML}) in this singular point are \\
$\lambda_{1,2} = \frac{1}{2} \left[ \alpha_s (Q_Y - 1) + \beta_s (M_R - L_R) \pm \sqrt{\left[ \alpha_s (Q_Y - 1) + \beta_s (M_R - L_R)  \right]^2 - 4detJ} \right]$
where $det J = \alpha_s \beta_s \left[ (Q_Y - 1)(M_R-L_R) - Q_R (M_Y - L_Y) \right]$.
The conditions (\ref{economic_L}), (\ref{economic_M}), (\ref{economic_Q}), (\ref{economic_K_L_T_Y}), (\ref{economic_K_L_T_R}) and (\ref{economic_Q_Y}) imply negative determinant of Jacobi's matrix $J$ of the system (\ref{dynamic_QY-ML}) in this singular point. From this follows that this point is unstable saddle point.
\end{proof}

We can see curve IS and LM and stable focus as equilibrium point of IS-LM model displayed on Figure \ref{fig:curves_IS_LM_focus}. The curve QY and ML and unstable saddle as equilibrium point of QY-ML model are shown on Figure \ref{fig:curves_QY_ML_saddle}.
\begin{figure}[h]
\begin{minipage}[c]{225pt}
\hspace{0.2cm}
  \includegraphics[width=215pt]{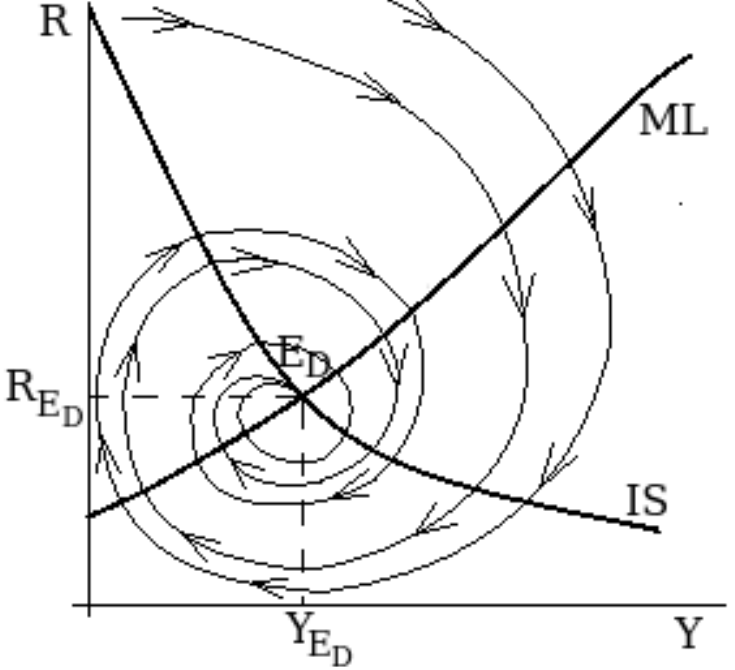}
\end{minipage}
\begin{minipage}[c]{225pt}
\hspace{0.2cm}
  \includegraphics[width=205pt]{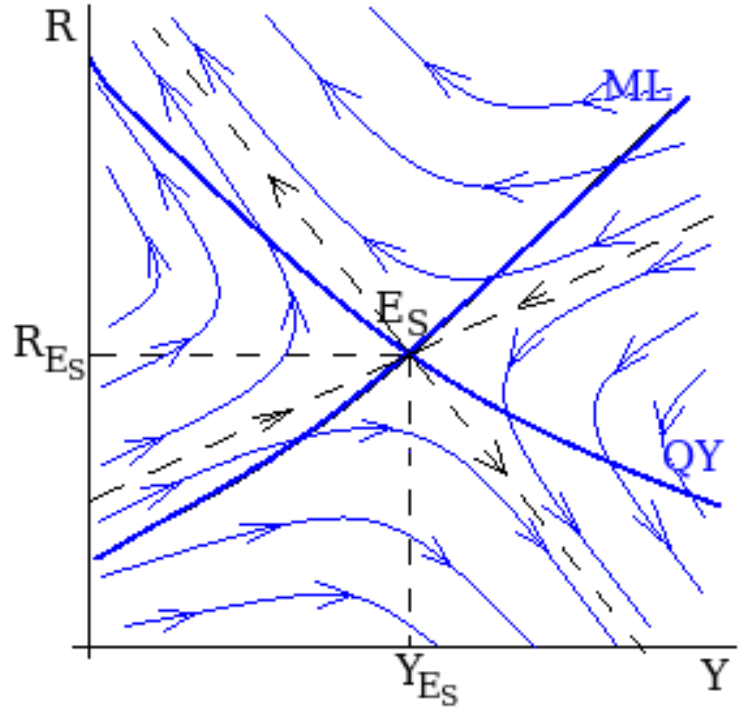}
\end{minipage} 
\begin{minipage}[c]{225pt}
  \caption{Typical case of IS-LM model}
  \label{fig:curves_IS_LM_focus}
\end{minipage}
\begin{minipage}[c]{225pt}
  \caption{Typical case of QY-ML model}
  \label{fig:curves_QY_ML_saddle}
\end{minipage}
\end{figure}

Now, if we consider the differential inclusion (\ref{model_IS-LM/QY-ML}) consisting of the IS-LM model (\ref{dynamic_IS-LM}) and of the QY-ML model (\ref{dynamic_QY-ML}) with economic functions properties (\ref{economic_I}), (\ref{economic_S}), (\ref{economic_L}), (\ref{economic_M}), (\ref{economic_Q}), (\ref{economic_K_L_T_Y}), (\ref{economic_K_L_T_R}), (\ref{economic_I_S}), (\ref{intersection_IS-LM}), (\ref{economic_Q_Y}) and (\ref{intersection_QY-ML}), then there chaotic sets in plane $\mathbb{R}^2$ (with variables $Y,R$) are admitted, see yellow areas on Figure \ref{fig:chaos_IS-LM_QY-ML}.

\begin{figure}[ht]
  \centering
  \includegraphics[height=7.5cm]{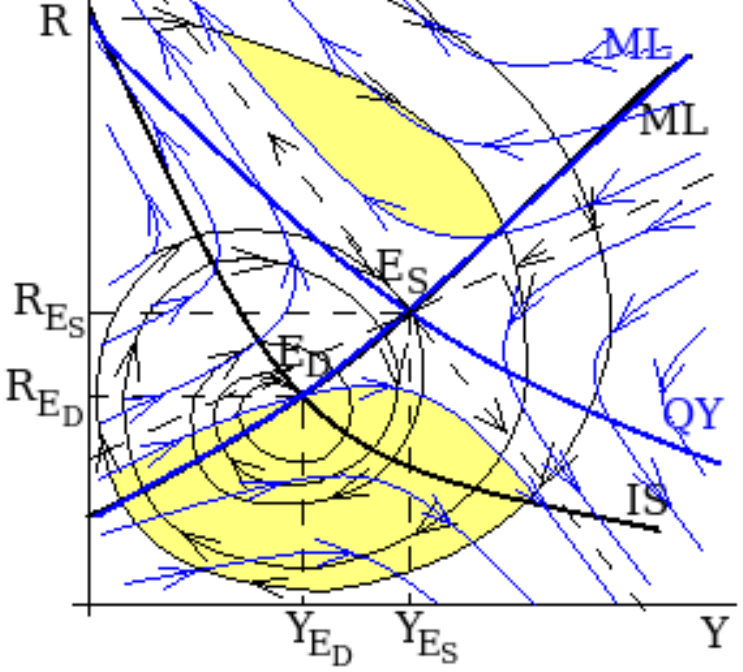}
  \caption{Chaotic sets in typical case of IS-LM/QY-ML model}
  \label{fig:chaos_IS-LM_QY-ML}
\end{figure}

As we present in the section \ref{chaotic_set_of_solutions} we need to have Devaney, Li-Yorke and distributional chaotic set of solutions for chaos existence in plane $\mathbb{R}^2$. We use the set of solutions $V^*=\{\gamma_0,\gamma_1\}$ from the proof of Theorem \ref{thm:chaotic_set_of_solutions}. It is necessary to provide some economic interpretation or economic explanation of such set of solutions $V^*$. 
\begin{itemize}
\item The solution $\gamma_0$ describes an economy in the regular economic cycle where the expansion phase has still the same duration and also the recession phase. Thus, this economic cycle has still the same period. In plane $\mathbb{R}^2$ describing by ordered pairs $[Y,R]$ the concrete changes of aggregate income $Y$ and interest rate $R$ during the expansion and recession phase are the same in every period.
\item The solutions $\gamma_i \in \gamma_1$ describes more irregular economic cycle. The period of whole (one) cycle is different in two consecutive periods. Every solution $\gamma_i$ explains every relevant change in the following period duration.    
\end{itemize}

For node or focus types of equilibrium points of the IS-LM and QY-ML model the trajectory corresponding to the IS-LM model passing through the equilibrium of the QY-ML model and the trajectory corresponding to the QY-ML model passing through the equilibrium of the IS-LM model is critical and very "strong". It is critical trajectory because we have to overcome the frontier given by this trajectory to get out of the chaotic area. It is "strong" trajectory because the market economy "inclines" to the equilibrium points, which are ideal states, and this trajectory ensures the path containing this equilibrium, and the economy "is essentially attracted to use" this path. For saddle type of equilibrium point the critical and "strong" trajectory of the opposite model is not passing through this saddle but the last (the
"farthest" from the saddle point) trajectory intersecting first the unstable manifold and then the stable manifold. The intersection of the stable and unstable manifold is exactly the saddle point and if we start (after switching) on the stable or unstable manifold and go along this model (with saddle type of equilibrium) we are on the path containing this equilibrium. In fact in typical case of IS-LM and QY-ML model presented above the one chaotic area is whole "quadrant" corresponding to the QY-ML model bounded only by the stable and unstable manifold (without this boundary) and containing the trajectories of IS-LM model intersecting first the unstable and then the stable manifold, see the Figure \ref{fig:chaos_IS-LM_QY-ML}. Here, the "critical" trajectory does not exist because the singular point type of the IS-LM model is focus. To get out of the chaotic area we must overstep exactly this stable or unstable manifold of QY-ML model. 

So, if the starting point of the economy (combination of concrete levels of aggregate income $Y$ and long-term real interest rate $R$) lies in the chaotic area (chaotic set $V$, yellow area on Figure \ref{fig:chaos_IS-LM_QY-ML}) and the economic cycle principle described above (Devaney, Li-Yorke, distributional chaotic set of solutions $V^*$, construction in the proof of Theorem \ref{thm:chaotic_set_of_solutions}) works, then there will exist Devaney, Li-Yorke and distributional chaos in economy. If the starting point of the economy is equilibrium point of the IS-LM model and  first the IS-LM model works, then for the recession phase the economy stays in this equilibrium but in the expansion phase goes along the QY-ML model. Then in such case it depends on concrete situation and concrete economic cycle and its periods whether chaos exists. And vice versa for starting equilibrium point of QY-ML model.

\begin{rmk}
The chaotic behaviour of the IS-LM and also QY-ML model depends on economic functions properties. We specified some standard economic properties in Definition \nolinebreak \ref{df:economic_properties_I_S_L_M} and \ref{df:economic_properties_Q_K_L_T}. Furthermore, we assumed $I_Y<S_Y$ in (\ref{economic_I_S}) for the IS-LM model. But there can be also another properties like for example conditions described by Kaldor in \cite{kaldor} which define three parts of investment and saving function course. In the first and third part $I_Y<S_Y$ and in the second part in the middle $I_Y>S_Y$. Such IS-LM model exists, see \cite{volna-kalicinska}. Similarly, we furthermore assumed $Q_Y<1$ in (\ref{economic_Q_Y}) for the QY-ML model. But also opposite condition $Q_Y>1$ can exist. Every such conditions lead to combination of the "classical" hyperbolic singular points lying in the different point in the IS-LM and QY-ML model. From this follows that there always exists Devaney, Li-Yorke and distributional chaos in macroeconomic situation described by IS-LM model in the recession and by QY-ML model in the expansion with previously presented principle of the economic cycle. 
\end{rmk}

\begin{rmk} 
In economy there can also arise some non-standard situations or behaviours of economic subjects. Then there can be different economic function properties than standard, e.g. the opposite properties of modelled functions. This can lead to the "classical" hyperbolic singular points in both models, also to some non-hyperbolic singular points or centres. The possibility of chaos existence in such cases with these different singular points seems to be similar - chaos is typically admitted.
\end{rmk}

\section*{Conclusion}
The first theoretical part of this paper examines the existence of chaos in plane $\mathbb{R}^2$ given by continuous dynamical system generated by Euler equation branching. There are few theorems, lemmas and remarks illustrating this existence of chaos. The second part is application in macroeconomics. Using newly constructed overall macroeconomic equilibrium model called IS-LM/QY-ML we show chaotic behaviour of the economy with relevant economic interpretation of chaos causes. 

In perspective of these days and of some economic situation not explicable by "classical" macroeconomic models the chaos seems to be a natural part of the economy behaviour. As we can see in this paper there exist the areas given by levels of the aggregate income (GDP) and of the long-term real interest rate where economy described by IS-LM/QY-ML model behaves chaotically under economic cycle principle described in this paper.

\section*{Discussion}
We researched the combinations of hyperbolic singular points of two branches of Euler equation branching not lying in the same point in plane $\mathbb{R}^2$ with non-zero determinant of Jacobi's matrix. Even though the situation in other cases (including also cycles/centres or singular points of both branches lying in the same point in plane $\mathbb{R}^2$) seems to be similar, the research dealing with this can be also interesting for the completion of the overview of all possible singular points combinations in plane $\mathbb{R}^2$.

Another interesting question lies in differences between economic interpretation of Devaney, Li-Yorke or distributional chaos. It can be also meaningful to give explanation whether there are some differences from the economic point of view or whether not.

\section*{Acknowledgements}
The research was supported by the Student Grant Competition of Silesian University in Opava, grant no. SGS/2/2013.


\begin{thebibliography}{99}
\bibitem{badarudin_ariff_khalid} BADARUDIN, Z. E.; ARIFF, M.; KHALID, A. M.: \textit{Exogenous or endogenous money supply: evidence from Australia}, The Singapore Economic Review, 2012, Volume 57, No. 4, 1250025 (12 pages).
\bibitem{baily_friedman} BAILY, M.; FRIEDMAN, P.: \textit{Macroeconomics, financial markets, and the international sector}, Irwin, 1991.
\bibitem{cesare_sportelli} CESARE L. DE; SPORTELLI M.: \textit{A Dynamic IS-LM Model with Delayed Taxation Revenues}, Chaos, Solitons \& Fractals, 2005, Volume 25, pp. 233-244.
\bibitem{chiba_leong} CHIBA, S.; LEONG, K.: \textit{Can the IS-LM Model Truly Explain Macroeconomic Phenomena?}, Journal of Young Investigators, 2007, Volume 17, Issue 3, September, http://www.jyi.org/research/re.php?id=1242.
\bibitem{gandolfo} GANDOLFO, G.: \textit{Economic Dynamics}, 3rd ed., Springer-Verlag, Berlin-Heidelberg, 1997.
\bibitem{hicks} HICKS, J. R.: \textit{Mr. Keynes and the Classics - A Suggested Interpretation}, Econometrica, 1937, volume 5, April, pp. 147-159.
\bibitem{kaldor} KALDOR, N.: \textit{A Model of the Trade Cycle}, Economic journal, 1940, Volume 50, March, pp. 78-92.
\bibitem{king} KING, R. G.: \textit{The New IS-LM Model: Language, Logic, and Limits}, Economic Quarterly, Federal Reserve Bank of Richmond, 2000, Volume  86/3, Summer, pp. 45-103.
\bibitem{puu} PUU, T.: \textit{Attractors, Bifurcations} \& \textit{Chaos - Nonlinear Phenomena in Economics}, 2nd ed., Springer-Verlag, Berlin-Heidelberg, 2003.
\bibitem{neri_venturi} NERI, U.; VENTURI, B.: \textit{Stability and bifurcations in IS-LM economic models}, International Review of Economics, 2007, Volume 54, Issue 1, pp. 53-65. 
\bibitem{romer} ROMER, D.: \textit{Keynesian Macroeconomics without the LM Curve}, National bureau of economic research, January 2000, working paper series.
\bibitem{sedlacek} SEDL\'{A}\v{C}EK, P.: \textit{Post-Keynesian theory of money - Alternative view}, Politick\'{a} ekonomie, 2002, Volume 50, Issue 2, pp. 281-292.  
\bibitem{smirnov} SMIRNOV, G. V.: \textit{Introduction to the Theory of Differential Inclusions}, vol. 41 of Graduate Studies in Mathematics, volume 41, American Mathematical Society, Providence, Rhode Island, 2002.
\bibitem{sojka} SOJKA, M.: \textit{Monetární politika evropské centrální banky a její teoretická východiska pohledem postkeynesovské ekonomie (Monetary Policy of the European Central Bank and Its Theoretical Resources in the View of Postkeynesian Economy)}, Politická ekonomie, 2010, Issue 1, pp. 3-19.
\bibitem{stockman_raines} STOCKMAN, D. R.; RAINES, B. R.: \textit{Chaotic sets and Euler equation branching}, Journal of Mathematical Economics, 2010, Volume 46, pp. 1173-1193.
\bibitem{volna-kalicinska} VOLN\'{A} KALI\v{C}INSK\'{A}, B.: \textit{Augmented IS–LM model based on particular functions}, Applied Mathematics and Computation, 2012, Volume 219, Issue 3, pp. 1244-1262.
\bibitem{wiggins} WIGGINS, S.: \textit{Introduction to Applied Nonlinear Dynamical Systems and Chaos}, 2nd ed., Springer-Verlag, New York, 2003.
\bibitem{zhang} ZHANG, W-B.: \textit{Differential Equations, Bifurcations, and Chaos in Economics}, World Scientific Publishing Co. Pte. Ltd., Singapore, 2005. 
\bibitem{zhou-li} ZHOU, L.; LI, Y.: \textit{A Dynamic IS-LM Business Cycle Model with Two Time Delay in Capital Accumulation Equation}, Journal of Computational and Applied Mathematics, 2009, Volume 228, Issue 1, June, pp. 182-187.
\end{thebibliography}
\end{document}